\def\tank#1{\protected@xdef\@thanks{\@thanks
		\protect\footnotetext[0]{#1}}}
\def\bigfoot{
	
	\@footnotetext}
\renewcommand{\theequation}{\arabic{section}.\arabic{equation}}
\newcommand{\ea}{\end{array}}
\newtheorem{theorem}{Theorem}[section]
\newtheorem{lemma}{Lemma}[section]
\newtheorem{definition}{Definition}[section]
\newtheorem{remark}{Remark}[section]
\numberwithin{equation}{section}
\newenvironment{proof}[1][\normalfont\bfseries Proof]{
	\noindent #1.\hspace{0.5em}
	\ignorespaces
}{%
	\hfill $\square$
	\par
}
\renewcommand{\theequation}{\arabic{section}.\arabic{equation}}
\begin{document}
\title{{\Large \bf Averaging principle for slow--fast systems of PDEs with rough drivers }}
\author{{Miaomiao Li$^{a}$},~~{Bin Pei$^{a,b,c}$}\footnote{Corresponding author: binpei@nwpu.edu.cn},~~{Yong Xu$^{a,c}$},~~{Xiaole Yue$^{a,c}$}
	\\
	\scriptsize $a.$ School of Mathematics and Statistics, Northwestern Polytechnical University, Xi'an 710072, China \\
    \scriptsize $b.$ Research and Development Institute of Northwestern Polytechnical University in Shenzhen, Shenzhen 518057, China \\
    \scriptsize $c.$ MOE Key Laboratory of Complexity Science in Aerospace, Northwestern Polytechnical University, Xi'an 710072, China }

\date{}
\maketitle
\begin{center}
	\begin{minipage}{145mm}
		
		{\bf Abstract.} This paper investigates a class of slow--fast systems of rough partial differential equations defined over a monotone family of interpolation Hilbert spaces. By employing the controlled rough path framework tailored to a monotone family of interpolation spaces, together with a time discretization argument, we demonstrate that the slow component strongly converges to the solution of the averaged system in the supremum norm as the time-scale parameter $\varepsilon$ tends to $0$.

\vspace{3mm} {\bf Keywords:} Averaging principle; rough partial differential equations; slow--fast system

	\end{minipage}
\end{center}

\tableofcontents

\section{Introduction}

Lyons' seminal rough path theory \cite{Ly98} gives meaning and well-posedness of rough differential equations (RDEs) of the form
\begin{equation}\label{troeq1}
\mathrm{d} Y_t=f(Y_t)\mathrm{d} \mathbf{X}_t,
\end{equation}
where $\mathbf X=(X, \mathbb X)$ is a two-step $\alpha$-H\"{o}lder rough path with $\alpha \in (1/3,1/2]$, and $f$ is a sufficiently regular vector field.
The central idea of the theory is to extend integration to driving signals $X \in \mathcal C^\alpha$ with low regularity $\alpha <1/2$, for which the classical Young integral is no longer well-defined.
To overcome this, one augments $X$ with additional information $\mathbb X_{s,t}$ for $t>s$ that postulates the values of the (ill-defined) iterated integrals $\int_s^t X_{s,r} \otimes \mathrm{d}X_r$ (where $X_{s,r}=X_r-X_s$) and that satisfies the bound $|\mathbb X_{s,t}| \lesssim |t-s|^{2\alpha}$ consistent with the regularity of $X$,
as well as the algebraic identity $\mathbb X_{s,t}-\mathbb X_{s,u}-\mathbb X_{u,t}=X_{s,u} \otimes X_{u,t}$.
Since its inception, rough path theory has become a powerful tool with which one can solve stochastic differential equations (SDEs) driven by irregular signals, such as SDEs by fractional Brownian motion with Hurst parameter $H<1/2$ (see e.g. \cite{CQ02}).
In particular, when $X = W$ is a Brownian motion, it can be enhanced to a rough path by incorporating its iterated integrals defined via It\^{o} integration.
Thus, rough path theory provides a novel pathwise approach for interpreting It\^{o}'s stochastic calculus.
One of the main advantages of this approach is that it allows to show that the solution depends continuously on the noise and the initial condition. This is in strong contrast to the approach using It\^{o} calculus, where in general only measurability of the solution map is available.
For a more comprehensive overview of rough path theory, we refer the reader to the monograph by Friz and Hairer \cite{FH20}. 
Further studies on RDEs can be found in \cite{CF25,DG24,FHL24,IXY24,YXP25} and the references therein.

As already conjectured in Lyons' seminal work \cite{Ly98}, rough path theory also holds great potential for the analysis of partial differential equations (PDEs) driven by irregular signals.
In recent years, various approaches have been developed to analyze rough PDEs.
One of the important approaches is the semigroup theory, which investigates mild solutions to rough PDEs.
Mild solutions of semilinear rough PDEs were first established by Gubinelli and Tindel \cite{GT10} in the case of
polynomial nonlinearities.
This was followed by significant progress on the local existence and uniqueness of mild solutions to general semilinear rough PDEs, notably through the works of Gerasimovi\v{c}s and Hairer \cite{GH19}, Hesse and Neam\c{t}u \cite{HN19}, and Gerasimovi\v{c}s et al. \cite{GHN21}.
Furthermore, the existence of global mild solutions was established by Deya et al. \cite{DGT12} and Hesse and Neam\c{t}u \cite{HN20,HN22}.
For alternative approaches to the study of rough PDEs, we refer the reader to the stochastic viscosity theory \cite{CaF09,CFO11,FGLS17} and the Feynman-Kac approach \cite{DOR15}, among others.

In this paper, we investigate a slow--fast system consisting of rough PDEs within the framework of semigroup theory.
In particular, we consider the following system:
\begin{equation}\label{troeq2}
	\left\{ \begin{aligned}
		&\mathrm{d} X_t^\varepsilon=\big[L X_t^\varepsilon+F_1 (X_t^\varepsilon,Y_t^\varepsilon)\big]\mathrm{d} t+G_1 (X_t^\varepsilon)\mathrm{d} \mathbf B_t,\\
		&\mathrm{d} Y_t^\varepsilon=\frac 1\varepsilon \big[L Y_t^\varepsilon+F_2 (X_t^\varepsilon,
		Y_t^\varepsilon)\big]\mathrm{d} t+\frac 1{\sqrt{\varepsilon}}G_2 (X_t^\varepsilon,
		Y_t^\varepsilon)  \mathrm{d} \mathbf W^{\text{It\^{o}} }_t,\\
		&X_0^\varepsilon=x, ~~ Y_0^\varepsilon=y,
	\end{aligned}  \right.
\end{equation}
where $\varepsilon >0$ is a small parameter characterizing the separation of time scales between the slow component $X_t^\varepsilon$ and the fast component $Y_t^\varepsilon$;
$\mathbf B=(B, \mathbb B)$ is a $d_1$-dimensional random $\alpha$-H\"{o}lder rough path defined on a filtered probability space $(\Omega, \mathcal{F}, \{\mathcal{F}_t\}_{t \in [0, T]},\mathbb{P})$, for some $\alpha \in (1/3,1/2)$, and the process $(B_{0,t}, \mathbb B_{0,t})_{t \in [0,T]}$ is adapted to the filtration $\{\mathcal{F}_t\}_{t \in [0, T]}$;
$\mathbf W^{\text{It\^{o}} }=(W,\mathbb W^{\text{It\^{o}} })$ is a $d_2$-dimensional It\^{o}-type Brownian rough path (see Definition \ref{Xi} below), and it is assumed that $W$ and $\mathbf B$ are independent;
the operator $L$ is negative definite and self-adjoint;
and the coefficients $F_1,F_2,G_1, G_2$ satisfy certain assumptions, which will be detailed in Section \ref{AP} below.
Slow--fast systems arise naturally in many real-world dynamical systems, such as climate dynamics, material science, biology and chemical kinetics (cf. \cite{EE03,Ki01} and more references therein).
In fact, as mentioned in \cite{SA61}, almost all physical systems have a certain hierarchy in which different components evolve on distinct time scales, that is, some components vary rapidly while others change very slowly.
However, the direct analysis or simulation of such systems is often challenging due to the presence of widely separated time scales and the cross interactions between the slow and fast components.
The averaging principle is a powerful analytical tool for studying slow--fast systems and addressing the challenges they present.
Its key idea is to construct an effective approximation of the slow component by averaging its parameters with respect to the fast variables.
This effective approximation system no longer depends on the fast variables and is therefore more suitable for analysis and simulation.

There have been fruitful results in the study of averaging principle of slow--fast systems.
The first rigorous result of averaging principle was presented by Bogoliubov and Mitropolsky \cite{BM61} in the case of ordinary differential equations.
The further development of the theory in the case of SDEs was established by Khasminskii \cite{K68}.
Since then, there has been a growing body of research devoted to the investigation of the averaging principle of various types of slow--fast systems.
For slow--fast systems driven by Brownian motion, results on the averaging principle for SDEs can be found in \cite{GKK06,GL90,GR16,LD10,LRSX20} and the references therein, see also e.g. \cite{C09,CF09,CL17,CZ25,HLL22,LRSX23,SXX21} for related advances concerning stochastic PDEs.
For slow--fast systems driven by fractional Brownian motion with Hurst parameter $H > 1/2$,
some notable works include \cite{HL20,HXPW22,LGQ25,PIX23,PIX20,SXW22}, where, due to the requirement of ergodicity, the slow component is driven by fractional Brownian motion while the fast component is driven by Brownian motion.
Notably, Li and Sieber \cite{LS22} made the first attempt to establish the averaging principle for slow--fast SDEs in which the fast component is also driven by fractional Brownian motion, employing a geometric ergodicity theorem to address the challenges arising from the non-Markovian structure.
This result was later extended to stochastic PDEs by Pei et al. \cite{PSX24}.
These developments naturally raise the question of whether the averaging principle still holds for slow--fast systems driven by fractional Brownian motion with Hurst parameter $H<1/2$, or more generally, by irregular (non-semimartingale) signals.
From a mathematical standpoint, this presents significant challenges due to the increased roughness and the lack of both the semimartingale and Markov properties.

To this end, this work aims to investigate the averaging principle for slow--fast systems driven by irregular signals.
To the best of the authors' knowledge, such systems have received limited attention in the existing literature.
Pei et al. \cite{PIX21} established the averaging principle for slow-fast RDEs via a fractional calculus approach, where the fast component is driven by Brownian motion and the slow component by fractional Brownian motion with Hurst parameter $H \in (1/3, 1/2]$.
Later, Inahama \cite{In22} extended the result to a broader setting in which the slow component is driven by a general random $\alpha$-H\"{o}lder rough path with $\alpha \in (1/3, 1/2)$, employing the theory of controlled rough paths.
Furthermore, in \cite{In25}, he treated the more singular regime $\alpha \in (1/4, 1/3)$.
In a related development, Pei et al. \cite{PHSX23} considered a slow--fast system where both the slow and fast components are driven by fractional Brownian motion with Hurst parameter $H \in (1/3, 1/2]$, and established an almost sure version of the averaging principle.
Very recently, Li et al. \cite{LLPX25} derived the averaging principle for a class of semilinear slow--fast rough PDEs.

However, we would like to emphasize that, although \cite{LLPX25} established the validity of the averaging principle for system (\ref{troeq2}), the convergence of the slow component to the solution of the averaged equation depends on the semigroup $S$ generated by the operator $L$.
This dependency arises from the formulation of controlled rough paths in \cite{LLPX25}, which employs the reduced increment operator $\hat{\delta}$, defined by $\hat{\delta} f_{s,t} = f_t - S_{t-s}f_s$.
Consequently, the underlying function spaces explicitly depend on the semigroup $S$.
More precisely, \cite{LLPX25} established the following convergence result:
\begin{equation*}
\lim_{\varepsilon\rightarrow 0}  \mathbb E\left[\sup_{0\leq s<t\leq T} \frac {|X_t^\varepsilon-\bar{X}_t-S_{t-s}(X_s^\varepsilon-\bar{X}_s)|_{\mathcal H}}{|t-s|^\eta}\right]=0,~~~~\text{for} ~\eta \in (\alpha-1/4,\alpha),
\end{equation*}
where $\bar{X}$ denotes the solution of the corresponding averaged equation (see Eq. (\ref{aveq}) below).

In the present work, our primary objective is to examine whether the averaging principle for system (\ref{troeq2}) remains valid when the dependence on the semigroup $S$ is entirely removed.
To this end, we introduce a notion of controlled paths inspired by \cite{GHN21}, defined over a monotone family of interpolation Hilbert spaces.
In contrast to the framework employed in \cite{GH19,GT10}, our formulation is independent of the semigroup $S$ and does not involve the reduced increment operator $\hat{\delta}$.
Within this semigroup-free controlled rough path framework, we demonstrate that the slow component converges to the solution of the averaged equation in the supremum norm. Specifically, under appropriate assumptions and using a time discretization argument, we rigorously derive the strong convergence result:
\begin{equation*}
\lim_{\varepsilon\rightarrow 0}  \mathbb E\left[\sup_{t\in [0,T]} |X_t^\varepsilon-\bar{X}_t|_{\mathcal H_{\gamma} }\right]=0, ~~~~\text{for}~ \gamma \in  \mathbf R.
\end{equation*}
It should be noted that, compared to \cite{LLPX25}, we allow the nonlinear coefficients in Eq. (\ref{troeq2}) to lose some spatial regularity.

The paper is organized as follows: In Section \ref{sect2}, we introduce our settings, some preliminary notations, and state the main result of this paper. Section \ref{sect3} provides a rigorous formulation of the slow--fast system of rough PDEs from both deterministic and probabilistic viewpoints.
In Section \ref{sect4}, we present the detailed proof of the main result.

\section{Preliminaries and main result}\label{sect2}

Throughout the paper, we set $\alpha_0 \in (1/3,1/2]$ and fix a finite time horizon $T > 0$.
We write $\mathbf N:=\{1,2,3,\cdots\}$ and write $\mathbf N_0:=\mathbf N \cup \{0\}$.
The set of real numbers is denoted by $\mathbf R$.
For some Banach spaces $V$ and $\bar{V}$, let $\mathcal{L} (V; \bar{V})$ represent the space of all bounded linear operators from $V$ to $\bar{V}$ endowed with the norm $|\cdot|_{\mathcal{L} (V; \bar{V})}$.
For simplicity we write $\mathcal{L}(V):= \mathcal{L} (V; V )$.
For some $k\in\mathbf{N}_{0}$ we define the space $\mathcal{C}^{k}_{b}$ as the space of bounded $\mathcal C^k$ functions with bounded derivatives of all orders up to $k$.
We use the notation $a \lesssim b$ to indicate that $a\leq C b $ for some multiplicative positive constant $C$.
In the sequel, the notation $C$ with or without subscripts will represent a positive constant, whose value may vary from one place to another,
and $C$ with subscripts will be used to emphasize that it depends on certain parameters.

\subsection{Settings}

Throughout this paper, we work on a monotone family of interpolation Hilbert spaces.
To this end, we first introduce the concept of a monotone family of interpolation Hilbert spaces.

\begin{definition}
A family of separable Hilbert spaces $(\mathcal H_\gamma, |\cdot |_{\mathcal H_\gamma})_{\gamma \in \mathbf R}$ is called a monotone family of interpolation spaces if the following conditions are satisfied:

(i) For every $\gamma_1\leq\gamma_2$,  $\mathcal{H}_{\gamma_2}$ is a continuously embedded, dense
subspace of $\mathcal{H}_{\gamma_1}$, i.e. $\mathcal{H}_{\gamma_2} \hookrightarrow  \mathcal{H}_{\gamma_1}$ densely;

(ii) The following interpolation inequality holds:
\begin{equation}\label{interp}
	|x|_{\mathcal H_{\gamma_2}}^{\gamma_3-\gamma_1} \lesssim |x|_{\mathcal H_{\gamma_1}}^{\gamma_3-\gamma_2} |x|_{\mathcal H_{\gamma_3}}^{\gamma_2-\gamma_1},
\end{equation}
for all $\gamma_1 \leq \gamma_2 \leq \gamma_3$ and all $x \in \mathcal H_{\gamma_3}$.
In particular, if $\gamma=0$ we simply write $\mathcal H=\mathcal H_0$,
the inner product and the norm on $\mathcal H$ are denoted by $\langle \cdot, \cdot \rangle_{\mathcal H}$ and $| \cdot |_{\mathcal H}$, respectively.
\end{definition}

Let $L $ be a negative definite self-adjoint operator acting on a monotone family of interpolation Hilbert spaces $(\mathcal{H}_{\gamma})_{\gamma \in \mathbf R}$ and assume that $(S_{t})_{t \geq 0}$ is the semigroup generated by $L$.
The main advantage of considering a monotone family as above is that the semigroup $(S_{t})_{t \geq 0}$ can be viewed as a linear mapping between these interpolation spaces,
and the following property is obtained (see (2.2) in \cite{GHN21}).
If $S: [0,T] \to \mathcal{L}(\mathcal{H}_{\gamma})\cap \mathcal{L}(\mathcal{H}_{\gamma+1})$ is such that for each $x\in\mathcal{H}_{\gamma+1}$ and any $t \in [0,T]$, $|(S_{t }-\operatorname{id})x|_{\mathcal H_\gamma} \lesssim t |x|_{\mathcal H_{\gamma+1}}$
while $|S_{t}x|_{\mathcal H_{\gamma+1}} \lesssim t^{-1}|x|_{\mathcal H_\gamma}$,
then for every $\sigma\in[0,1]$ we have that $S_{t} \in \mathcal{L}(\mathcal{H}_{\gamma+\sigma})$ and the following estimates hold true:
\begin{align}
	|S_{t} x |_{\mathcal H_{\gamma+\sigma}} \lesssim & t^{-\sigma} |x|_{\mathcal H_\gamma}, \label{group1} \\
	|(S_{t}-\operatorname{id} ) x |_{\mathcal H_\gamma} \lesssim & t^\sigma |x|_{\mathcal H_{\gamma+\sigma}}, \label{group2}
\end{align}
where $\operatorname{id}$ is the identity operator.

\subsection{Rough path theory}

For any $\gamma \in \mathbf{R}$, given a function $f:[0,T] \to \mathcal{H}_\gamma$, we set $f_{s,t}=f_t-f_s$.
We denote by $\mathcal C([0,T]; \mathcal H_\gamma)$ the space of continuous functions from $[0,T]$ into $\mathcal{H}_\gamma$, endowed with the supremum norm $$\|f\|_{\infty,\mathcal H_\gamma}= \sup_{t\in [0,T]} |f_t|_{\mathcal H_\gamma}.$$
For $\alpha \in (0, 1]$, we define the space of $\alpha$-H\"{o}lder continuous functions by
\begin{equation}\label{onepath}
\mathcal{C}^\alpha ([0,T]; \mathcal H_\gamma) =\left\{f:[0,T] \to \mathcal{H}_\gamma : |f|_{\alpha,\mathcal H_\gamma} := \sup_{0\leq s<t\leq T} \frac{|f_{s,t}|_{\mathcal H_\gamma}}{|t-s|^{\alpha}}<\infty \right\}.
\end{equation}
We write $\Delta_{[0,T]} = \{(s, t) \in [0,T] \times [0,T]: s \leq t\}$.
For a two-parameter function $F:\Delta_{[0,T]} \to \mathcal H_\gamma$, we similarly write
\begin{equation}\label{twopara}
|F|_{\alpha,\mathcal H_\gamma}:=\sup_{0\leq s<t\leq T} \frac {|F_{s,t}|_{\mathcal H_\gamma}} {|t-s|^\alpha},
\end{equation}
and we will denote the space of $\alpha$-H\"{o}lder continuous functions on $\Delta_{[0,T]}$ by
$$\mathcal{C}^\alpha(\Delta_{[0,T]}; \mathcal H_\gamma)=\left\{ F:\Delta_{[0,T]} \to \mathcal H_\gamma : |F|_{\alpha,\mathcal H_\gamma} <\infty \right\}.$$

\begin{remark}
To avoid confusion, we stress that if $f$ is a path then $f_{s,t}$ means the increment $f_t-f_s$, but if $F$ is a two-parameter function defined on $\Delta_{[0,T]}$ then $F_{s,t}$ just means $F$ evaluated at the pair of times $(s,t) \in \Delta_{[0,T]}$.
\end{remark}

Next,  we recall the definition of a two-step rough path.

\begin{definition}\label{RP}
	(\textit{Rough path}).
	Let $\alpha \in (1/3, 1/2]$.
	We define the space of rough paths $\mathscr{C}^{\alpha}([0,T];\mathbf{R}^d)$ to consist of the pairs $(X,\mathbb{X}) =: \mathbf{X}$
	such that $X \in \mathcal{C}^\alpha ([0,T];\mathbf{R}^d)$, $\mathbb{X} \in \mathcal{C}^{2\alpha}(\Delta_{[0,T]};\mathbf{R}^{d \times d})$ satisfy the Chen's relation:	
\begin{equation*}
 \mathbb{X}_{s,t}^{i,j} - \mathbb{X}_{u,t}^{i,j} - \mathbb{X}_{s,u}^{i,j} = X_{u,t}^{i}  X_{s,u}^{j},
\end{equation*}
	for every $0\leq s \leq u \leq t \leq T$ and $1 \leq i,j \leq d $.
	The rough paths space $\mathscr{C}^{\alpha}$ is equipped with the pseudometric
\begin{equation*}
	\varrho_{\alpha}(\mathbf{X},\bar{\mathbf{X}}) = |X - \bar{X}|_{\alpha} + |\mathbb{X} - \bar{\mathbb{X}}|_{2\alpha},	
\end{equation*}
where the quantities $|X|_{\alpha}$ and $|\mathbb{X}|_{2\alpha}$ are the H\"{o}lder seminorms defined in $(\ref{onepath})$ and $(\ref{twopara})$ respectively.
For simplicity, we shall write $\varrho_{\alpha}(\mathbf{X}) := \varrho_{\alpha}(0,\mathbf{X})$ in the sequel.
\end{definition}

\begin{definition}
Let $\mathcal{H}_\gamma^d :=\mathcal L(\mathbf{R}^d; \mathcal{H}_{\gamma})$ be the space of continuous linear operators from $\mathbf{R}^d$ to $\mathcal{H}_{\gamma}$, endowed with the operator norm $|\cdot|_{\mathcal{H}_{\gamma}^{d}}:=|\cdot|_{\mathcal L(\mathbf{R}^d; \mathcal{H}_{\gamma})}$.
Similarly, we write $\mathcal{H}_\gamma^{d \times d}:=\mathcal L(\mathbf{R}^{d \times d}; \mathcal{H}_{\gamma})$.
\end{definition}

\begin{definition}\label{CRP}
(\textit{Controlled path according to a monotone family}).
Assume that $(\mathcal H_\gamma)_{\gamma \in \mathbf R}$ be a monotone family of interpolation Hilbert spaces.
Let $\mathbf X=(X,\mathbb{X}) \in \mathscr{C}^\alpha ([0,T]; \mathbf{R}^d)$ for some $\alpha \in (1/3,1/2]$.
The pair $(Y, Y^{\prime})$ is called controlled rough path on the family $(\mathcal H_\gamma)_{\gamma \in \mathbf R}$ if

(i) $Y\in \mathcal{C} ([0,T];\mathcal{H}_\gamma), Y^{\prime} \in \mathcal{C}([0,T]; \mathcal{H}_{\gamma-\alpha}^d) \cap \mathcal{C}^\alpha ([0,T]; \mathcal{H}_{\gamma-2\alpha}^d) $;

(ii) The remainder
	\begin{equation}\label{remainder}
		R_{s,t}^Y = Y_{s,t}-Y_s^{\prime}  X_{s,t}
	\end{equation}
belongs to $\mathcal{C}^\alpha (\Delta_{[0,T]};\mathcal{H}_{\gamma-\alpha}) \cap \mathcal{C}^{2\alpha} (\Delta_{[0,T]};\mathcal{H}_{\gamma-2\alpha})$,
where $Y^{\prime}$ is the Gubinelli derivative of $Y$.
\end{definition}

The space of all such controlled rough paths will be denoted by $\mathcal{D}_{X}^{2\alpha}([0,T];\mathcal{H}_{\gamma})$ or simply written $\mathcal{D}_{X,\gamma}^{2\alpha}$.
We endow the space $\mathcal{D}_{X,\gamma}^{2\alpha}$ with the following norm (and sometimes we will use $\|\cdot, \cdot\|_{\mathcal{D}_{X,\gamma}^{2\alpha}([0,T])}$ to emphasize the time horizon):
\begin{equation}\label{norm}
	\|Y,Y^{\prime}\|_{ \mathcal{D}_{X,\gamma}^{2\alpha} }=
	\|Y\|_{\infty, \mathcal H_\gamma}
	+ \|Y^{\prime}\|_{\infty, \mathcal H_{\gamma-\alpha}^d}
	+|Y^{\prime}|_{\alpha, \mathcal H_{\gamma-2\alpha}^d}
	+|R^Y|_{\alpha, \mathcal H_{\gamma-\alpha}}
	+|R^Y|_{2\alpha,\mathcal H_{\gamma-2\alpha}} .
\end{equation}

\begin{remark}
(i) We do not define H\"{o}lder seminorm of $Y$ as a part of the definition of a controlled rough path,
since by using (\ref{remainder}) one can immediately obtain the following:
\begin{equation}\label{Y}
|Y|_{ \alpha , \mathcal H_{\gamma-\theta} } \leq \| Y^{\prime} \|_{\infty, \mathcal H_{\gamma-\theta}^d } |X|_{\alpha} + |R^Y|_{\alpha, \mathcal H_{\gamma-\theta} }, ~~~~ \theta \in \{\alpha,2\alpha\}.
\end{equation}

(ii) When the time interval $[0, T]$ is replaced by a subinterval $[s, t]\subset [0,T]$, all relevant (semi)norms and notations will be written with explicit dependence on the time interval. For instance, we write $\|Y\|_{\infty, \mathcal H_\gamma,[s,t]}$, $|R^Y|_{\alpha,\mathcal H_{\gamma-\alpha},[s,t]}$, and so on.
\end{remark}

We next introduce the notion of a rough integral.

\begin{lemma}\label{RoughInt}
(\cite[Theorem 4.5]{GHN21}).
Let $\gamma \in \mathbf{R}$ and let $\mathbf{X}=(X,\mathbb{X}) \in \mathscr{C}^{\alpha}([0,T];\mathbf{R}^{d})$ for some $\alpha \in (1/3, 1/2]$.
Let $(Y,Y^{\prime}) \in \mathcal{D}_{X}^{2\alpha}([0,T];\mathcal{H}_{\gamma}^d)$.
Then the rough integral defined by
\begin{equation}\label{int}
\int_{0}^{t} S_{t-s} Y_{s} \mathrm{d}  \mathbf{X}_{s}:=\lim_{| \pi|\to 0}\sum_{[u,v]\in \pi} S_{t-u}(Y_{u} X_{u,v}+Y_{u}^{\prime}\mathbb{X}_{u,v}),
\end{equation}
exists in $\mathcal H_{\gamma}$, where the limit is taken along any partitions $\pi$ of $[0,t]$ and is independent of the specific choice of these partitions.

Moreover, for every $0\leq\beta<3\alpha$ the above integral satisfies the following bound:
\begin{align} \label{IntBound}
		& \left|\int_{s}^{t}S_{t-u}Y_{u} \mathrm{d} \mathbf X_{u}-S_{t-s}Y_{s}  X_{s,t}-S_{t-s}Y_{s}^{\prime} \mathbb X_{s,t} \right|_{\mathcal H_{\gamma-2\alpha+\beta} } \nonumber\\
		\lesssim & \varrho_{\alpha}(\mathbf{X}) \|Y, Y^{\prime}\|_{ \mathcal{D}_{X,\gamma}^{2\alpha} } |t-s|^{3\alpha-\beta},
\end{align}
for all $(s,t)\in\Delta_{[0,T]}$. Here and in the following, for notational simplicity, the norm of a controlled rough path $(Y,Y^{\prime}) \in \mathcal{D}_{X}^{2\alpha}([0,T];\mathcal{H}_{\gamma}^{d})$ is still denoted by $\|Y,Y^{\prime}\|_{\mathcal{D}_{X,\gamma}^{2\alpha} }$.
\end{lemma}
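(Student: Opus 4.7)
My approach is to apply a semigroup-adapted sewing lemma to the germ
\[
\Xi^{t}_{u,v} := S_{t-u}\bigl(Y_u X_{u,v} + Y_u'\,\mathbb X_{u,v}\bigr),\qquad 0\le u\le v\le t,
\]
whose Riemann sums over partitions of $[0,t]$ are precisely the approximating sums on the right-hand side of \eqref{int}. Both the existence of the limit and the bound \eqref{IntBound} will follow from a super-additive control of the three-parameter defect $\delta\Xi^{t}_{u,w,v}:=\Xi^{t}_{u,v}-\Xi^{t}_{u,w}-\Xi^{t}_{w,v}$ in the target space $\mathcal H_{\gamma-2\alpha+\beta}$.

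The first step is to expand this defect using Chen's relation $\mathbb X_{u,v}-\mathbb X_{u,w}=\mathbb X_{w,v}+X_{u,w}\otimes X_{w,v}$ together with the controlled decomposition $Y_w-Y_u-Y_u'X_{u,w}=R^Y_{u,w}$. A direct algebraic rearrangement yields
\begin{align*}
\delta\Xi^{t}_{u,w,v}
&=(S_{t-u}-S_{t-w})\bigl(Y_w X_{w,v}+Y_w'\,\mathbb X_{w,v}\bigr)\\
&\quad -S_{t-u}R^Y_{u,w}X_{w,v}-S_{t-u}(Y^{\prime}_w-Y^{\prime}_u)\mathbb X_{w,v}.
\end{align*}
The second step is to bound each of the four pieces in $\mathcal H_{\gamma-2\alpha+\beta}$. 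For the two semigroup-difference contributions I write $S_{t-u}-S_{t-w}=S_{t-w}(S_{w-u}-\operatorname{id})$ and apply \eqref{group2} to trade the regularity drops $2\alpha-\beta$ and $\alpha-\beta$ for factors $(w-u)^{2\alpha-\beta}$ and $(w-u)^{\alpha-\beta}$ respectively, then multiply by the H\"older bounds on $X$ and $\mathbb X$ coming from $\varrho_\alpha(\mathbf X)$. For the two remaining terms I apply \eqref{group1} to $S_{t-u}$ with exponent $\beta$ to lift from the natural space $\mathcal H_{\gamma-2\alpha}$ into $\mathcal H_{\gamma-2\alpha+\beta}$, at the cost of a weight $(t-u)^{-\beta}\le (v-u)^{-\beta}$ that is absorbed by the $(v-u)^{3\alpha}$ gained from the controlled-path seminorms $|R^Y|_{2\alpha,\mathcal H_{\gamma-2\alpha}}$ and $|Y'|_{\alpha,\mathcal H_{\gamma-2\alpha}^d}$ appearing in \eqref{norm}. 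Summing the four estimates yields
\[
|\delta\Xi^{t}_{u,w,v}|_{\mathcal H_{\gamma-2\alpha+\beta}}\lesssim \varrho_\alpha(\mathbf X)\,\|Y,Y'\|_{\mathcal D^{2\alpha}_{X,\gamma}}\,(v-u)^{3\alpha-\beta},
\]
uniformly in $t$.

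With this super-additive bound in hand, the mild sewing lemma produces a unique limit in $\mathcal H_{\gamma-2\alpha+\beta}$, shows its independence of the partition, and delivers \eqref{IntBound} in the regime $3\alpha-\beta>1$ by specialising to the endpoints $(s,t)$; the remaining range of $\beta$ is then covered by interpolating via \eqref{interp} against a direct a priori bound on $\Xi^{t}_{s,t}$ itself in the coarser space $\mathcal H_{\gamma-2\alpha}$.

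\textbf{Main obstacle.} The delicate point is the joint bookkeeping of time- and space-regularity through \eqref{group1}--\eqref{group2}: each of the four defect terms demands its own distribution of semigroup losses and H\"older gains, and in particular the singular weight $(t-u)^{-\beta}$ produced by \eqref{group1} has to be absorbed uniformly in the sewing procedure so that the final estimate \eqref{IntBound} is free of singular $t$-dependent factors.
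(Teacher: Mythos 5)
Your overall skeleton is the right one and is essentially the argument behind the cited result (the paper itself does not prove this lemma; it invokes \cite[Theorem 4.5]{GHN21}): sew the germ $\Xi^t_{u,v}=S_{t-u}(Y_uX_{u,v}+Y_u'\mathbb X_{u,v})$, expand the defect via Chen's relation and $R^Y_{u,w}=Y_{u,w}-Y_u'X_{u,w}$ (your four-term decomposition is correct), and estimate each piece with \eqref{group1}--\eqref{group2}. The defect bound $|\delta\Xi^t_{u,w,v}|_{\mathcal H_{\gamma-2\alpha+\beta}}\lesssim \varrho_\alpha(\mathbf X)\|Y,Y'\|_{\mathcal D^{2\alpha}_{X,\gamma}}(v-u)^{3\alpha-\beta}$ does hold uniformly in $t$ for all $\beta<3\alpha$ (for $\beta>\alpha$ or $\beta>2\alpha$ you must use the smoothing \eqref{group1} on $S_{t-w}$ rather than \eqref{group2}, and then use $t-w\ge v-w$, a case distinction your sketch glosses over but which is harmless).

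The genuine gap is in how you pass from the defect bound to the conclusion on the whole range $0\le\beta<3\alpha$. After the crude replacement $(t-u)^{-\beta}\le(v-u)^{-\beta}$ the defect exponent is $3\alpha-\beta$, and the plain sewing lemma only applies when $3\alpha-\beta>1$; this already excludes a nonempty range (e.g.\ $\beta\in[3\alpha-1,3\alpha)$) and, in particular, it never yields existence of the limit in $\mathcal H_\gamma$ itself, since that corresponds to $\beta=2\alpha$ where the exponent is only $\alpha<1$. Your proposed patch --- interpolating via \eqref{interp} against an a priori bound in the \emph{coarser} space $\mathcal H_{\gamma-2\alpha}$ --- cannot work: interpolation only produces estimates in spaces lying between two spaces where you already have bounds, so it cannot upgrade a bound in $\mathcal H_{\gamma-2\alpha}$ to one in the finer space $\mathcal H_{\gamma-2\alpha+\beta}$; and if instead you interpolate between $\mathcal H_{\gamma-2\alpha}$ (exponent $3\alpha$) and a crude bound in some finer space $\mathcal H_{\gamma-2\alpha+\beta'}$, the resulting time exponent is $3\alpha(\beta'-\beta)/\beta'<3\alpha-\beta$, strictly worse than claimed, unless the finer-space endpoint already carries the exponent $3\alpha-\beta'$ --- which is the statement you are trying to prove. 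The correct mechanism, which is exactly what the semigroup-adapted (``mild'') sewing lemma of \cite{GH19,GHN21} encodes, is to keep the weight $(t-u)^{-\beta}$ \emph{inside} the dyadic summation: at dyadic level $n$ the $2^n$ defect terms carry weights $\approx (j2^{-n}(t-s))^{-\beta}$, $j=1,\dots,2^n$, so summing $j^{-\beta}$ recovers a factor $2^{n(1-\beta)_+}$ and the level-$n$ contribution is $O(2^{-n(3\alpha-1)})(t-s)^{3\alpha-\beta}$, summable because $3\alpha>1$, for every $\beta<3\alpha$. Without this refinement (or an equivalent one), your construction neither reaches the full range of $\beta$ in \eqref{IntBound} nor the existence of the limit in $\mathcal H_\gamma$ asserted in \eqref{int}.
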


The following lemma tells us not only about the stability of rough integration, but also that `rough convolution' improves the spatial regularity of the controlled rough path, see \cite[Lemma 3.4]{HN22} or \cite[Corollary 4.6]{GHN21}. For the reader's convenience, the proof is provided in the Appendix.

\begin{lemma} \label{stab}
Let $\mathbf{X}=(X,\mathbb{X}) \in \mathscr{C}^{\alpha}([0,T];\mathbf{R}^{d})$ for some $\alpha \in (1/3, 1/2]$. Let $(Y,Y^{\prime}) \in \mathcal{D}_{X}^{2\alpha}([0,T];\mathcal{H}_{\gamma}^d)$. Then for every $0 \leq \sigma < \alpha$, the mapping
$$
(Y,Y^{\prime}) \to (Z,Z^{\prime}): = \left( \int_{0}^{\cdot} S_{\cdot-s} Y_{s} \mathrm{d}\mathbf{X}_{s}, Y \right) \in \mathcal D_{X}^{2\alpha} ([0,T];\mathcal{H}_{\gamma+\sigma})
$$
is continuous and the following estimate holds true:
\begin{equation}\label{Int}
\|Z,Z^{\prime}\|_{\mathcal{D}_{X,\gamma+\sigma}^{2\alpha}} \lesssim \left(1+\varrho_{\alpha}(\mathbf{X}) \right) \left(|Y_{0}|_{\mathcal H_{\gamma}^d} +|Y_{0}^\prime|_{\mathcal H_{\gamma-\alpha}^{d \times d}} \right)+ T^{\alpha - \sigma} \left(1+\varrho_{\alpha}(\mathbf{X}) \right) \|Y,Y^{\prime}\|_{\mathcal{D}_{X,\gamma}^{2\alpha}}.
\end{equation}
\end{lemma}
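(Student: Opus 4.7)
The plan is to take $Z'_s := Y_s$ as the Gubinelli derivative and to verify each of the five semi-norms in $\|Z,Z'\|_{\mathcal D_{X,\gamma+\sigma}^{2\alpha}}$ separately. Since $\sigma<\alpha$, the continuous embeddings $\mathcal H_\gamma\hookrightarrow\mathcal H_{\gamma+\sigma-\alpha}$ and $\mathcal H_{\gamma-\alpha}\hookrightarrow\mathcal H_{\gamma+\sigma-2\alpha}$ immediately furnish the required regularity of $Z'=Y$ and control both $\|Z'\|_{\infty,\mathcal H_{\gamma+\sigma-\alpha}^d}$ and $|Z'|_{\alpha,\mathcal H_{\gamma+\sigma-2\alpha}^d}$ (the latter via (\ref{Y})) by $\|Y,Y'\|_{\mathcal D_{X,\gamma}^{2\alpha}}$. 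Using the semigroup identity $S_{t-s}Z_s=\int_0^s S_{t-u}Y_u\mathrm d\mathbf X_u$, we first write $Z_{s,t}=(S_{t-s}-\operatorname{id})Z_s+\int_s^t S_{t-u}Y_u\mathrm d\mathbf X_u$, and then invoke Lemma \ref{RoughInt} to obtain the key decomposition
\begin{equation*}
R^Z_{s,t}=(S_{t-s}-\operatorname{id})Z_s+(S_{t-s}-\operatorname{id})Y_s X_{s,t}+S_{t-s}Y'_s\mathbb X_{s,t}+E_{s,t},
\end{equation*}
where $E_{s,t}$ denotes the remainder of (\ref{IntBound}) and satisfies the bound there in $\mathcal H_{\gamma-2\alpha+\beta}$ for any $0\le\beta<3\alpha$.

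For $\|Z\|_{\infty,\mathcal H_{\gamma+\sigma}}$ we apply (\ref{IntBound}) with $s=0$ and the admissible choice $\beta=2\alpha+\sigma$ (using $\sigma<\alpha$), combined with (\ref{group1}) on $S_tY_0$ and $S_tY'_0$; each of the three resulting terms carries a common factor $t^{\alpha-\sigma}\le T^{\alpha-\sigma}$. For $|R^Z|_{\alpha,\mathcal H_{\gamma+\sigma-\alpha}}$ we bound the four pieces of the decomposition as follows: the first via (\ref{group2}), so that the resulting $\|Z\|_{\infty,\mathcal H_{\gamma+\sigma}}$ is absorbed through the previous step; the second via (\ref{group2}) with exponent $\alpha-\sigma$; the third via (\ref{group1}) with exponent $\sigma$, combined with $|\mathbb X_{s,t}|\lesssim(t-s)^{2\alpha}$; and the fourth via (\ref{IntBound}) with $\beta=\alpha+\sigma$. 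In every case an extra factor $(t-s)^{\alpha-\sigma}\le T^{\alpha-\sigma}$ appears after dividing by $(t-s)^\alpha$.

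The main obstacle is the bound on $|R^Z|_{2\alpha,\mathcal H_{\gamma+\sigma-2\alpha}}$, especially the term $S_{t-s}Y'_s\mathbb X_{s,t}$: since $|\mathbb X_{s,t}|\lesssim(t-s)^{2\alpha}$ already saturates the $2\alpha$-H\"older scale, one cannot extract the missing $(t-s)^{\alpha-\sigma}$ from (\ref{group1}) without producing a blow-up as $t\to s$. To get around this we split $Y'_s=Y'_0+(Y'_s-Y'_0)$. The $Y'_0$-contribution is estimated using the uniform boundedness of the semigroup (the case $\sigma=0$ of (\ref{group1})) together with the embedding $\mathcal H_{\gamma-\alpha}^{d\times d}\hookrightarrow\mathcal H_{\gamma+\sigma-2\alpha}^{d\times d}$, yielding a term $\lesssim|Y'_0|_{\mathcal H_{\gamma-\alpha}^{d\times d}}|\mathbb X|_{2\alpha}$ that is absorbed into the initial-data part $(1+\varrho_\alpha(\mathbf X))(|Y_0|+|Y'_0|)$ of the target estimate. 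For the $Y'_s-Y'_0$ contribution we invoke the interpolation inequality (\ref{interp}) between $\mathcal H_{\gamma-2\alpha}^d$, where $|Y'_s-Y'_0|_{\mathcal H_{\gamma-2\alpha}^d}\lesssim s^\alpha|Y'|_{\alpha,\mathcal H_{\gamma-2\alpha}^d}$, and $\mathcal H_{\gamma-\alpha}^d$, where the difference is bounded by $2\|Y'\|_{\infty,\mathcal H_{\gamma-\alpha}^d}$, with intermediate index $\gamma-2\alpha+\sigma$; Young's inequality applied to the resulting geometric mean produces an $s^{\alpha-\sigma}\le T^{\alpha-\sigma}$ factor times $\|Y,Y'\|_{\mathcal D_{X,\gamma}^{2\alpha}}$. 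The remaining three pieces of $R^Z$ at this level are handled as in the $\alpha$-H\"older step, via (\ref{group2}) with exponents $2\alpha$ and $2\alpha-\sigma$ and via (\ref{IntBound}) with $\beta=\sigma$. Continuity of the mapping finally follows from its linearity in $(Y,Y')$: applying the quantitative bound (\ref{Int}) to the difference of two inputs yields Lipschitz continuity with the stated constants.
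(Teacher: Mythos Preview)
Your decomposition of $R^Z_{s,t}$ and your treatment of the problematic term $S_{t-s}Y'_s\mathbb X_{s,t}$ at the $2\alpha$-level are correct and match the paper's approach. The gap lies earlier, in your handling of $\|Z'\|_{\infty,\mathcal H_{\gamma+\sigma-\alpha}^d}$ and $|Z'|_{\alpha,\mathcal H_{\gamma+\sigma-2\alpha}^d}$. Using only the embeddings $\mathcal H_\gamma\hookrightarrow\mathcal H_{\gamma+\sigma-\alpha}$ and $\mathcal H_{\gamma-\alpha}\hookrightarrow\mathcal H_{\gamma+\sigma-2\alpha}$ (together with (\ref{Y})) yields a bound of the form $(1+\varrho_\alpha(\mathbf X))\|Y,Y'\|_{\mathcal D_{X,\gamma}^{2\alpha}}$ with \emph{no} $T^{\alpha-\sigma}$ prefactor. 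That contradicts the precise form of (\ref{Int}), where the entire $\|Y,Y'\|$-contribution must carry $T^{\alpha-\sigma}$; this smallness is exactly what is exploited downstream (e.g.\ in the proof of Lemma~\ref{slowest}) to close fixed-point arguments.

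The remedy is the same trick you already use for $S_{t-s}Y'_s\mathbb X_{s,t}$: split off the initial value and interpolate on the increment. For $\|Y\|_{\infty,\mathcal H_{\gamma+\sigma-\alpha}^d}$ write $|Y_s|_{\mathcal H_{\gamma+\sigma-\alpha}^d}\le|Y_0|_{\mathcal H_\gamma^d}+|Y_s-Y_0|_{\mathcal H_{\gamma+\sigma-\alpha}^d}$ and apply (\ref{interp}) between $\mathcal H_{\gamma-\alpha}^d$ and $\mathcal H_\gamma^d$ to get $|Y_s-Y_0|_{\mathcal H_{\gamma+\sigma-\alpha}^d}\lesssim s^{\alpha-\sigma}\|Y\|_{\infty,\mathcal H_\gamma^d}^{\sigma/\alpha}|Y|_{\alpha,\mathcal H_{\gamma-\alpha}^d}^{(\alpha-\sigma)/\alpha}$, then Young's inequality. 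For $|Y|_{\alpha,\mathcal H_{\gamma+\sigma-2\alpha}^d}$ use (\ref{Y}) and then bound $\|Y'\|_{\infty,\mathcal H_{\gamma+\sigma-2\alpha}^{d\times d}}$ and $|R^Y|_{\alpha,\mathcal H_{\gamma+\sigma-2\alpha}^d}$ by the same split-and-interpolate device (between the $\gamma-2\alpha$ and $\gamma-\alpha$ levels). This is what the paper does, and it produces the required $|Y_0|+|Y'_0|$ part plus a $T^{\alpha-\sigma}\|Y,Y'\|$ part.
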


\subsection{Main result}\label{AP}

Let $\alpha_0 \in (1/3,1/2]$ and let $(\Omega, \mathcal{F}, \{\mathcal{F}_t\}_{t \in [0, T]},\mathbb{P})$ be a filtered probability space satisfying the usual conditions.
We consider a semilinear slow--fast system defined on a monotone family of interpolation Hilbert spaces $(\mathcal H_\gamma)_{\gamma \in \mathbf R}$.
\begin{equation}\label{eq1}
	\left\{ \begin{aligned}
		&\mathrm{d} X_t^\varepsilon=\big[L X_t^\varepsilon+F_1 (X_t^\varepsilon,Y_t^\varepsilon)\big]\mathrm{d} t+G_1 (X_t^\varepsilon)\mathrm{d} \mathbf B_t,\\
		&\mathrm{d} Y_t^\varepsilon=\frac 1\varepsilon \big[L Y_t^\varepsilon+F_2 (X_t^\varepsilon,
		Y_t^\varepsilon)\big]\mathrm{d} t+\frac 1{\sqrt{\varepsilon}}G_2 (X_t^\varepsilon,
		Y_t^\varepsilon)  \mathrm{d} \mathbf W^{\text{It\^{o}} }_t,\\
		&X_0^\varepsilon=x, ~~ Y_0^\varepsilon=y .
	\end{aligned}  \right.
\end{equation}
Here,
$\mathbf B=\{(B_{s,t}, \mathbb B_{s,t})\}_{0\leq s\leq t\leq T}$ is an $\mathscr{C}^{\alpha}([0,T];\mathbf{R}^{d_1})$-valued random variable (i.e., random rough path) defined on probability space $(\Omega, \mathcal{F}, \{\mathcal{F}_t\}_{t \in [0, T]}, \mathbb{P})$ for every $\alpha \in (1/3, \alpha_0)$,
and let $t\mapsto (B_{0,t}, \mathbb B_{0,t})$ is $\{\mathcal{F}_t\}$-adapted,
$\{W_t\}_{t\in [0,T]}$ is a standard $d_2$-dimensional $\{\mathcal{F}_t\}$-Brownian motion.
The It\^{o} rough path lift of $W$ is denoted by $\mathbf W^{\text{It\^{o}} }=(W,\mathbb W^{\text{It\^{o}} })$.
We assume that $W$ and $\mathbf B$ are independent.
The coefficients $F_1, F_2$ and $G_1, G_2$ are nonlinear terms satisfying certain suitable assumptions.
By means of (\ref{int}), Eq. (\ref{eq1}) can be formulated in a mild form
\begin{equation}\label{solution}
	\left\{ \begin{aligned}
		&X_t^\varepsilon=S_t x+ \int_0^t S_{t-s} F_1 (X_s^\varepsilon,Y_s^\varepsilon) \mathrm{d}s+ \int_0^t S_{t-s} G_1 (X_s^\varepsilon) \mathrm{d} \mathbf B_s, \\
		&Y_t^\varepsilon= S_{t/\varepsilon} y+ \frac 1\varepsilon \int_0^t S_{(t-s)/{\varepsilon}} F_2 (X_s^\varepsilon,Y_s^\varepsilon) \mathrm{d}s +\frac 1{\sqrt{\varepsilon}}\int_0^t S_{(t-s)/{\varepsilon}} G_2 (X_s^\varepsilon,Y_s^\varepsilon) \mathrm{d} \mathbf W^{\text{It\^{o}}}_s.
	\end{aligned}  \right.
\end{equation}

We impose the following assumptions on the coefficients $F_1, F_2, G_1$ and $G_2$ in Eq. (\ref{eq1}).

\begin{enumerate}
	\item [$(\mathbf{H1})$]
	Let $\gamma \in \mathbf R$,
and let $F_1:\mathcal{H}_\gamma \times \mathcal{H}_{\gamma}  \to  \mathcal{H}_{\gamma-\alpha}$ and
$F_2:\mathcal{H}_\gamma \times \mathcal{H}_{\gamma} \to\mathcal{H}_{\gamma} $.
Moreover, there exist constants $C,L_{F_2}>0$ such that for all $x,x_1,x_2 \in  \mathcal{H}_\gamma$, $y, y_1,y_2\in \mathcal{H}_\gamma$ we have
		\begin{equation}\label{F1Lip}
			|F_1(x_1,y_1)-F_1(x_2,y_2)|_{\mathcal H_{\gamma-\alpha}} \leq  C(|x_1-x_2|_{\mathcal H_\gamma}+|y_1-y_2|_{\mathcal H_\gamma}),
		\end{equation}
		\begin{equation}\label{F1Bound}
		\|F_1\|_{\infty, \mathcal H_{\gamma-\alpha}}:=\sup_{(x,y)\in \mathcal{H}_\gamma \times \mathcal{H}_\gamma } |F_1(x,y)|_{\mathcal H_{\gamma-\alpha}} <\infty,
	\end{equation}
	and
	\begin{equation}\label{F2Lip}
	|F_2(x_1,y_1)-F_2(x_2,y_2)|_{\mathcal H_{\gamma}} \leq  C|x_1-x_2|_{\mathcal H_\gamma}+L_{F_2}|y_1-y_2|_{\mathcal H_\gamma}.
	\end{equation}

\item [$(\mathbf{H2})$] Let $G_1 \in \mathcal C_b^3(\mathcal{H}_{\gamma-\theta} ; \mathcal{H}_{\gamma-\theta-\sigma}^{d_1})$
with some $\sigma \in [0,\frac \alpha2)$ and $\theta \in \{0,\alpha, 2\alpha\}$,
i.e. there is a constant $C>0$ such that for any $x,y \in \mathcal{H}_{\gamma-\theta}$ we have
\begin{align}
	&|G_1(x)-G_1(y)|_{\mathcal{H}_{\gamma-\theta-\sigma}^{d_1}} \leq C |x-y|_{\mathcal H_{\gamma-\theta}}, \label{G1} \\
	&  |DG_1(x)-DG_1(y)|_{\mathcal L(\mathcal H_{\gamma-\theta}; \mathcal{H}_{\gamma-\theta-\sigma}^{d_1})}
		\leq C |x-y|_{\mathcal H_{\gamma-\theta}},   \label{DG1}    \\
	&  |D^2G_1(x)-D^2G_1(y)|_{\mathcal L (\mathcal H_{\gamma-\theta}; \mathcal L(\mathcal H_{\gamma-\theta}; \mathcal{H}_{\gamma-\theta-\sigma}^{d_1})) }
		\leq C |x-y|_{\mathcal H_{\gamma-\theta}}.   \label{D2G1}
	\end{align}	
Moreover, we assume that
\begin{equation}\label{DG1G1}
	|DG_1(x)G_1(x)-DG_1(y)G_1(y)|_{\mathcal{H}_{\gamma-\theta-\sigma}^{d_1 \times d_1}}\leq  C |x-y|_{\mathcal H_{\gamma-\theta}}.
\end{equation}

\item [$(\mathbf{H3})$] Let
		$G_2 \in \mathcal C_b^3(\mathcal{H}_{\gamma-\theta} \times \mathcal{H}_{\gamma-\theta}; \mathcal{H}_{\gamma-\theta-\sigma}^{d_2})$ with some $\sigma \in [0,\frac \alpha2)$ and $\theta \in \{0,\alpha, 2\alpha\}$,
	and there exist constants $C, L_{G_2}>0$ such that for any $x_1,x_2,y_1,y_2 \in \mathcal{H}_{\gamma-\theta}$, we have
	\begin{equation*}
		|G_2(x_1,y_1)-G_2(x_2,y_2)|_{\mathcal{H}_{\gamma-\theta}^{d_2} } \leq  C|x_1-x_2|_{\mathcal H_{\gamma-\theta}}+L_{G_2}|y_1-y_2|_{\mathcal H_{\gamma-\theta}}.
	\end{equation*}

	\item [$(\mathbf{H4})$] Let $\{e_n\}_{n \in \mathbf N}$ be a complete orthonormal basis of $\mathcal{H}_{\gamma}$, there exists a non-decreasing sequence of real positive numbers $\{ \lambda_n\}_{n \in \mathbf N}$ such that
	\begin{align*}
		L e_n=-\lambda_n e_n, ~~~~\text{for all}~ n \in \mathbf N.
	\end{align*}

	\item[$(\mathbf{H5})$] The smallest eigenvalue $\lambda_1$ of $-L$ and the constants $L_{F_2},L_{G_2}$ satisfy
	$$ \lambda_1- L_{F_2}-3 L_{G_2}^2 >0.$$

	\item[$(\mathbf{H6})$] For any $\alpha \in (1/3,\alpha_0)$ and any $p\in [1,\infty)$, we set
	$$\mathbb E\big[\interleave \mathbf{B} \interleave_\alpha^p \big] <\infty,$$
	where the homogeneous norm $\interleave \mathbf{B} \interleave_\alpha:=|B|_\alpha +\sqrt{ |\mathbb B|_{2\alpha} }$,
	which, although not a norm in the usual sense of normed linear spaces, is a very adequate concept for rough paths.

\end{enumerate}

\begin{remark}
(i) The assumption $(\mathbf{H5})$ is mainly used to ensure that the mixed random rough path $\mathbf M=(M, \mathbb M) \in \mathscr{C}^\alpha ([0,T]; \mathbf R^{d_1+d_2}), \mathbb P$-a.s. for $\alpha \in (1/3, \alpha_0)$ ~(see Lemma \ref{mixrandom} in Section \ref{proba}).

(ii) It is worth noting that, for the well-posedness, the spatial regularity assumption on $F_1$ can be relaxed to $F_1:\mathcal{H}_\gamma \times \mathcal{H}_{\gamma}  \to  \mathcal{H}_{\gamma-\delta}$ for some $\delta \in [2\alpha,1)$.
However, for the proof of the averaging principle, a stronger regularity condition is needed, namely $F_1:\mathcal{H}_\gamma \times \mathcal{H}_{\gamma}  \to \mathcal{H}_{\gamma-\alpha}$, in order to derive crucial estimates required in the convergence analysis.

(iii) In contrast to the assumptions in \cite{HN22,YLZ23}, we impose a more stringent constraint $\sigma \in [0, \frac{\alpha}{2})$ in conditions $(\mathbf{H2})$ and $(\mathbf{H3})$, which is necessitated by the technical estimates employed in the proof of the averaging principle.
\end{remark}

Throughout this paper, we assume that there exists a unique global mild solution to Eq. (\ref{eq1}) (see Section \ref{sect3.1} for an explanation).

\begin{lemma}\label{wellpose}
	Let $\gamma \in \mathbf R$ and $(x,y) \in \mathcal H_\gamma \times \mathcal H_\gamma$, and suppose that $(\mathbf{H1})$--$(\mathbf{H3})$ and $(\mathbf{H6})$ hold. Then there exists a unique function $(X^\varepsilon, Y^\varepsilon) \in \mathcal C([0,T]; \mathcal H_\gamma) \times \mathcal C([0,T]; \mathcal H_\gamma)$ such that $\left(X^\varepsilon,G_1(X^\varepsilon) \right) \times \left( Y^\varepsilon,G_2(X^\varepsilon,Y^\varepsilon) \right)$ are controlled rough paths in the sense of Definition \ref{CRP} and $(X^\varepsilon, Y^\varepsilon) $ is a mild solution to the slow--fast system (\ref{eq1}) of rough PDEs, namely for $\omega \in \Omega$
	\begin{equation*}
		\left\{ \begin{aligned}
			&X_t^\varepsilon=S_t x+ \int_0^t S_{t-s} F_1 (X_s^\varepsilon,Y_s^\varepsilon) \mathrm{d}s+ \int_0^t S_{t-s} G_1 (X_s^\varepsilon) \mathrm{d} \mathbf B_s(\omega), \\
			&Y_t^\varepsilon= S_{t/\varepsilon} y+ \frac 1\varepsilon \int_0^t S_{(t-s)/{\varepsilon}} F_2 (X_s^\varepsilon,Y_s^\varepsilon) \mathrm{d} s +\frac 1{\sqrt{\varepsilon}}\int_0^t S_{(t-s)/{\varepsilon}} G_2 (X_s^\varepsilon,Y_s^\varepsilon) \mathrm{d} \mathbf W^{\text{It\^{o}} }_s(\omega).
		\end{aligned}  \right.
	\end{equation*}
	
\end{lemma}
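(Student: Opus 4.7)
The plan is to establish well-posedness by a pathwise fixed-point argument on the space of controlled rough paths, mirroring the approach of \cite{GHN21,HN22} but adapted to the coupled slow--fast structure and to the coexistence of a general rough integral against $\mathbf B$ and an It\^{o} integral against $W$. By $(\mathbf{H6})$ together with standard properties of the It\^{o} Brownian rough path, there is an event $\Omega_0$ of full measure on which, for every $\alpha \in (1/3,\alpha_0)$, both $\mathbf B(\omega)$ and $\mathbf W^{\text{It\^{o}}}(\omega)$ are $\alpha$-H\"{o}lder rough paths. I fix such an $\omega$ and treat the system pathwise henceforth.

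On a small interval $[0,\tau]$ I define a solution map $\Psi$ on a closed ball of the product space $\mathcal D_{B,\gamma}^{2\alpha}([0,\tau]) \times \mathcal D_{W,\gamma}^{2\alpha}([0,\tau])$, restricted to pairs $((X,X'),(Y,Y'))$ satisfying $X'=G_1(X)$ and $Y'=G_2(X,Y)$: the image $\Psi(\tilde X,\tilde Y)$ is the right-hand side of (\ref{solution}) with $\tilde X,\tilde Y$ inserted, paired with the prescribed Gubinelli derivatives $G_1(\tilde X)$ and $G_2(\tilde X,\tilde Y)$. The drift terms inherit the controlled rough path structure with vanishing Gubinelli derivative from the interpolation estimates (\ref{group1})--(\ref{group2}) together with $(\mathbf{H1})$. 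For the rough-integral term in the slow equation, I first verify by the composition/chain rule for controlled rough paths that $(G_1(\tilde X), DG_1(\tilde X)G_1(\tilde X))$ is itself a controlled rough path in $\mathcal D_{B,\gamma-\sigma}^{2\alpha}$, which is where (\ref{G1})--(\ref{DG1G1}) enter and where the constraint $\sigma<\alpha/2$ of $(\mathbf{H2})$ is used, and then apply Lemma \ref{stab} to conclude that the rough convolution lies in $\mathcal D_{B,\gamma}^{2\alpha}$ with a $\tau^{\alpha-\sigma}$ prefactor. The It\^{o} integral in the fast equation is first interpreted via classical SPDE theory to produce a mild solution $Y^\varepsilon$, and then lifted to a controlled rough path over $W$ by checking that the remainder $Y_{s,t}-G_2(X_s,Y_s)W_{s,t}$ is of order $|t-s|^{2\alpha}$ in $\mathcal H_{\gamma-2\alpha}$; this follows from a Burkholder--Davis--Gundy bound combined with Kolmogorov's continuity criterion.

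Choosing $\tau$ small enough, depending on $\interleave \mathbf B(\omega)\interleave_\alpha$, on an analogous norm of $\mathbf W^{\text{It\^{o}}}(\omega)$, and on the Lipschitz data, the prefactor $\tau^{\alpha-\sigma}$ makes $\Psi$ a contraction on the ball and yields local existence and uniqueness of the pair of controlled rough paths. Since $\tau$ depends only on the homogeneous rough-path norms and on deterministic structural constants, and since boundedness of $F_1$ from $(\mathbf{H1})$, the Lipschitz estimates on $F_2$ and $G_2$, and the semigroup contraction implied by $(\mathbf{H4})$ preclude blow-up of the controlled rough path norm, the local solutions can be concatenated along a deterministic partition to produce a unique global mild solution on $[0,T]$.

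The main obstacle is the joint handling of two genuinely different integrals within a single fixed-point scheme: the slow equation involves a rough integral against $\mathbf B$ in the sense of Lemma \ref{RoughInt}, while the fast equation involves an It\^{o} stochastic integral against $W$. Arranging a common function-space framework on which both integrals produce controlled rough paths with a uniform contraction factor, so that the cross-coupling ($Y$ appearing in $F_1(X,Y)$, and $X$ appearing in $F_2$ and $G_2$) does not break the contraction, is the delicate organizational point on which the argument hinges.
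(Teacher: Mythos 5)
Your proposal does not follow the paper's route, and as written it has a genuine gap. The paper does not run a fixed point on two separate controlled-path spaces over $B$ and over $W$; instead (Section \ref{sect3.1} together with Definition \ref{Xi} and Lemma \ref{mixrandom}) it assembles the joint random rough path $\mathbf M=(M,\mathbb M)$ over $\mathbf R^{d_1+d_2}$, including the cross second-order data $I[B,W]$ and $I[W,B]$, rewrites the whole slow--fast system as a single semilinear rough PDE (\ref{eq2}) on the product space $\mathscr H_\gamma$ with the block-diagonal semigroup $\mathcal S_t=\mathrm{diag}(S_t,S_{t/\varepsilon})$, and invokes the pathwise well-posedness theory for such equations (\cite[Theorem 3.2]{HN22}, plus concatenation) to get a unique global solution $(Z^\varepsilon,\mathcal G(Z^\varepsilon))\in\mathcal D_{M}^{2\alpha}$; only afterwards does Lemma \ref{RSPDE} identify the rough integral against $\mathbf W^{\text{It\^o}}$ with the It\^o integral, so that $(X^\varepsilon,Y^\varepsilon)$ solves (\ref{solution}) in the asserted sense.

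The gap in your scheme is twofold. First, you cannot combine a pathwise contraction with a genuine It\^o integral: for a fixed $\omega$ and an arbitrary candidate pair in your ball, $\int_0^t S_{(t-s)/\varepsilon}G_2(\tilde X_s,\tilde Y_s)\,\mathrm d W_s$ is simply not defined (It\^o integration requires adapted processes and is only defined up to null sets depending on the integrand), and the Burkholder--Davis--Gundy/Kolmogorov bounds you invoke yield moment estimates, not the $\omega$-wise smallness needed for your claimed contraction with constants depending on $\interleave\mathbf B(\omega)\interleave_\alpha$. Your alternative remark that $Y^\varepsilon$ could ``first'' be produced by classical SPDE theory does not repair this, since $F_2,G_2$ depend on $X^\varepsilon$, which in turn depends on $Y^\varepsilon$ through $F_1$, so no decoupled two-step construction is available. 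Second, the structural claim that $(Y^\varepsilon,G_2(X^\varepsilon,Y^\varepsilon))$ is a controlled path over $W$ alone fails: the increment of $s\mapsto G_2(X^\varepsilon_s,Y^\varepsilon_s)$ contains the term $D_xG_2(X^\varepsilon_s,Y^\varepsilon_s)X^\varepsilon_{s,t}$, which is only of order $|t-s|^\alpha$ (since $X^\varepsilon_{s,t}$ carries the $G_1(X^\varepsilon_s)B_{s,t}$ contribution), so the remainder required by Definition \ref{CRP} cannot be of order $2\alpha$ with respect to $W$. This is exactly why the cross areas $I[B,W]$, $I[W,B]$ and the joint controlling path $\mathbf M$ are needed, and why the paper formulates the fast integral as a rough integral against the It\^o Brownian rough path within the joint framework, recovering the It\^o interpretation only a posteriori.
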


Given $x \in \mathcal H_\gamma$, consider the following frozen equation:
 \begin{equation}\label{frozen}
	\left\{ \begin{aligned}
		\mathrm{d} Y_t&=\big[L Y_t+F_2(x,Y_t)\big] \mathrm{d} t+G_2(x,Y_t) \mathrm{d} \widetilde{W}_t,\\
		Y_0&= y,
	\end{aligned} \right.
\end{equation}
where $\{\widetilde{W}_t\}_{t\geq0}$ is a $d_2$-dimensional Brownian motion defined on another probability space $(\widetilde{\Omega} ,\widetilde{\mathcal F},\{\widetilde{\mathcal F}_t\}_{t\geq0},\widetilde{\mathbb P})$, and independent of $\{W_t\}_{t \in [0,T]}$.

Under conditions (\ref{F2Lip}) and $(\mathbf{H3})$, it is easy to prove that Eq. (\ref{frozen}) has a unique strong solution (see \cite{DF10}), given by
\begin{equation*}
 Y_t^{x,y}=y + \int_0^t L Y_s^{x,y} \mathrm{d} s +\int_0^t F_2(x, Y_s^{x,y}) \mathrm{d}s+ \int_0^t G_2(x, Y_s^{x,y}) \mathrm{d} \widetilde{W}_s, ~~\text{for any}~t >0.
 \end{equation*}
Moreover, the process $\{Y_t^{x,y}\}_{t \geq 0}$ admits a unique invariant probability measure $\mu^x (\mathrm{d} y)$ under the condition ($\mathbf{H5}$) (see \cite[Theorem 2.2]{HLL20}).
The averaged equation for system (\ref{eq1}) is given by
\begin{equation}\label{aveq}
	\left\{ \begin{aligned}
		\mathrm{d} \bar{X}_t&=\big[ L \bar{X}_t+\bar{F}_1 (\bar{X}_t)\big] \mathrm{d} t+G_1(\bar{X}_t)\mathrm{d} \mathbf B_t,\\
		\bar{X}_0&=x,
	\end{aligned} \right.
\end{equation}
where
$$\bar{F}_1 (x):=\displaystyle \int_{\mathcal{H}_\gamma} F_1 (x,y) \mu^x (\mathrm{d} y).$$
According to (\ref{F1Lip}), we know that $F_1$ is Lipschitz continuous. Therefore, we can show that $\bar{F}_1:\mathcal{H}_\gamma \to \mathcal{H}_{\gamma-\alpha}$ is also Lipschitz continuous (see e.g. \cite[Remark 4.1]{LLPX25}), i.e. for any $x_1,x_2 \in \mathcal H_\gamma$ we have
\begin{equation}\label{Fbar}
	|\bar{F}_1(x_1)-\bar{F}_1(x_2)|_{\mathcal H_{\gamma-\alpha}} \leq C |x_1-x_2|_{\mathcal H_\gamma}.
\end{equation}
Thus Eq. (\ref{aveq}) admits a unique global solution $\{\bar{X}_t\}_{t \in [0,T]}$ (see \cite[Theorem 3.2]{HN22}).
Below we give the main result of this paper.

\begin{theorem}\label{MainRes}
	Let $\gamma \in \mathbf R$, and let $X_t^\varepsilon$ and $\bar{X}_t$ satisfy Eqs. (\ref{eq1}) and (\ref{aveq}), with common initial value $x \in \mathcal H_{\gamma+\zeta}$ with $\zeta \in (\frac \alpha2,\alpha-\sigma)$.
	Assume that $(\mathbf{H1})$--$(\mathbf{H6})$ hold. Then we have
	\begin{equation}\label{Averaging}
		\lim_{\varepsilon\rightarrow 0}  \mathbb E\left[\sup_{t \in [0,T]}|X_t^\varepsilon-\bar{X}_t|_{\mathcal H_\gamma}^{2}\right]=0.
	\end{equation}
\end{theorem}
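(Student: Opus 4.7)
The plan is to adapt the Khasminskii time-discretisation method to the semigroup-free controlled rough path framework of Section \ref{sect2}, exploiting the interpolation structure of $(\mathcal H_\gamma)_{\gamma\in\mathbf R}$. Subtracting the mild formulations (\ref{solution}) and (\ref{aveq}), I would write $X_t^\varepsilon-\bar X_t=I_\varepsilon^F(t)+I_\varepsilon^G(t)$ with
\begin{equation*}
I_\varepsilon^F(t)=\int_0^t S_{t-s}\bigl[F_1(X_s^\varepsilon,Y_s^\varepsilon)-\bar F_1(\bar X_s)\bigr]\mathrm{d}s,\qquad I_\varepsilon^G(t)=\int_0^t S_{t-s}\bigl[G_1(X_s^\varepsilon)-G_1(\bar X_s)\bigr]\mathrm{d}\mathbf B_s,
\end{equation*}
and split the drift as $F_1(X_s^\varepsilon,Y_s^\varepsilon)-\bar F_1(\bar X_s)=[F_1(X_s^\varepsilon,Y_s^\varepsilon)-\bar F_1(X_s^\varepsilon)]+[\bar F_1(X_s^\varepsilon)-\bar F_1(\bar X_s)]$. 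The second, Lipschitz piece is absorbed into a Gronwall argument via (\ref{Fbar}), so the effort reduces to estimating $I_\varepsilon^G$ together with the oscillatory term $J_\varepsilon(t):=\int_0^t S_{t-s}[F_1(X_s^\varepsilon,Y_s^\varepsilon)-\bar F_1(X_s^\varepsilon)]\mathrm{d}s$.

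\textbf{Rough integral and interpolation.} For $I_\varepsilon^G$, the key observation is that $(G_1(X^\varepsilon)-G_1(\bar X),\,DG_1(X^\varepsilon)G_1(X^\varepsilon)-DG_1(\bar X)G_1(\bar X))$ is a controlled rough path on the family $(\mathcal H_\gamma)_{\gamma\in\mathbf R}$, thanks to the $\mathcal C_b^3$ regularity in $(\mathbf{H2})$; applying the stability estimate (\ref{Int}) of Lemma \ref{stab} to this difference yields a bound of the schematic form
\begin{equation*}
\|I_\varepsilon^G,\,\cdot\|_{\mathcal D_{B,\gamma}^{2\alpha}}\lesssim \bigl(1+\varrho_\alpha(\mathbf B)\bigr)\,T^{\alpha-\sigma}\,\|X^\varepsilon-\bar X,\,G_1(X^\varepsilon)-G_1(\bar X)\|_{\mathcal D_{B,\gamma-\sigma}^{2\alpha}},
\end{equation*}
which, after localising on $\varrho_\alpha(\mathbf B)$ using the moment bound $(\mathbf{H6})$, gives a contraction on short subintervals. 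Since $F_1$ and $\bar F_1$ only take values in $\mathcal H_{\gamma-\alpha}$, the natural estimate for $J_\varepsilon$ lives in $\mathcal H_{\gamma-\alpha}$; I would then transfer the bound to $\mathcal H_\gamma$ via the smoothing property (\ref{group1}) of $S_{t-s}$ together with the interpolation inequality (\ref{interp}), the condition $\zeta\in(\alpha/2,\alpha-\sigma)$ on the initial data being precisely what is needed to compensate the resulting loss of spatial regularity.

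\textbf{Khasminskii discretisation for $J_\varepsilon$.} To control $J_\varepsilon$ I would partition $[0,T]$ with step $\Delta=\Delta(\varepsilon)$ satisfying $\Delta\to 0$ and $\Delta/\varepsilon\to\infty$, and introduce auxiliary processes $\hat X^\varepsilon,\hat Y^\varepsilon$ in which the slow component is frozen at $X_{k\Delta}^\varepsilon$ on each block $[k\Delta,(k+1)\Delta)$, while $\hat Y^\varepsilon$ solves the frozen fast equation driven by the same $W$. Three estimates are then needed: (a) $\sup_s|X_s^\varepsilon-\hat X_s^\varepsilon|_{\mathcal H_\gamma}\to 0$ from the H\"older regularity of $X^\varepsilon$ and the rough integral bound (\ref{IntBound}); (b) $\mathbb E\sup_s|Y_s^\varepsilon-\hat Y_s^\varepsilon|_{\mathcal H_\gamma}\to 0$ from It\^o-semigroup calculus using the strong dissipativity $(\mathbf{H5})$; and (c) for each block, $\mathbb E[F_1(\hat X_{k\Delta}^\varepsilon,\hat Y_s^\varepsilon)-\bar F_1(\hat X_{k\Delta}^\varepsilon)\mid\mathcal F_{k\Delta}]\to 0$ as $(s-k\Delta)/\varepsilon\to\infty$ by geometric ergodicity of the frozen equation (\ref{frozen}), which is itself guaranteed by $(\mathbf{H5})$. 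The independence $W\perp\mathbf B$ and the adaptedness of $\mathbf B$ to $\{\mathcal F_t\}$ are essential for conditioning on $\mathcal F_{k\Delta}$ while keeping the rough path measurable.

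\textbf{Main obstacle.} The hard part is reconciling the pathwise, random character of the rough integral bounds with the conditional ergodic averaging in the Khasminskii step. The stability estimate for $I_\varepsilon^G$ produces a multiplicative factor $\varrho_\alpha(\mathbf B)$ that is random but has every moment by $(\mathbf{H6})$, while the ergodic step for $J_\varepsilon$ forces conditioning on $\mathcal F_{k\Delta}$ and exploits $W\perp\mathbf B$. The choice $\Delta=\Delta(\varepsilon)$ must therefore be delicate: small enough for the slow-variable discretisation to be accurate in the $\mathcal D_{B,\gamma}^{2\alpha}$-norm, yet large enough relative to $\varepsilon$ for the frozen dynamics to equilibrate on each block. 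Once this balance is struck, together with the loss constraint $\sigma<\alpha/2$ from $(\mathbf{H2})$--$(\mathbf{H3})$ and the interpolation inequality (\ref{interp}) supported by $x\in\mathcal H_{\gamma+\zeta}$, a final Gronwall argument closes the estimate and delivers (\ref{Averaging}).
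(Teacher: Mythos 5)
Your proposal follows essentially the same route as the paper's proof: the Khasminskii time discretisation with the auxiliary fast process frozen at $X^\varepsilon_{s(\delta)}$, the controlled-rough-path stability estimates (Lemmas \ref{stab} and \ref{Glem2}) for the rough-integral difference $\int_0^\cdot S_{\cdot-s}[G_1(X_s^\varepsilon)-G_1(\bar X_s)]\mathrm{d}\mathbf B_s$, exponential ergodicity of the frozen equation (conditioning on $\mathcal F_{k\delta}$ and using $W\perp\mathbf B$) for the averaged drift, the semigroup smoothing/interpolation with $x\in\mathcal H_{\gamma+\zeta}$, and a balance $\delta=\delta(\varepsilon)$. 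The only cosmetic difference is that where you invoke a ``Gronwall argument'' to close the estimate, the paper instead uses a smallness-of-interval contraction followed by iteration over finitely many subintervals (their number controlled through $\varrho_{\alpha}(\mathbf B)$), since the rough-integral term is bounded by the full controlled norm $\|X^\varepsilon-\bar X,\,G_1(X^\varepsilon)-G_1(\bar X)\|_{\mathcal D_{B,\gamma}^{2\alpha}}$ rather than by a time integral of the supremum norm.
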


\section{Slow--fast system of rough PDEs}\label{sect3}

\subsection{Deterministic aspects} \label{sect3.1}

In this subsection we discuss slow--fast system (\ref{eq1}) of rough PDEs from a deterministic perspective.
To this end, we first introduce some notations.
As before, $\alpha \in (1/3,\alpha_0)$ is assumed.
Denote by $\mathbf R^{d_1+d_2}:=\mathbf R^{d_1} \oplus \mathbf R^{d_2}$ the direct sum decomposition.
Let $\mathbf{M}=(M, \mathbb{M})$ is a rough path over $\mathbf R^{d_1+d_2}$ and goes as follows
\begin{equation} \label{mixRP}
	M_t=\begin{pmatrix} B_t \\ W_t  \end{pmatrix},   ~~~~\mathbb M_{s,t}=\begin{pmatrix} \mathbb{B}_{s,t} &\displaystyle I[B,W]_{s,t}  \\ \displaystyle I[W,B]_{s,t} & \mathbb W_{s,t} \end{pmatrix},
\end{equation}
where $\mathbf B=(B, \mathbb B) \in \mathscr{C}^{\alpha}([0,T]; \mathbf R^{d_1})$, $\mathbf W=(W, \mathbb W) \in \mathscr{C}^{\alpha}([0,T]; \mathbf R^{d_2})$ and $I[B,W]$ takes values in $\mathbf R^{d_1 \times d_2}$.
Roughly speaking, $I[B,W]_{s,t}$ plays the role of the iterated integral $\int_s^t B_{s,u} \otimes \mathrm{d} W_u$. And $I[W,B]$ is interpreted in a similar way.

For $\gamma \in \mathbf R$,
let $\mathscr H_\gamma:=\mathcal{H}_\gamma \times \mathcal{H}_\gamma$ be the product Hilbert space. For any $u=(u_1,u_2),v=(v_1,v_2)\in\mathscr{H}_\gamma$, we denote the scalar product and the induced norm by
$$\langle u,v \rangle_{\mathscr{H}_\gamma}=\langle u_1,v_1\rangle_{\mathcal{H}_\gamma}+\langle u_2,
v_2\rangle_{\mathcal{H}_\gamma},~~~~|u|_{\mathscr{H}_\gamma}=\sqrt{\langle u,u\rangle_{\mathscr{H}_\gamma}}=\sqrt{|u_1|_{\mathcal{H}_\gamma}^2+|u_2|_{\mathcal{H}_\gamma}^2}.$$
Let $Z^\varepsilon=(X^\varepsilon, Y^\varepsilon)^\top \in \mathscr H_\gamma$. We write
\begin{align*}
	\mathcal S_t&:=\text{diag} \big( S_t ,S_{t/\varepsilon} \big),\\
	\mathcal F(Z_s^\varepsilon)&:= \left( F_1(X_s^\varepsilon, Y_s^\varepsilon), \frac 1 \varepsilon  F_2 (X_s^\varepsilon, Y_s^\varepsilon)\right)^\top, \\
	\mathcal G(Z_s^\varepsilon)&:= \text{diag} \left( G_1(X_s^\varepsilon), \frac 1 {\sqrt{\varepsilon}}  G_2 (X_s^\varepsilon, Y_s^\varepsilon)\right).
\end{align*}
With the above preparations, the slow--fast system (\ref{eq1}) of rough PDEs can be reformulated in a deterministic sense as follows:
\begin{equation} \label{eq2}
Z_t^\varepsilon= \mathcal S_t Z_0^\varepsilon+\int_0^t \mathcal{S}_{t-s} \mathcal F(Z_s^\varepsilon) \mathrm{d} s+\int_0^t \mathcal{S}_{t-s} \mathcal G(Z_s^\varepsilon) \mathrm{d} \mathbf M_s,  ~~~~(Z^\varepsilon)_t^\prime=\mathcal G(Z_t^\varepsilon),
\end{equation}
where $Z_0^\varepsilon=(x, y)^\top$.

Under assumptions $(\mathbf{H1})$--$(\mathbf{H3})$,
it can be proved by the concatenation procedure that Eq. (\ref{eq2}) has a unique global solution given by $(Z^\varepsilon, \mathcal G(Z^\varepsilon)) \in \mathcal{D}_{M}^{2\alpha}([0,T];\mathscr{H}_\gamma)$,
which is controlled by $\mathbf M=(M, \mathbb M) \in \mathscr{C}^\alpha ([0,T]; \mathbf R^{d_1+d_2})$ for $\alpha \in (1/3, \alpha_0)$, see \cite[Theorem 3.2]{HN22}.

The following result come from \cite[Lemma 3.1]{LLPX25}, the details of the proof are omitted.

\begin{lemma}
Let $(Z^\varepsilon, \mathcal G(Z^\varepsilon)) \in \mathcal{D}_{M}^{2\alpha}([0,T];\mathscr{H}_\gamma)$ be a unique solution of Eq. (\ref{eq2}).
Then $(X^\varepsilon, G_1(X^\varepsilon))$ belongs to $\mathcal D_{B}^{2\alpha}([0,T];\mathcal{H}_\gamma)$ and is a unique solution of the following rough PDE driven by $\mathbf B=(B,\mathbb B)$:
\begin{equation}\label{sloweq}
X_t^\varepsilon=S_t x+\int_0^t S_{t-s} F_1 (X_s^\varepsilon,Y_s^\varepsilon) \mathrm{d} s+ \int_0^t S_{t-s} G_1 (X_s^\varepsilon) \mathrm{d} \mathbf{B}_s, ~~~~(X^\varepsilon)'_t=G_1 (X_t^\varepsilon).
\end{equation}
\end{lemma}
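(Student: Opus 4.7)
The plan is to extract the first-coordinate equation from the joint controlled rough path solution $(Z^\varepsilon,\mathcal G(Z^\varepsilon))$ by exploiting the block-diagonal structure of $\mathcal G$ and the canonical decomposition of the mixed rough path $\mathbf M$. I would first verify that $(X^\varepsilon, G_1(X^\varepsilon))\in\mathcal D_{B,\gamma}^{2\alpha}$: since $\mathcal G(Z)=\text{diag}\bigl(G_1(X),\tfrac{1}{\sqrt\varepsilon}G_2(X,Y)\bigr)$ and $M_t=(B_t,W_t)^{\top}$, we have
\[
\mathcal G(Z^\varepsilon_s)M_{s,t}=\bigl(G_1(X^\varepsilon_s)B_{s,t},\,\tfrac{1}{\sqrt\varepsilon}G_2(X^\varepsilon_s,Y^\varepsilon_s)W_{s,t}\bigr)^{\top},
\]
so the first coordinate of the joint remainder $R^{Z^\varepsilon}_{s,t}=Z^\varepsilon_{s,t}-\mathcal G(Z^\varepsilon_s)M_{s,t}$ is exactly $R^{X^\varepsilon}_{s,t}=X^\varepsilon_{s,t}-G_1(X^\varepsilon_s)B_{s,t}$, and therefore inherits the required $\alpha$- and $2\alpha$-H\"older regularity in $\mathcal H_{\gamma-\alpha}$ and $\mathcal H_{\gamma-2\alpha}$ from the hypothesis that $(Z^\varepsilon,\mathcal G(Z^\varepsilon))\in\mathcal D_M^{2\alpha}([0,T];\mathscr H_{\gamma})$. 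Together with ($\mathbf{H2}$), which delivers the requisite time and spatial regularity of $G_1(X^\varepsilon)$, this produces the controlled rough path structure with Gubinelli derivative $G_1(X^\varepsilon)$.

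Next, I would identify the first coordinate of the rough convolution integral. Using the compensated Riemann sums from Lemma \ref{RoughInt},
\[
\int_0^t \mathcal S_{t-s}\mathcal G(Z^\varepsilon_s)\,\mathrm d\mathbf M_s=\lim_{|\pi|\to 0}\sum_{[u,v]\in\pi}\mathcal S_{t-u}\bigl(\mathcal G(Z^\varepsilon_u)M_{u,v}+(\mathcal G(Z^\varepsilon))'_u\mathbb M_{u,v}\bigr),
\]
with $(\mathcal G(Z^\varepsilon))'_u=D\mathcal G(Z^\varepsilon_u)\mathcal G(Z^\varepsilon_u)$. Because $G_1$ depends only on $X$, a direct computation shows that the first coordinate of the composed tensor $D\mathcal G(Z^\varepsilon_u)\mathcal G(Z^\varepsilon_u)$, paired with $\mathbb M_{u,v}$, collapses to $DG_1(X^\varepsilon_u)G_1(X^\varepsilon_u)\mathbb B_{u,v}$: the cross contributions involving $I[B,W]_{u,v}$, $I[W,B]_{u,v}$, and the $\mathbb W_{u,v}$ block are all annihilated because they are acted on by zero sub-blocks of $D\mathcal G(Z)\mathcal G(Z)$. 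Hence each summand's first coordinate is exactly $S_{t-u}\bigl(G_1(X^\varepsilon_u)B_{u,v}+DG_1(X^\varepsilon_u)G_1(X^\varepsilon_u)\mathbb B_{u,v}\bigr)$, which is precisely the Riemann sum defining $\int_0^t S_{t-s}G_1(X^\varepsilon_s)\,\mathrm d\mathbf B_s$ according to (\ref{int}). Projecting the deterministic identity (\ref{eq2}) onto its first coordinate then yields (\ref{sloweq}).

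Finally, for uniqueness: with $Y^\varepsilon$ regarded as a fixed inhomogeneous input inherited from the joint solution, (\ref{sloweq}) is a standard semilinear controlled rough PDE in $X^\varepsilon$ alone, whose well-posedness follows from a contraction argument along the lines of \cite[Theorem 3.2]{HN22}; alternatively and more conceptually, if $\widetilde X^\varepsilon$ were a second $\mathcal D_{B,\gamma}^{2\alpha}$-solution of (\ref{sloweq}), then $(\widetilde X^\varepsilon,Y^\varepsilon)$ together with $\mathcal G(\widetilde X^\varepsilon,Y^\varepsilon)$ would produce a second $\mathcal D_M^{2\alpha}$-solution of (\ref{eq2}), contradicting the assumed uniqueness of $Z^\varepsilon$. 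The main technical subtlety lies in the second step: one must carefully verify that the Gubinelli derivative of $\mathcal G(Z^\varepsilon)$ inherited from the joint controlled rough path framework, when restricted to its first coordinate, coincides with the natural Gubinelli derivative $DG_1(X^\varepsilon)G_1(X^\varepsilon)$ of $G_1(X^\varepsilon)$ viewed as a $\mathbf B$-controlled path. This is where the block-diagonal form of $\mathcal G$, and the fact that the $X$-component of $(Z^\varepsilon)'=\mathcal G(Z^\varepsilon)$ has only a $B$-channel, become indispensable.
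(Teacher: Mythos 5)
The paper itself gives no proof of this lemma; it simply defers to \cite[Lemma 3.1]{LLPX25}, so there is no in-paper argument to compare against. Your projection argument is the natural (and presumably the cited) route, and its core is correct: since $\mathcal G(Z^\varepsilon)$ is block-diagonal and the $X$-component of $(Z^\varepsilon)'=\mathcal G(Z^\varepsilon)$ only has a $B$-channel, the first coordinate of the joint remainder is exactly $X^\varepsilon_{s,t}-G_1(X^\varepsilon_s)B_{s,t}$, so $(X^\varepsilon,G_1(X^\varepsilon))\in\mathcal D_{B,\gamma}^{2\alpha}$; and in the compensated Riemann sums the pairing of $D\mathcal G(Z^\varepsilon_u)\mathcal G(Z^\varepsilon_u)$ with $\mathbb M_{u,v}$ does annihilate the $I[B,W]$, $I[W,B]$ and $\mathbb W$ blocks in the first coordinate, leaving $S_{t-u}\bigl(G_1(X^\varepsilon_u)B_{u,v}+DG_1(X^\varepsilon_u)G_1(X^\varepsilon_u)\mathbb B_{u,v}\bigr)$, whose limit is the $\mathbf B$-rough convolution by Lemma \ref{RoughInt}; projecting (\ref{eq2}) then gives (\ref{sloweq}). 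For uniqueness, your primary route (freeze $Y^\varepsilon$ as a continuous, bounded, Lipschitz-in-$x$ inhomogeneity and run the contraction argument of \cite[Theorem 3.2]{HN22}) is the right one and suffices.

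One caveat: your ``more conceptual'' alternative for uniqueness does not work as stated. If $\widetilde X^\varepsilon\neq X^\varepsilon$ were a second solution of (\ref{sloweq}) with the \emph{same} $Y^\varepsilon$, the pair $(\widetilde X^\varepsilon,Y^\varepsilon)$ would in general \emph{not} solve (\ref{eq2}): the fast equation in (\ref{eq2}) requires the coefficients $F_2$ and $G_2$ to be evaluated at $\widetilde X^\varepsilon_s$, whereas $Y^\varepsilon$ was constructed from $F_2(X^\varepsilon_s,\cdot)$, $G_2(X^\varepsilon_s,\cdot)$. So you cannot contradict uniqueness of $Z^\varepsilon$ this way; delete that remark and rely on the contraction argument, which is what the statement's ``unique solution of (\ref{sloweq})'' (with $Y^\varepsilon$ given) actually asserts.
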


Below we give some a priori estimates for rough PDEs (\ref{sloweq}), which will be used frequently in the sequel.

\begin{lemma}\label{funCRP}
	Let $G_1$ satisfies the condition $(\mathbf{H2})$, and let $(X^\varepsilon, G_1(X^\varepsilon)) \in \mathcal D_{B}^{2\alpha}([0,T];\mathcal{H}_\gamma)$ be a solution to Eq. (\ref{sloweq}). 	
	Then $(G_1(X^\varepsilon),DG_1(X^\varepsilon)G_1(X^\varepsilon))\in \mathcal{D}_{ B}^{2\alpha}([0,T];\mathcal{H}_{\gamma-\sigma}^{d_1})$ and satisfies the following bound:
	$$\|G_1(X^\varepsilon),DG_1(X^\varepsilon)G_1(X^\varepsilon)\|_{\mathcal{D}_{B,\gamma-\sigma}^{2\alpha} }
	\lesssim (1+\varrho_{\alpha}(\mathbf{B}))^2 \left(1+\|X^\varepsilon,G_1(X^\varepsilon)\|_{\mathcal{D}_{B,\gamma}^{2\alpha}} \right).$$
\end{lemma}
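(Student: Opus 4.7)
Plan: The statement has two parts — identifying the Gubinelli derivative and bounding the five seminorms making up $\|\cdot,\cdot\|_{\mathcal{D}_{B,\gamma-\sigma}^{2\alpha}}$. By the chain rule for controlled rough paths, since $(X^\varepsilon)' = G_1(X^\varepsilon)$, the composition $G_1(X^\varepsilon)$ is itself controlled with Gubinelli derivative $DG_1(X^\varepsilon)\,G_1(X^\varepsilon)$; formally this amounts to showing that the putative remainder
\[ R^{G_1(X^\varepsilon)}_{s,t} := G_1(X^\varepsilon_t) - G_1(X^\varepsilon_s) - DG_1(X^\varepsilon_s)\,G_1(X^\varepsilon_s)\,B_{s,t}\]
possesses the required $\alpha$- and $2\alpha$-H\"older regularity in the right interpolation spaces.

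The workhorse is the second order Taylor expansion together with $X^\varepsilon_{s,t}=G_1(X^\varepsilon_s) B_{s,t}+R^{X^\varepsilon}_{s,t}$, which rearranges to
\[R^{G_1(X^\varepsilon)}_{s,t} = DG_1(X^\varepsilon_s)\,R^{X^\varepsilon}_{s,t} + \int_0^1 (1-\tau)\,D^2 G_1(X^\varepsilon_s+\tau X^\varepsilon_{s,t})(X^\varepsilon_{s,t})^{\otimes 2}\,\mathrm{d}\tau.\]
For the $|\cdot|_{2\alpha,\mathcal{H}_{\gamma-\sigma-2\alpha}}$ seminorm I apply (\ref{DG1}) with $\theta=2\alpha$ to bound the first summand by $|R^{X^\varepsilon}_{s,t}|_{\mathcal{H}_{\gamma-2\alpha}} \lesssim |t-s|^{2\alpha}|R^{X^\varepsilon}|_{2\alpha,\mathcal{H}_{\gamma-2\alpha}}$, and (\ref{D2G1}) with $\theta=\alpha$ (together with the embedding $\mathcal{H}_{\gamma-\alpha-\sigma}\hookrightarrow\mathcal{H}_{\gamma-2\alpha-\sigma}$ which holds since $\sigma<\alpha$) to bound the second summand by $|X^\varepsilon_{s,t}|_{\mathcal{H}_{\gamma-\alpha}}^2\lesssim |t-s|^{2\alpha}|X^\varepsilon|^2_{\alpha,\mathcal{H}_{\gamma-\alpha}}$. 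For the weaker $|\cdot|_{\alpha,\mathcal{H}_{\gamma-\sigma-\alpha}}$ seminorm I use the cruder decomposition $R^{G_1(X^\varepsilon)}_{s,t} = [G_1(X^\varepsilon_t)-G_1(X^\varepsilon_s)] - DG_1(X^\varepsilon_s)G_1(X^\varepsilon_s) B_{s,t}$ and estimate the first bracket via (\ref{G1}) at $\theta=\alpha$ and the second via $\mathcal{C}_b^3$-boundedness of $DG_1\,G_1$ multiplied by $|B_{s,t}|\lesssim |t-s|^\alpha |B|_\alpha$.

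The Gubinelli derivative contributions $\|DG_1(X^\varepsilon)\,G_1(X^\varepsilon)\|_{\infty,\mathcal{H}_{\gamma-\alpha-\sigma}^{d_1\times d_1}}$ and $|DG_1(X^\varepsilon)\,G_1(X^\varepsilon)|_{\alpha,\mathcal{H}_{\gamma-2\alpha-\sigma}^{d_1\times d_1}}$ are handled directly by (\ref{DG1G1}) with $\theta=\alpha$ and $\theta=2\alpha$ respectively; the latter reduces to $|X^\varepsilon|_{\alpha,\mathcal{H}_{\gamma-2\alpha}}$, which by (\ref{Y}) is controlled by $(1+|B|_\alpha)(1+\|X^\varepsilon,G_1(X^\varepsilon)\|_{\mathcal{D}_{B,\gamma}^{2\alpha}})$. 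The supremum bound on $G_1(X^\varepsilon)$ in $\mathcal{H}_{\gamma-\sigma}^{d_1}$ is immediate from the $\mathcal{C}_b^3$ assumption with $\theta=0$.

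Summing the five pieces and collecting the $(1+|B|_\alpha)$ factors from repeated use of (\ref{Y}) yields the claimed estimate with prefactor $(1+\varrho_\alpha(\mathbf{B}))^2$. The main obstacle is purely one of bookkeeping: each seminorm lives in a different interpolation space $\mathcal{H}_{\gamma-\theta-\sigma}$, so the right value of $\theta\in\{0,\alpha,2\alpha\}$ in $(\mathbf{H2})$ has to be selected case by case to match the regularity of $R^{X^\varepsilon}_{s,t}$ or $X^\varepsilon_{s,t}$ to the domain of $G_1$, $DG_1$, or $D^2G_1$; and the quadratic $|X^\varepsilon_{s,t}|^2_{\mathcal{H}_{\gamma-\alpha}}$ term arising from the $D^2G_1$ piece must be tamed by using $\mathcal{C}_b^3$-boundedness to absorb one copy of the controlled path norm into the $(1+\varrho_\alpha(\mathbf{B}))^2$ prefactor, leaving only a linear dependence on $\|X^\varepsilon,G_1(X^\varepsilon)\|_{\mathcal{D}_{B,\gamma}^{2\alpha}}$ on the right-hand side.
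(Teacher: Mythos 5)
Your overall strategy --- take $DG_1(X^\varepsilon)G_1(X^\varepsilon)$ as the Gubinelli derivative, expand the remainder by Taylor's formula using $X^\varepsilon_{s,t}=G_1(X^\varepsilon_s)B_{s,t}+R^{X^\varepsilon}_{s,t}$, and estimate the five components of the $\mathcal{D}_{B,\gamma-\sigma}^{2\alpha}$-norm by invoking $(\mathbf{H2})$ at the matching level $\theta\in\{0,\alpha,2\alpha\}$ together with (\ref{Y}) --- is exactly the route the paper intends (its proof is omitted and referred to the computations of Lemma \ref{Glem2} and to \cite{HN22,YLZ23}). The gap is at the one point of the lemma that is \emph{not} bookkeeping: the claimed linearity in $\|X^\varepsilon,G_1(X^\varepsilon)\|_{\mathcal{D}_{B,\gamma}^{2\alpha}}$. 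For the $2\alpha$-remainder you bound the second-order term by $|X^\varepsilon_{s,t}|^2\lesssim|t-s|^{2\alpha}\,|X^\varepsilon|_{\alpha}^2$ and then assert that $\mathcal{C}_b^3$-boundedness lets you ``absorb one copy of the controlled path norm into the $(1+\varrho_{\alpha}(\mathbf{B}))^2$ prefactor''. That absorption is not available. Decomposing $X^\varepsilon_{s,t}=G_1(X^\varepsilon_s)B_{s,t}+R^{X^\varepsilon}_{s,t}$ (a step your sketch does not perform), boundedness of $G_1$ does make the $G_1(X^\varepsilon_s)B_{s,t}\otimes G_1(X^\varepsilon_s)B_{s,t}$ contribution norm-free and the cross terms linear in the norm, but the contribution $D^2G_1(\cdot)\bigl(R^{X^\varepsilon}_{s,t},R^{X^\varepsilon}_{s,t}\bigr)$ yields a factor of the type $|R^{X^\varepsilon}|_{\alpha,\mathcal{H}_{\gamma-\alpha}}\,|R^{X^\varepsilon}|_{2\alpha,\mathcal{H}_{\gamma-2\alpha}}$, i.e. a quantity quadratic in $\|X^\varepsilon,G_1(X^\varepsilon)\|_{\mathcal{D}_{B,\gamma}^{2\alpha}}$. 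Since $R^{X^\varepsilon}$ is a component of the controlled-path norm and is not controlled by $\varrho_{\alpha}(\mathbf{B})$ and the structural constants of $G_1$ alone, no copy of it can be moved into a $\mathbf{B}$-dependent prefactor; as written your argument proves only a bound of the form $(1+\varrho_{\alpha}(\mathbf{B}))^2\bigl(1+\|X^\varepsilon,G_1(X^\varepsilon)\|_{\mathcal{D}_{B,\gamma}^{2\alpha}}\bigr)^2$.

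The distinction matters downstream: the linear form is precisely what Lemma \ref{slowest} exploits, where the subinterval length is fixed through a smallness condition $CT^{\alpha-\sigma}(1+\varrho_{\alpha}(\mathbf{B}))^3\leq 1/2$ that is independent of the (a priori unknown) size of the solution; with a right-hand side quadratic in the controlled-path norm this absorption argument collapses, and the whole a priori estimate (\ref{X1}) with it. To close the gap you must either reproduce the finer composition argument of the cited lemmas (\cite[Lemma 3.5]{HN22}, \cite[Lemma 3.2]{YLZ23}) or explain explicitly how the $\bigl(R^{X^\varepsilon}_{s,t}\bigr)^{\otimes 2}$ contribution is estimated with only linear dependence on the norm; simply quoting boundedness of $G_1$, $DG_1$, $D^2G_1$ does not do it. A further, minor, point: your justification of the embedding $\mathcal{H}_{\gamma-\alpha-\sigma}\hookrightarrow\mathcal{H}_{\gamma-2\alpha-\sigma}$ ``since $\sigma<\alpha$'' is off --- the embedding holds simply because $\gamma-\alpha-\sigma\geq\gamma-2\alpha-\sigma$, irrespective of $\sigma$.
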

\begin{proof}
	The proof follows by arguments analogous to those in Lemma \ref{Glem2} (see also \cite[Lemma 3.5]{HN22} or \cite[Lemma 3.2]{YLZ23}), and is therefore omitted here.
\end{proof}

	\begin{lemma}\label{slowest}	
		Let $x \in \mathcal H_\gamma$. Then there exists a constant $C_{T,\varrho_{\alpha}(\mathbf{B})}>0$ dependent on the parameters $T,\varrho_{\alpha}(\mathbf{B})$ such that	
		\begin{equation}\label{X1}
			\|X^\varepsilon,G_1(X^\varepsilon)\|_{\mathcal{D}_{B,\gamma}^{2\alpha}} \leq C_{T,\varrho_{\alpha}(\mathbf{B})} (1+|x|_{\mathcal H_\gamma}).
		\end{equation}
		Furthermore, by (\ref{norm}), it is straightforward that
		\begin{equation}\label{X2}	
		\|X^\varepsilon\|_{\infty, \mathcal H_\gamma} \leq C_{T,\varrho_{\alpha}(\mathbf{B})} (1+|x|_{\mathcal H_\gamma}).	
		\end{equation}	
	\end{lemma}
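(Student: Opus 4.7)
The plan is to derive the bound (\ref{X1}) by combining Lemma \ref{stab} for the rough convolution with Lemma \ref{funCRP} for the controlled-path lift of $G_1(X^\varepsilon)$, then to close the resulting inequality via a localization-then-concatenation scheme, since the factor $T^{\alpha-\sigma}$ appearing in (\ref{Int}) only produces a contraction on short time horizons.

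First I would decompose the mild formulation (\ref{sloweq}) as $X_t^\varepsilon = S_t x + I_t + J_t$, where $I$ is the Bochner convolution with drift $F_1(X^\varepsilon,Y^\varepsilon)$ and $J$ is the rough convolution against $\mathbf{B}$. The semigroup part $(S_\cdot x, 0)$ is handled immediately by the smoothing estimates (\ref{group1})-(\ref{group2}), which yield $\|S_\cdot x, 0\|_{\mathcal{D}_{B,\gamma}^{2\alpha}} \lesssim |x|_{\mathcal{H}_\gamma}$; the drift part $(I,0)$ uses the uniform bound (\ref{F1Bound}) together with (\ref{group1}) to give $\|I, 0\|_{\mathcal{D}_{B,\gamma}^{2\alpha}} \lesssim T^{1-2\alpha}$; and the only non-trivial Gubinelli derivative comes from $J$, which equals $G_1(X^\varepsilon)$ by Lemma \ref{stab}. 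For $J$ on a short subinterval $[0,\tau]$, Lemma \ref{funCRP} bounds the controlled norm of $(G_1(X^\varepsilon), DG_1(X^\varepsilon)G_1(X^\varepsilon))$ in $\mathcal{D}_{B,\gamma-\sigma}^{2\alpha}$ by $(1+\varrho_\alpha(\mathbf{B}))^2 (1 + \|X^\varepsilon, G_1(X^\varepsilon)\|_{\mathcal{D}_{B,\gamma}^{2\alpha}[0,\tau]})$. Then Lemma \ref{stab}, applied with smoothing parameter $\sigma$ (legal since $\sigma < \alpha/2 < \alpha$) and combined with the uniform boundedness of $G_1$ and $DG_1 \cdot G_1$ enforced by $(\mathbf{H2})$, gives
\[
\|J, G_1(X^\varepsilon)\|_{\mathcal{D}_{B,\gamma}^{2\alpha}[0,\tau]} \lesssim (1+\varrho_\alpha(\mathbf{B})) + \tau^{\alpha-\sigma}(1+\varrho_\alpha(\mathbf{B}))^3 \bigl(1 + \|X^\varepsilon, G_1(X^\varepsilon)\|_{\mathcal{D}_{B,\gamma}^{2\alpha}[0,\tau]}\bigr).
\]
Choosing $\tau = \tau(\varrho_\alpha(\mathbf{B}))$ so that the prefactor of the full norm is at most $1/2$ lets me absorb that term on the left, producing the local estimate $\|X^\varepsilon, G_1(X^\varepsilon)\|_{\mathcal{D}_{B,\gamma}^{2\alpha}[0,\tau]} \leq C(1+|x|_{\mathcal{H}_\gamma})$ with $C = C(\varrho_\alpha(\mathbf{B}))$.

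The main obstacle is the concatenation to $[0,T]$. I would partition $[0,T]$ into $N = \lceil T/\tau \rceil$ intervals $[t_k, t_{k+1}]$ of length at most $\tau$ and apply the local bound on each of them using $X_{t_k}^\varepsilon$ as the initial datum. Setting $M_k := |X_{t_k}^\varepsilon|_{\mathcal{H}_\gamma}$ and letting $a_k$ denote the $\mathcal{D}_{B,\gamma}^{2\alpha}$-norm on $[t_k, t_{k+1}]$, the relations $M_{k+1} \leq M_k + \tau^\alpha a_k$ and $a_k \leq C(1+M_k)$ telescope geometrically to $M_k \leq (1+C\tau^\alpha)^k (1+|x|_{\mathcal{H}_\gamma})$, with $N$ and $C$ depending only on $T$ and $\varrho_\alpha(\mathbf{B})$. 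Reassembling the Hölder seminorms and the remainder seminorms on $[0,T]$ by splitting increments across the grid points $t_k$ (a routine argument producing only a multiplicative loss in $N$) then yields (\ref{X1}) with the claimed constant $C_{T,\varrho_\alpha(\mathbf{B})}$, after which (\ref{X2}) is immediate from the definition (\ref{norm}), which already contains $\|X^\varepsilon\|_{\infty, \mathcal{H}_\gamma}$ as a summand.
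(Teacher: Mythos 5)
Your proposal follows essentially the same route as the paper: the same three-part decomposition of the mild formulation, the same use of Lemma \ref{funCRP} and Lemma \ref{stab} to produce a self-referential estimate that contracts once the horizon satisfies $C\tau^{\alpha-\sigma}(1+\varrho_{\alpha}(\mathbf{B}))^3\le 1/2$, and the same iteration over $N\lesssim T/\tau$ subintervals with geometrically growing constants. The only inaccuracy is the recursion $M_{k+1}\le M_k+\tau^{\alpha}a_k$, which presupposes an $\alpha$-H\"older bound of $X^\varepsilon$ at the level $\mathcal H_{\gamma}$ that the controlled norm (\ref{norm}) does not contain (it only controls H\"older increments at levels $\gamma-\alpha$ and $\gamma-2\alpha$ via (\ref{Y})); but since $\|X^\varepsilon\|_{\infty,\mathcal H_\gamma,[t_k,t_{k+1}]}$ is itself a summand of $a_k$, the simpler estimate $M_{k+1}\le a_k\le C(1+M_k)$ — which is exactly what the paper uses — yields the same geometric conclusion, so the argument closes as claimed.
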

	
	\begin{proof}
		Recall that
		\begin{equation*}
			X_t^\varepsilon=S_t x+\int_0^t S_{t-s} F_1 (X_s^\varepsilon,Y_s^\varepsilon) \mathrm{d} s+ \int_0^t S_{t-s} G_1 (X_s^\varepsilon) \mathrm{d} \mathbf{B}_s.
		\end{equation*}
		It is easy to see that
		\begin{align*}
			\|X^\varepsilon,G_1(X^\varepsilon)\|_{\mathcal{D}_{B,\gamma}^{2\alpha}}
			\leq & \|S_\cdot x, 0 \|_{\mathcal{D}_{B,\gamma}^{2\alpha}}
			+\left\|\int_0^\cdot S_{\cdot-s} F_1(X_s^\varepsilon,Y_s^\varepsilon) \mathrm{d} s, 0 \right\|_{\mathcal{D}_{B,\gamma}^{2\alpha}}\\
			&+\left\|\int_0^\cdot S_{\cdot-s} G_1(X_s^\varepsilon) \mathrm{d} \mathbf B_s, G_1(X^\varepsilon) \right\|_{\mathcal{D}_{B,\gamma}^{2\alpha}}.
		\end{align*}
		For $\|S_\cdot x, 0 \|_{\mathcal{D}_{B,\gamma}^{2\alpha}}$, by (\ref{norm}) we have
		$$\|S_\cdot x, 0\|_{\mathcal{D}_{B,\gamma}^{2\alpha}}=\|S_\cdot x\|_{\infty, \mathcal H_\gamma}+|S_\cdot x|_{\alpha,\mathcal H_{\gamma-\alpha}}+|S_\cdot x|_{2\alpha,\mathcal H_{\gamma-2\alpha}}.$$
		Clearly,
		\begin{equation*}
			\|S_\cdot x\|_{\infty, \mathcal H_\gamma}=\sup_{t \in [0,T]} |S_t x|_{\mathcal H_\gamma} \lesssim |x|_{\mathcal H_\gamma}.
		\end{equation*}	
		For $|S_\cdot x|_{\theta,\mathcal H_{\gamma-\theta}}$ with $\theta \in \{\alpha, 2\alpha\}$, using (\ref{group2}) we have
		\begin{align*}
			|S_\cdot x|_{\theta,\mathcal H_{\gamma-\theta}}
			= & \sup_{0\leq s<t \leq T}  \frac{|S_t x-S_s x|_{\mathcal H_{\gamma-\theta}}}{|t-s|^\theta}
			=\sup_{0\leq s<t \leq T}  \frac{|(S_{t-s}-\mathrm{id} )S_s x|_{\mathcal H_{\gamma-\theta}}}{|t-s|^\theta} \\
			\lesssim & \sup_{0\leq s<t \leq T}  \frac{|t-s|^\theta  |S_s x|_{\mathcal H_\gamma}}{|t-s|^\theta}
			\lesssim |x|_{\mathcal H_\gamma}.
		\end{align*}
		It is therefore easy to obtain that
		\begin{equation}\label{xest}
			\|S_\cdot x, 0\|_{\mathcal{D}_{B,\gamma}^{2\alpha}} \lesssim |x|_{\mathcal H_\gamma}.
		\end{equation}
		For $ \left\|\int_0^\cdot S_{\cdot-s} F_1(X_s^\varepsilon,Y_s^\varepsilon) \mathrm{d} s, 0 \right\|_{\mathcal{D}_{B,\gamma}^{2\alpha}}$, one has
		\begin{align*}
			\left\|\int_0^\cdot S_{\cdot-s} F_1(X_s^\varepsilon,Y_s^\varepsilon) \mathrm{d} s, 0 \right\|_{\mathcal{D}_{B,\gamma}^{2\alpha}}
			=&\left\|\int_0^\cdot S_{\cdot-s} F_1(X_s^\varepsilon,Y_s^\varepsilon) \mathrm{d} s \right\|_{\infty,\mathcal H_\gamma}
			+\left|\int_0^\cdot S_{\cdot-s} F_1(X_s^\varepsilon,Y_s^\varepsilon) \mathrm{d} s \right|_{\alpha,\mathcal H_{\gamma-\alpha}} \nonumber \\
			&+ \left|\int_0^\cdot S_{\cdot-s} F_1(X_s^\varepsilon,Y_s^\varepsilon) \mathrm{d} s  \right|_{2\alpha,\mathcal H_{\gamma-2\alpha}}.
		\end{align*}
		According to (\ref{group1}) and (\ref{F1Bound}) we have
		\begin{align}\label{F1sup}
			&\left\|\int_0^\cdot S_{\cdot-s} F_1(X_s^\varepsilon,Y_s^\varepsilon) \mathrm{d} s \right\|_{\infty,\mathcal H_\gamma}
			=\sup_{t\in [0,T]} \left| \int_0^t S_{t-s} F_1(X_s^\varepsilon,Y_s^\varepsilon)  \mathrm{d} s \right|_{\mathcal H_\gamma} \nonumber\\
			\lesssim & \sup_{t\in [0,T]} \int_0^t (t-s)^{-\alpha} |F_1(X_s^\varepsilon,Y_s^\varepsilon)|_{\mathcal H_{\gamma-\alpha}}  \mathrm{d} s
			\lesssim  \sup_{t\in [0,T]} \int_0^t  (t-s)^{-\alpha}  \mathrm{d} s
			\lesssim  T^{1-\alpha} .
		\end{align}
		For $\left| \int_0^\cdot S_{\cdot-s} F_1(X_s^\varepsilon,Y_s^\varepsilon) \mathrm{d} s \right|_{\theta,\mathcal H_{\gamma-\theta}}$ with $\theta \in \{\alpha, 2\alpha\}$, it is trivial that
		\begin{align*}
			&\int_0^t S_{t-r} F_1(X_r^\varepsilon,Y_r^\varepsilon)\mathrm{d} r-\int_0^s S_{s-r} F_1(X_r^\varepsilon,Y_r^\varepsilon)\mathrm{d} r \\
			=&\int_s^t S_{t-r} F_1(X_r^\varepsilon,Y_r^\varepsilon)\mathrm{d} r
			+(S_{t-s}-\mathrm{id} ) \int_0^s S_{s-r} F_1(X_r^\varepsilon,Y_r^\varepsilon)\mathrm{d} r.
		\end{align*}	
		Using (\ref{group1}), $\mathcal H_{\gamma-\alpha} \hookrightarrow \mathcal H_{\gamma-2\alpha}$ and (\ref{F1Bound}) one has
		\begin{align}\label{F1alp}
			\left|\int_s^t S_{t-r} F_1(X_r^\varepsilon,Y_r^\varepsilon)\mathrm{d} r \right|_{\mathcal H_{\gamma-\theta}}
			\lesssim &\int_s^t (t-r)^{\theta-2\alpha} |F_1(X_r^\varepsilon,Y_r^\varepsilon)|_{\mathcal H_{\gamma-2\alpha}} \mathrm{d} r
			\lesssim (t-s)^{1+\theta-2\alpha}.
		\end{align}
		From (\ref{group2}) and (\ref{F1sup}) we obtain
		\begin{align}\label{F12alp}
			& \left| (S_{t-s}-\mathrm{id} ) \int_0^s S_{s-r} F_1(X_r^\varepsilon,Y_r^\varepsilon)\mathrm{d} r \right|_{\mathcal H_{\gamma-\theta}} \nonumber\\
			\lesssim & |t-s|^\theta \left|\int_0^s S_{s-r} F_1(X_r^\varepsilon,Y_r^\varepsilon)\mathrm{d} r \right|_{\mathcal H_\gamma}	
			\lesssim  (t-s)^\theta T^{1-\alpha}.
		\end{align}
		Combining estimates (\ref{F1sup})--(\ref{F12alp}) we get
		\begin{equation}\label{F1res}
			\left\|\int_0^\cdot S_{\cdot-s} F_1(X_s^\varepsilon,Y_s^\varepsilon) \mathrm{d} s, 0 \right\|_{\mathcal{D}_{B,\gamma}^{2\alpha}}
			\lesssim T^{1-2\alpha}+T^{1-\alpha}.
		\end{equation}

		For $ \left\|\int_0^\cdot S_{\cdot-s} G_1(X_s^\varepsilon) \mathrm{d} \mathbf B_s, G_1(X^\varepsilon) \right\|_{\mathcal{D}_{B,\gamma}^{2\alpha}}$,
		using (\ref{Int}), (\ref{G1}), (\ref{DG1G1}) and $\mathcal H_\gamma \hookrightarrow \mathcal H_{\gamma-\alpha} $ we have
		\begin{align}\label{G1Int}
			&\left\|\int_0^\cdot S_{\cdot-s} G_1(X_s^\varepsilon) \mathrm{d} \mathbf B_s, G_1(X^\varepsilon) \right\|_{\mathcal{D}_{B,\gamma}^{2\alpha}} \nonumber\\
			\lesssim & T^{\alpha-\sigma} \left(1+\varrho_{\alpha}(\mathbf{B}) \right) \|G_1(X^\varepsilon),DG_1(X^\varepsilon)G_1(X^\varepsilon)\|_{\mathcal{D}_{B,\gamma-\sigma}^{2\alpha}} \nonumber\\
			&+(1+\varrho_{\alpha}(\mathbf{B})) \left(|G_1(x)|_{\mathcal H_{\gamma-\sigma}^{d_1}}+|DG_1(x)G_1(x)|_{\mathcal H_{\gamma-\alpha-\sigma}^{d_1 \times d_1}} \right) \nonumber\\
			\lesssim & 	(1+\varrho_{\alpha}(\mathbf{B})) \left(1+|x|_{\mathcal H_{\gamma}} \right)
			+T^{\alpha-\sigma} \left(1+\varrho_{\alpha}(\mathbf{B}) \right) \|G_1(X^\varepsilon),DG_1(X^\varepsilon)G_1(X^\varepsilon)\|_{\mathcal{D}_{B,\gamma-\sigma}^{2\alpha}} .
		\end{align}
		Finally, combining estimates (\ref{xest}), (\ref{F1res}) and (\ref{G1Int}), we obtain
		\begin{align*}
			& \|X^\varepsilon,G_1(X^\varepsilon)\|_{\mathcal{D}_{B,\gamma}^{2\alpha}} \nonumber\\
			\lesssim & (1+\varrho_{\alpha}(\mathbf{B}))(1+|x|_{\mathcal H_\gamma})+T^{1-2\alpha}+T^{1-\alpha}
			+T^{\alpha-\sigma} \left(1+\varrho_{\alpha}(\mathbf{B}) \right) \|G_1(X^\varepsilon),DG_1(X^\varepsilon)G_1(X^\varepsilon)\|_{\mathcal{D}_{B,\gamma-\sigma}^{2\alpha}} \nonumber\\
			\leq & C_{T} (1+\varrho_{\alpha}(\mathbf{B})) (1+|x|_{\mathcal H_\gamma})+CT^{\alpha-\sigma} (1+\varrho_{\alpha}(\mathbf{B}))^3 \left(1+\|X^\varepsilon,G_1(X^\varepsilon)\|_{\mathcal{D}_{B,\gamma}^{2\alpha}} \right) \nonumber\\
			\leq & C_{T} (1+\varrho_{\alpha}(\mathbf{B}))^3  (1+|x|_{\mathcal H_\gamma})
			+CT^{\alpha-\sigma}  (1+\varrho_{\alpha}(\mathbf{B}))^3  \|X^\varepsilon,G_1(X^\varepsilon)\|_{\mathcal{D}_{B,\gamma}^{2\alpha}},
		\end{align*}
		where for the second inequality we used Lemma \ref{funCRP}.
		Assuming that the time horizon $T\in (0,1]$ is small enough such that $C T^{\alpha-\sigma} (1+\varrho_{\alpha}(\mathbf{B}))^3 \leq 1/2$, we directly deduce the following estimate:
		\begin{equation*}
			\|X^\varepsilon,G_1(X^\varepsilon)\|_{\mathcal{D}_{B,\gamma}^{2\alpha}} \leq  C_{T,\varrho_{\alpha}(\mathbf{B}) } (1+|x|_{\mathcal H_\gamma}).
		\end{equation*}

		For any $T>0$, we divide the time interval $[0,T]$ into $N \in \mathbf N$ identical subintervals such that $   1/4 < C {(\frac T N)}^{\alpha-\sigma} (1+\varrho_{\alpha}(\mathbf{B}))^3 \leq 1/2$,
		then on each subinterval $\left[\frac kN T,\frac {k+1}N T \right]$ with $k=0, \cdots , N-1$, we have
		\begin{equation}\label{normk}
			\|X^\varepsilon,G_1(X^\varepsilon)\|_{\mathcal{D}_{B,\gamma}^{2\alpha}([\frac kN T,\frac {k+1}N T])}
			\leq  C_{T,\varrho_{\alpha}(\mathbf{B}) } \left(1+ \left|X^\varepsilon_{\frac kN T} \right|_{\mathcal H_\gamma} \right).
		\end{equation}
		Therefore, for $k=0$, according to (\ref{normk}) we obtain
		\begin{equation*}
			\|X^\varepsilon,G_1(X^\varepsilon)\|_{\mathcal{D}_{B,\gamma}^{2\alpha}([0,\frac TN])}
			\leq C_{T,\varrho_{\alpha}(\mathbf{B}) } (1+|x|_{\mathcal H_\gamma}).
		\end{equation*}
		For $k=1$ we have
		\begin{align*}
			\|X^\varepsilon,G_1(X^\varepsilon)\|_{\mathcal{D}_{B,\gamma}^{2\alpha}([\frac TN,\frac {2T}N ])  }
			\leq & C_{T,\varrho_{\alpha}(\mathbf{B}) } \left(1+\left|X_{\frac TN}^\varepsilon \right|_{\mathcal H_\gamma} \right)
			\leq C_{T,\varrho_{\alpha}(\mathbf{B}) } \left( 1+\|X^\varepsilon\|_{\infty,\mathcal H_\gamma, [0,\frac TN]} \right) \nonumber\\
			\leq & C_{T,\varrho_{\alpha}(\mathbf{B}) } \left(1+\|X^\varepsilon,G_1(X^\varepsilon)\|_{\mathcal{D}_{B,\gamma}^{2\alpha}([0,\frac TN])} \right)
			\leq 2(C_{T,\varrho_{\alpha}(\mathbf{B}) })^2 (1+|x|_{\mathcal H_\gamma}).
		\end{align*}
		Using (\ref{normk}) recursively, we can conclude that
		\begin{equation}\label{xk}
			\|X^\varepsilon,G_1(X^\varepsilon)\|_{\mathcal{D}_{B,\gamma}^{2\alpha}([\frac kN T,\frac {k+1}N T])}
			\leq 2^k(C_{T,\varrho_{\alpha}(\mathbf{B})})^{k+1} (1+|x|_{\mathcal H_\gamma}), ~~~~\text{for any}~ k=0,\cdots, N-1.
		\end{equation}
		Therefore, it is easy to obtain that
		\begin{equation*}
			\|X^\varepsilon,G_1(X^\varepsilon)\|_{\mathcal{D}_{B,\gamma}^{2\alpha}([0,T])}
			\leq  \max_{k\in \{0, \cdots,N-1\}} \|X^\varepsilon,G_1(X^\varepsilon)\|_{\mathcal{D}_{B,\gamma}^{2\alpha}([\frac kN T,\frac {k+1}N T])}
			\leq 2^{N-1} (C_{T,\varrho_{\alpha}(\mathbf{B})})^{N} (1+|x|_{\mathcal H_\gamma}) .
		\end{equation*}
		Finally, using $  C {(\frac T N)}^{\alpha-\sigma} (1+\varrho_{\alpha}(\mathbf{B}))^3>1/4 $ we can bound
		$N < T (4 C (1+\varrho_{\alpha}(\mathbf{B}))^3)^{ \frac 1{\alpha-\sigma}}$.
		Thus we obtain the claim that
			\begin{equation*}
			\|X^\varepsilon,G_1(X^\varepsilon)\|_{\mathcal{D}_{B,\gamma}^{2\alpha}} \leq  C_{T,\varrho_{\alpha}(\mathbf{B}) } (1+|x|_{\mathcal H_\gamma}).
		\end{equation*}
	The proof is completed.
	\end{proof}

The following estimates for the solution to the averaged equation (\ref{aveq}) follow by arguments similar to those in the proof of Lemma \ref{slowest}. We therefore omit the details.

\begin{lemma}\label{barXest}
	Let $ x \in \mathcal{H}_\gamma $. Then the averaged equation (\ref{aveq}) admits a unique mild solution $\bar{X}_t$ such that for all $t \in [0,T]$, we have
	\begin{equation*}
	\bar{X}_t = S_t x + \int_0^t S_{t-s} \bar{F}_1(\bar{X}_s) \mathrm{d} s + \int_0^t S_{t-s} G_1(\bar{X}_s)  \mathrm{d} \mathbf B_s, ~~~~(\bar{X})_t^\prime=G_1(\bar{X}_t).	
	\end{equation*}
	Moreover, there exists a constant $ C_{T,\varrho_{\alpha}(\mathbf{B})} > 0 $ such that
	\begin{equation*}
		\|\bar{X},G_1(\bar{X})\|_{\mathcal{D}_{B,\gamma}^{2\alpha}} \leq  C_{T,\varrho_{\alpha}(\mathbf{B})} (1+|x|_{\mathcal H_\gamma}) .
	\end{equation*}
\end{lemma}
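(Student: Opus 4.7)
The plan is to proceed exactly as in the proof of Lemma \ref{slowest}, since the averaged equation (\ref{aveq}) has the same structural form as the slow equation (\ref{sloweq}). The only substantive difference is that the drift $\bar F_1$ replaces $F_1(X^\varepsilon,Y^\varepsilon)$, so I first need to record the two properties of $\bar F_1$ that make the earlier argument go through verbatim: Lipschitz continuity $\mathcal H_\gamma \to \mathcal H_{\gamma-\alpha}$, which is already established in (\ref{Fbar}), and uniform boundedness $\|\bar F_1\|_{\infty,\mathcal H_{\gamma-\alpha}}\leq \|F_1\|_{\infty,\mathcal H_{\gamma-\alpha}}$, which follows immediately by integrating the bound (\ref{F1Bound}) against the probability measure $\mu^x(\mathrm{d} y)$. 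Given these two properties, existence and uniqueness of the controlled rough path solution $(\bar X, G_1(\bar X))\in \mathcal D_B^{2\alpha}([0,T];\mathcal H_\gamma)$ follow directly from \cite[Theorem 3.2]{HN22}, which is already cited just above in the paragraph containing (\ref{aveq}).

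For the a priori bound, I would split the controlled-path norm along the three parts of the mild representation:
\begin{align*}
\|\bar X,G_1(\bar X)\|_{\mathcal{D}_{B,\gamma}^{2\alpha}}
&\leq \|S_\cdot x,0\|_{\mathcal{D}_{B,\gamma}^{2\alpha}}
+\left\|\int_0^\cdot S_{\cdot-s}\bar F_1(\bar X_s)\,\mathrm{d} s,\,0\right\|_{\mathcal{D}_{B,\gamma}^{2\alpha}} \\
&\quad + \left\|\int_0^\cdot S_{\cdot-s}G_1(\bar X_s)\,\mathrm{d}\mathbf B_s,\,G_1(\bar X)\right\|_{\mathcal{D}_{B,\gamma}^{2\alpha}}.
\end{align*}
The first term is $\lesssim |x|_{\mathcal H_\gamma}$ exactly as in (\ref{xest}), using only the semigroup bound (\ref{group2}). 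The second term is bounded by $T^{1-2\alpha}+T^{1-\alpha}$ by repeating (\ref{F1sup})--(\ref{F12alp}) with $F_1(X^\varepsilon,Y^\varepsilon)$ replaced by $\bar F_1(\bar X)$, since only the uniform $\mathcal H_{\gamma-\alpha}$-bound on the integrand is used. The third term is handled by Lemma \ref{stab} together with the analog of Lemma \ref{funCRP} for $\bar X$ (whose proof transcribes verbatim because it uses only the $\mathcal C_b^3$ regularity of $G_1$), giving a contribution of the form $(1+\varrho_\alpha(\mathbf B))(1+|x|_{\mathcal H_\gamma})+T^{\alpha-\sigma}(1+\varrho_\alpha(\mathbf B))^3(1+\|\bar X,G_1(\bar X)\|_{\mathcal{D}_{B,\gamma}^{2\alpha}})$.

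Combining these three estimates yields an inequality of the form
\begin{equation*}
\|\bar X,G_1(\bar X)\|_{\mathcal{D}_{B,\gamma}^{2\alpha}}
\leq C_T(1+\varrho_\alpha(\mathbf B))^3(1+|x|_{\mathcal H_\gamma})
+CT^{\alpha-\sigma}(1+\varrho_\alpha(\mathbf B))^3\|\bar X,G_1(\bar X)\|_{\mathcal{D}_{B,\gamma}^{2\alpha}}.
\end{equation*}
For $T$ small enough that $CT^{\alpha-\sigma}(1+\varrho_\alpha(\mathbf B))^3\leq 1/2$, the last term can be absorbed into the left-hand side and gives the claim on a short interval. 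Extension to arbitrary $T>0$ then proceeds by the concatenation procedure exactly as in (\ref{normk})--(\ref{xk}): partition $[0,T]$ into $N\lesssim T(4C(1+\varrho_\alpha(\mathbf B))^3)^{1/(\alpha-\sigma)}$ equal subintervals, apply the short-time estimate on each, and iterate. I do not anticipate a genuine obstacle here; the only place the averaged equation genuinely differs from (\ref{sloweq}) is the verification that $\bar F_1$ inherits uniform boundedness from $F_1$, and this is immediate from the fact that $\mu^x$ is a probability measure.
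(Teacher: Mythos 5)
Your proposal is correct and coincides with the paper's (omitted) argument: the paper itself states that Lemma \ref{barXest} follows by repeating the proof of Lemma \ref{slowest}, which is exactly what you do, and your two preparatory observations — the Lipschitz bound (\ref{Fbar}) and the inheritance $\|\bar F_1\|_{\infty,\mathcal H_{\gamma-\alpha}}\leq\|F_1\|_{\infty,\mathcal H_{\gamma-\alpha}}$ from integrating (\ref{F1Bound}) against the probability measure $\mu^x$ — are precisely the ingredients needed for that transcription to go through. No gap.
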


A key step in the proof of Theorem \ref{MainRes} is to obtain a bound on
$\|G_1(X^\varepsilon)-G_1(\bar{X}),DG_1(X^\varepsilon)G_1(X^\varepsilon)-DG_1(\bar{X})G_1(\bar{X})\|_{\mathcal{D}_{B,\gamma-\sigma}^{2\alpha} }$.

\begin{lemma}\label{Glem2}
Suppose that $G_1$ satisfies assumption $(\mathbf{H2})$.
Let $(X^\varepsilon,G_1(X^\varepsilon)) \in \mathcal{D}_{B}^{2\alpha}([0,T];\mathcal H_\gamma)$ be a solution to Eq. (\ref{sloweq}), and let $(\bar{X},G_1(\bar{X})) \in \mathcal{D}_{B}^{2\alpha}([0,T];\mathcal H_\gamma)$ satisfies Eq. (\ref{aveq}). Then $(G_1(X^\varepsilon),DG_1(X^\varepsilon)G_1(X^\varepsilon)) \in \mathcal{D}_{B}^{2\alpha}([0,T];\mathcal H_{\gamma-\sigma}^{d_1}), (G_1(\bar{X}),DG_1(\bar{X})G_1(\bar{X})) \in \mathcal{D}_{B}^{2\alpha}([0,T];\mathcal H_{\gamma-\sigma}^{d_1})$ and the following estimate holds:
\begin{align}\label{DGG}
	& \|G_1(X^\varepsilon)-G_1(\bar{X}),DG_1(X^\varepsilon)G_1(X^\varepsilon)-DG_1(\bar{X})G_1(\bar{X})\|_{\mathcal{D}_{B,\gamma-\sigma}^{2\alpha} }  \nonumber\\
	\leq & C_{T,\varrho_{\alpha}(\mathbf{B})} (1+|x|_{\mathcal H_\gamma})^2 \|X^\varepsilon-\bar{X},G_1(X^\varepsilon)-G_1(\bar{X}) \|_{\mathcal{D}_{B,\gamma}^{2\alpha}}.
\end{align}
\end{lemma}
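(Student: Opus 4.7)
Setting $Z := X^\varepsilon - \bar X$ and $Z' := G_1(X^\varepsilon) - G_1(\bar X)$, I observe that $(Z, Z')$ is itself a controlled rough path with $R^Z = R^{X^\varepsilon} - R^{\bar X}$, so every seminorm of $Z$ appearing below is dominated by $\|Z, Z'\|_{\mathcal D^{2\alpha}_{B,\gamma}}$ via (\ref{Y}); Lemmas \ref{slowest} and \ref{barXest} will supply the outer factor $(1+|x|_{\mathcal H_\gamma})^2$. The plan is to estimate in turn the five seminorms comprising $\|\cdot,\cdot\|_{\mathcal D^{2\alpha}_{B,\gamma-\sigma}}$ (see (\ref{norm})). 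The two supremum seminorms are immediate: (\ref{G1}) with $\theta = 0$ yields $\|Z'\|_{\infty,\mathcal H_{\gamma-\sigma}} \leq C\|Z\|_{\infty,\mathcal H_\gamma}$, and (\ref{DG1G1}) with $\theta = \alpha$ yields $\|DG_1(X^\varepsilon)G_1(X^\varepsilon) - DG_1(\bar X)G_1(\bar X)\|_{\infty,\mathcal H^{d_1 \times d_1}_{\gamma-\alpha-\sigma}} \leq C\|Z\|_{\infty,\mathcal H_{\gamma-\alpha}}$.

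For the H\"{o}lder seminorm of the Gubinelli derivative, write $\Phi := DG_1\cdot G_1$; by (\ref{G1})--(\ref{D2G1}), $\Phi$ is $\mathcal C^1$ with bounded and Lipschitz derivative $D\Phi$ between the relevant interpolation spaces. The mean value theorem applied in the $Z$-direction gives
\begin{equation*}
\Phi(X^\varepsilon_t) - \Phi(\bar X_t) - \Phi(X^\varepsilon_s) + \Phi(\bar X_s) = \int_0^1 [D\Phi(\bar X_t + \theta Z_t) - D\Phi(\bar X_s + \theta Z_s)](Z_t)\,d\theta + \int_0^1 D\Phi(\bar X_s + \theta Z_s)(Z_{s,t})\,d\theta.
\end{equation*}
The first integrand is $O((|\bar X_{s,t}|_{\mathcal H_{\gamma-2\alpha}} + |Z_{s,t}|_{\mathcal H_{\gamma-2\alpha}})\|Z\|_{\infty,\mathcal H_{\gamma-2\alpha}})$ by Lipschitz continuity of $D\Phi$, and the second is $O(|Z_{s,t}|_{\mathcal H_{\gamma-2\alpha}})$ by boundedness. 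Using (\ref{Y}) to convert increments into Gubinelli-derivative norms, together with Lemma \ref{barXest}, each contribution is dominated by $C_{T,\varrho_\alpha(\mathbf B)}(1+|x|_{\mathcal H_\gamma})\|Z,Z'\|_{\mathcal D^{2\alpha}_{B,\gamma}}|t-s|^\alpha$.

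For the two remainder seminorms, I combine a second-order Taylor expansion with the controlled-rough-path identity $X^\varepsilon_{s,t} = G_1(X^\varepsilon_s)B_{s,t} + R^{X^\varepsilon}_{s,t}$ to obtain
\begin{equation*}
R^{G_1(X^\varepsilon)}_{s,t} = DG_1(X^\varepsilon_s) R^{X^\varepsilon}_{s,t} + \int_0^1 (1-\theta) D^2G_1(X^\varepsilon_s + \theta X^\varepsilon_{s,t})(X^\varepsilon_{s,t}, X^\varepsilon_{s,t})\,d\theta,
\end{equation*}
and the analogous identity for $\bar X$. Subtracting and rewriting via the standard two-factor decompositions
\begin{align*}
DG_1(X^\varepsilon_s)R^{X^\varepsilon}_{s,t} - DG_1(\bar X_s)R^{\bar X}_{s,t} &= DG_1(X^\varepsilon_s)R^Z_{s,t} + [DG_1(X^\varepsilon_s) - DG_1(\bar X_s)]R^{\bar X}_{s,t},\\
D^2G_1(A)(u,u) - D^2G_1(B)(v,v) &= D^2G_1(A)(u-v, u) + D^2G_1(A)(v, u-v) + [D^2G_1(A) - D^2G_1(B)](v,v),
\end{align*}
(with $A = X^\varepsilon_s + \theta X^\varepsilon_{s,t}$, $B = \bar X_s + \theta \bar X_{s,t}$, $u = X^\varepsilon_{s,t}$, $v = \bar X_{s,t}$), I reduce the remainder to a sum whose factors are each either a bounded operator between the appropriate $\mathcal H_{\gamma-\theta}$ (via (\ref{DG1})--(\ref{D2G1}) with $\theta \in \{\alpha, 2\alpha\}$), a remainder $R^Z$ or $R^{\bar X}$ carrying an $\alpha$- or $2\alpha$-H\"{o}lder power of $|t-s|$, or an increment $Z_{s,t}, X^\varepsilon_{s,t}, \bar X_{s,t}$ carrying an $\alpha$-power. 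Collecting these yields the required $\alpha$-H\"{o}lder bound in $\mathcal H_{\gamma-\alpha-\sigma}$ and $2\alpha$-H\"{o}lder bound in $\mathcal H_{\gamma-2\alpha-\sigma}$.

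The principal difficulty is bookkeeping: for each term one must select the correct $\theta \in \{0, \alpha, 2\alpha\}$ in (\textbf{H2}) so that the estimate lives in the correct interpolation space, and then use the embeddings $\mathcal H_\gamma \hookrightarrow \mathcal H_{\gamma-\alpha} \hookrightarrow \mathcal H_{\gamma-2\alpha}$ together with (\ref{Y}) to absorb every intermediate factor into $\|Z, Z'\|_{\mathcal D^{2\alpha}_{B,\gamma}}$ and into the a priori factors $(1+\varrho_\alpha(\mathbf B))$ and $(1+|x|_{\mathcal H_\gamma})^2$. Summing the five contributions then yields (\ref{DGG}).
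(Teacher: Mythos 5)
Your treatment of the two supremum seminorms and of both remainder seminorms is sound and essentially reproduces the paper's argument (the paper uses a first-order expansion at the $\alpha$-level and Taylor's formula at the $2\alpha$-level, i.e.\ its $J_1,J_2$ and $J_3,J_4$ decompositions, while you use the second-order formula at both levels -- the extra factor $|t-s|^{2\alpha}\leq T^{\alpha}|t-s|^{\alpha}$ is harmless). The gap is in your estimate of the H\"older seminorm of the Gubinelli derivative, where you set $\Phi:=DG_1\,G_1$ and assert that ``by (\ref{G1})--(\ref{D2G1}), $\Phi$ is $\mathcal C^1$ with bounded and Lipschitz derivative $D\Phi$ between the relevant interpolation spaces.'' This does not follow from $(\mathbf{H2})$. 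Formally $D\Phi(x)[h]=D^2G_1(x)[h]\,G_1(x)+DG_1(x)\bigl[DG_1(x)[h]\bigr]$, and in the second summand the inner factor $DG_1(x)[h]$ lies only in $\mathcal H^{d_1}_{\gamma-\theta-\sigma}$, a space on which $(\mathbf{H2})$ gives no action of $DG_1(x)$ at all: the hypothesis provides the mapping properties only on the three levels $\theta\in\{0,\alpha,2\alpha\}$, and $\gamma-2\alpha-\sigma$ is strictly below that grid. This is precisely why the paper does not even attempt to deduce Lipschitz continuity of $\Phi$ from (\ref{G1})--(\ref{D2G1}) but imposes it separately as assumption (\ref{DG1G1}); your argument needs the strictly stronger (and unavailable) statement that $\Phi$ is differentiable with uniformly bounded, Lipschitz derivative $\mathcal H_{\gamma-2\alpha}\to\mathcal H^{d_1\times d_1}_{\gamma-2\alpha-\sigma}$, so the two bounds you invoke (``by Lipschitz continuity of $D\Phi$'' and ``by boundedness'') are not justified.

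The repair is exactly the paper's device: never differentiate the product. Split the increment as
\begin{equation*}
\Phi(X^\varepsilon_t)-\Phi(X^\varepsilon_s)=\Bigl(\int_0^1 D^2G_1\bigl(X^\varepsilon_s+\theta X^\varepsilon_{s,t}\bigr)(X^\varepsilon_{s,t})\,\mathrm d\theta\Bigr)G_1(X^\varepsilon_t)+DG_1(X^\varepsilon_s)\bigl(G_1(X^\varepsilon_t)-G_1(X^\varepsilon_s)\bigr),
\end{equation*}
do the same for $\bar X$, and subtract. The first block only ever composes $D^2G_1$ (at level $\theta=2\alpha$) with $G_1$-values and with path increments measured in $\mathcal H_{\gamma-2\alpha}$, which $(\mathbf{H2})$ covers; in the second block the increment $G_1(X^\varepsilon_t)-G_1(X^\varepsilon_s)$, and its difference with the $\bar X$-version, are kept as increments of the controlled path $G_1(X^\varepsilon)$ rather than re-expanded, so the needed quantity $|G_1(X^\varepsilon)-G_1(\bar X)|_{\alpha,\mathcal H^{d_1}_{\gamma-2\alpha}}$ is by definition a component of $\|X^\varepsilon-\bar X,\,G_1(X^\varepsilon)-G_1(\bar X)\|_{\mathcal D^{2\alpha}_{B,\gamma}}$ (this is the paper's estimate $I_{21}$). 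With that substitution your plan goes through and the remaining bookkeeping, via (\ref{Y}), the embeddings, and Lemmas \ref{slowest} and \ref{barXest}, yields (\ref{DGG}) as you describe.
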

\begin{proof}
	It is straightforward that
	\begin{align*}
		&\|G_1(X^\varepsilon)-G_1(\bar{X}),DG_1(X^\varepsilon)G_1(X^\varepsilon)-DG_1(\bar{X})G_1(\bar{X})\|_{\mathcal{D}_{ B,\gamma-\sigma}^{2\alpha}} \\
		=&\|G_1(X^\varepsilon)-G_1(\bar{X})\|_{\infty, \mathcal H_{\gamma-\sigma}^{d_1}}	
		+\|DG_1(X^\varepsilon)G_1(X^\varepsilon)-DG_1(\bar{X})G_1(\bar{X})\|_{\infty, \mathcal H_{\gamma-\alpha-\sigma}^{d_1 \times d_1}} \\
		&+|DG_1(X^\varepsilon)G_1(X^\varepsilon)-DG_1(\bar{X})G_1(\bar{X})|_{\alpha, \mathcal H_{\gamma-2\alpha-\sigma}^{d_1 \times d_1} }
		+|R^{G_1(X^\varepsilon)-G_1(\bar{X})}|_{\alpha, \mathcal H_{\gamma-\alpha-\sigma}^{d_1}} \\
		&+|R^{G_1(X^\varepsilon)-G_1(\bar{X})}|_{2\alpha, \mathcal H_{\gamma-2\alpha-\sigma}^{d_1}}.
	\end{align*}	
	For $\|G_1(X^\varepsilon)-G_1(\bar{X})\|_{\infty, \mathcal H_{\gamma-\sigma}^{d_1}}$, by (\ref{G1}) we have
	\begin{align}\label{Gdif1}
		\|G_1(X^\varepsilon)-G_1(\bar{X})\|_{\infty, \mathcal H_{\gamma-\sigma}^{d_1}}
		=\sup_{t\in [0,T]} |G_1(X_t^\varepsilon)-G_1(\bar{X}_t)|_{\mathcal H_{\gamma-\sigma}^{d_1}}
		\leq C\|X^\varepsilon-\bar{X}\|_{\infty, \mathcal H_{\gamma}}.
	\end{align}	
	For $\|DG_1(X^\varepsilon)G_1(X^\varepsilon)-DG_1(\bar{X})G_1(\bar{X})\|_{\infty, \mathcal H_{\gamma-\alpha-\sigma}^{d_1 \times d_1}} $, by (\ref{DG1G1}) and $\mathcal H_\gamma \hookrightarrow \mathcal H_{\gamma-\alpha}$ we have
	\begin{align}\label{Gdif2}
		\|DG_1(X^\varepsilon)G_1(X^\varepsilon)-DG_1(\bar{X})G_1(\bar{X})\|_{\infty, \mathcal H_{\gamma-\alpha-\sigma}^{d_1 \times d_1}}
		\leq C\|X^\varepsilon-\bar{X}\|_{\infty, \mathcal H_{\gamma-\alpha}}
		\leq C\|X^\varepsilon-\bar{X}\|_{\infty, \mathcal H_{\gamma}}.
	\end{align}
	Notice that	
	\begin{align*}
		&DG_1(X_t^\varepsilon)G_1(X_t^\varepsilon)-DG_1(X_s^\varepsilon)G_1(X_s^\varepsilon) \nonumber\\
		=&(DG_1(X_t^\varepsilon)-DG_1(X_s^\varepsilon)) G_1(X_t^\varepsilon)
		+DG_1(X_s^\varepsilon)(G_1(X_t^\varepsilon)-G_1(X_s^\varepsilon)) \nonumber\\
		=& \int_0^1 D^2 G_1 (X_s^\varepsilon+\theta(X_t^\varepsilon-X_s^\varepsilon)) (X_t^\varepsilon-X_s^\varepsilon) \mathrm{d} \theta G_1(X_t^\varepsilon)
		+DG_1(X_s^\varepsilon)(G_1(X_t^\varepsilon)-G_1(X_s^\varepsilon)).
	\end{align*}
	Therefore, we have
	\begin{align*}
		&\big|DG_1(X_t^\varepsilon)G_1(X_t^\varepsilon)-DG_1(X_s^\varepsilon)G_1(X_s^\varepsilon)-(DG_1(\bar{X}_t)G_1(\bar{X}_t)-DG_1(\bar{X}_s)G_1(\bar{X}_s)) \big|_{\mathcal H_{\gamma-2\alpha-\sigma}^{d_1 \times d_1}}	 \nonumber\\
		=& \bigg| \int_0^1 D^2 G_1 (X_s^\varepsilon+\theta(X_t^\varepsilon-X_s^\varepsilon)) (X_t^\varepsilon-X_s^\varepsilon) \mathrm{d} \theta G_1(X_t^\varepsilon)
		+DG_1(X_s^\varepsilon)(G_1(X_t^\varepsilon)-G_1(X_s^\varepsilon)) \nonumber\\
		&-\int_0^1 D^2 G_1(\bar{X}_s+\theta(\bar{X}_t-\bar{X}_s)) (\bar{X}_t-\bar{X}_s)  \mathrm{d} \theta G_1(\bar{X}_t)
		-DG_1(\bar{X}_s)(G_1(\bar{X}_t)-G_1(\bar{X}_s)) \bigg|_{\mathcal H_{\gamma-2\alpha-\sigma}^{d_1 \times d_1} } \nonumber\\
		\leq & \bigg| \int_0^1 \big[D^2 G_1 (X_s^\varepsilon+\theta(X_t^\varepsilon-X_s^\varepsilon)) (X_t^\varepsilon-X_s^\varepsilon)G_1(X_t^\varepsilon)\\
&~~~~~-D^2 G_1(\bar{X}_s+\theta(\bar{X}_t-\bar{X}_s)) (\bar{X}_t-\bar{X}_s) G_1(\bar{X}_t) \big] \mathrm{d} \theta \bigg|_{\mathcal H_{\gamma-2\alpha-\sigma}^{d_1 \times d_1} } \nonumber\\
		&+|DG_1(X_s^\varepsilon)(G_1(X_t^\varepsilon)-G_1(X_s^\varepsilon))-DG_1(\bar{X}_s)(G_1(\bar{X}_t)-G_1(\bar{X}_s))|_{\mathcal H_{\gamma-2\alpha-\sigma}^{d_1 \times d_1} }  \nonumber\\
		=:&I_1+I_2.
	\end{align*}
	For $I_2$, it is easy to see that
	\begin{align*}
		I_2 \leq & \left|DG_1(X_s^\varepsilon)[G_1(X_t^\varepsilon)-G_1(\bar{X}_t)-(G_1(X_s^\varepsilon)-G_1(\bar{X}_s) )]  \right|_{\mathcal H_{\gamma-2\alpha-\sigma}^{d_1 \times d_1} }  \nonumber\\
		&+\left|(DG_1(X_s^\varepsilon)-DG_1(\bar{X}_s)) (G_1(\bar{X}_t)-G_1(\bar{X}_s))  \right|_{\mathcal H_{\gamma-2\alpha-\sigma}^{d_1 \times d_1} } \nonumber\\
		=:& I_{21}+I_{22}.
	\end{align*}
	For $I_{21}$, we have
	\begin{align}\label{I21}
		I_{21} \leq &|DG_1(X_s^\varepsilon)|_{\mathcal L(\mathcal H_{\gamma-2\alpha}; \mathcal H_{\gamma-2\alpha-\sigma}^{d_1 } )}
		|G_1(X_t^\varepsilon)-G_1(\bar{X}_t)-(G_1(X_s^\varepsilon)-G_1(\bar{X}_s) )|_{\mathcal H_{\gamma-2\alpha}^{d_1}} \nonumber\\
		\leq & C |t-s|^\alpha |G_1(X^\varepsilon)-G_1(\bar{X})|_{\alpha, \mathcal H_{\gamma-2\alpha}^{d_1}}.
	\end{align}
	For $I_{22}$, by (\ref{DG1}) and $\mathcal H_\gamma \hookrightarrow \mathcal H_{\gamma-2\alpha}$ we have	
	\begin{align}\label{I22}
		I_{22}=&\left|(DG_1(X_s^\varepsilon)-DG_1(\bar{X}_s)) (G_1(\bar{X}_t)-G_1(\bar{X}_s))  \right|_{\mathcal H_{\gamma-2\alpha-\sigma}^{d_1 \times d_1}} \nonumber\\
		\leq & \left|DG_1(X_s^\varepsilon)-DG_1(\bar{X}_s) \right|_{\mathcal L(\mathcal H_{\gamma-2\alpha};\mathcal H_{\gamma-2\alpha-\sigma}^{d_1 })} |G_1(\bar{X}_t)-G_1(\bar{X}_s)|_{\mathcal H_{\gamma-2\alpha}^{d_1 }} \nonumber\\
		\leq & C |X_s^\varepsilon-\bar{X}_s|_{\mathcal H_{\gamma-2\alpha}} |G_1(\bar{X})|_{\alpha, \mathcal H_{\gamma-2\alpha}^{d_1}} |t-s|^\alpha \nonumber\\
		\leq & C \|X^\varepsilon-\bar{X}\|_{\infty,\mathcal H_\gamma} \|\bar{X},G_1(\bar{X})\|_{\mathcal D_{B,\gamma}^{2\alpha}}  |t-s|^\alpha.
	\end{align}

	For $I_1$, a straightforward computation shows that
	\begin{align*}
		I_1 \leq &  \int_0^1 \big|D^2 G_1 (X_s^\varepsilon+\theta(X_t^\varepsilon-X_s^\varepsilon)) (X_t^\varepsilon-X_s^\varepsilon)G_1(X_t^\varepsilon)\\
&~~~~~-D^2 G_1(\bar{X}_s+\theta(\bar{X}_t-\bar{X}_s)) (\bar{X}_t-\bar{X}_s) G_1(\bar{X}_t) \big|_{\mathcal H_{\gamma-2\alpha-\sigma}^{d_1 \times d_1}} \mathrm{d} \theta \nonumber\\
		\leq & \int_0^1 \left|D^2 G_1 (X_s^\varepsilon+\theta(X_t^\varepsilon-X_s^\varepsilon))(X_t^\varepsilon-X_s^\varepsilon-(\bar{X}_t-\bar{X}_s)) G_1(X_t^\varepsilon) \right|_{\mathcal H_{\gamma-2\alpha-\sigma}^{d_1 \times d_1}} \mathrm{d} \theta \nonumber\\
		&+\int_0^1 \left|[D^2 G_1 (X_s^\varepsilon+\theta(X_t^\varepsilon-X_s^\varepsilon))-D^2 G_1(\bar{X}_s+\theta(\bar{X}_t-\bar{X}_s))] (\bar{X}_t-\bar{X}_s) G_1(X_t^\varepsilon) \right|_{\mathcal H_{\gamma-2\alpha-\sigma}^{d_1 \times d_1}} \mathrm{d} \theta \nonumber\\
		&+\int_0^1 \left| D^2 G_1(\bar{X}_s+\theta(\bar{X}_t-\bar{X}_s)) (\bar{X}_t-\bar{X}_s) (G_1(X_t^\varepsilon)-G_1(\bar{X}_t)) \right|_{\mathcal H_{\gamma-2\alpha-\sigma}^{d_1 \times d_1}} \mathrm{d} \theta \nonumber\\
		=: & I_{11}+I_{12}+I_{13}.
	\end{align*}
	For $I_{11}$, by embedding $\mathcal H_{\gamma-\alpha}^{d_1} \subset \mathcal H_{\gamma-2\alpha}^{d_1}$ continuously we obtain
	\begin{align}\label{I11}
		I_{11} \leq & \int_0^1 |D^2 G_1 (X_s^\varepsilon+\theta(X_t^\varepsilon-X_s^\varepsilon))|_{\mathcal L (\mathcal H_{\gamma-2\alpha}; \mathcal L (\mathcal H_{\gamma-2\alpha}; \mathcal H_{\gamma-2\alpha-\sigma}^{d_1 }))} \nonumber\\
		&~~~\cdot
		|G_1(X_t^\varepsilon)|_{\mathcal H_{\gamma-2\alpha}^{d_1}} |X_t^\varepsilon-X_s^\varepsilon-(\bar{X}_t-\bar{X}_s)|_{\mathcal H_{\gamma-2\alpha}} \mathrm{d} \theta \nonumber\\
		\leq & C \int_0^1 \|G_1(X^\varepsilon)\|_{\infty,\mathcal H_{\gamma-\alpha}^{d_1}} |X_t^\varepsilon-\bar{X}_t-(X_s^\varepsilon-\bar{X}_s)|_{\mathcal H_{\gamma-2\alpha}} \mathrm{d} \theta \nonumber\\
		\leq & C  \|X^\varepsilon, G_1(X^\varepsilon)\|_{\mathcal D_{B,\gamma}^{2\alpha}} |X^\varepsilon-\bar{X}|_{\alpha, \mathcal H_{\gamma-2\alpha}} |t-s|^\alpha  .
	\end{align}	
	For $I_{12}$, using (\ref{Y}), (\ref{D2G1}), $\mathcal H_{\gamma} \hookrightarrow \mathcal H_{\gamma-2\alpha}$ and the fact that $G_1$ is bounded yields
	\begin{align}\label{I12}
		I_{12} \leq & \int_0^1 |D^2 G_1 (X_s^\varepsilon+\theta(X_t^\varepsilon-X_s^\varepsilon))-D^2 G_1(\bar{X}_s+\theta(\bar{X}_t-\bar{X}_s))|_{\mathcal L (\mathcal H_{\gamma-2\alpha}; \mathcal L (\mathcal H_{\gamma-2\alpha}; \mathcal H_{\gamma-2\alpha-\sigma}^{d_1}))} \nonumber\\
		&~~~~ \cdot
		|G_1(X_t^\varepsilon)|_{\mathcal H_{\gamma-2\alpha}^{d_1}} |\bar{X}_t-\bar{X}_s|_{\mathcal H_{\gamma-2\alpha}} \mathrm{d} \theta \nonumber\\
		\leq & C \int_0^1 |(1-\theta)(X_s^\varepsilon-\bar{X}_s)+\theta (X_t^\varepsilon-\bar{X}_t)|_{\mathcal H_{\gamma-2\alpha}}   |\bar{X}_t-\bar{X}_s|_{\mathcal H_{\gamma-2\alpha}} \mathrm{d} \theta \nonumber\\
		\leq & C \int_0^1  \|X^\varepsilon-\bar{X}\|_{\infty, \mathcal H_\gamma}
		|\bar{X}|_{\alpha, \mathcal H_{\gamma-2\alpha}} |t-s|^\alpha \mathrm{d} \theta  \nonumber\\
		\leq & C \|X^\varepsilon-\bar{X}\|_{\infty, \mathcal H_\gamma} (\|G_1(\bar{X})\|_{\infty,\mathcal H_{\gamma-2\alpha}^{d_1}}|B|_\alpha+|R^{\bar{X}} |_{\alpha, \mathcal H_{\gamma-2\alpha}})  |t-s|^\alpha  .
	\end{align}	
	For $I_{13}$, according to (\ref{Y}) and $\mathcal H_{\gamma-\alpha}^{d_1} \hookrightarrow \mathcal H_{\gamma-2\alpha}^{d_1}$ we get
	\begin{align}\label{I13}
		I_{13}	
		\leq & C \|G_1(X^\varepsilon)-G_1(\bar{X})\|_{\infty,\mathcal H_{\gamma-\alpha}^{d_1}} |\bar{X}|_{\alpha,\mathcal H_{\gamma-2\alpha}} |t-s|^\alpha \nonumber\\
		\leq & C \|G_1(X^\varepsilon)-G_1(\bar{X})\|_{\infty,\mathcal H_{\gamma-\alpha}^ {d_1}} (\|G_1(\bar{X})\|_{\infty,\mathcal H_{\gamma-2\alpha}^{d_1}}|B|_\alpha+|R^{\bar{X}} |_{\alpha, \mathcal H_{\gamma-2\alpha}})  |t-s|^\alpha  .
	\end{align}

	Combining estimates (\ref{I21})--(\ref{I13}), as well as using (\ref{Y}) and $\mathcal H_{\gamma-\alpha} \hookrightarrow \mathcal H_{\gamma-2\alpha}$ we have
	\begin{align}\label{G3}
		&|DG_1(X^\varepsilon)G_1(X^\varepsilon)-DG_1(\bar{X})G_1(\bar{X})|_{\alpha, \mathcal H_{\gamma-2\alpha-\sigma}^{d_1 \times d_1}}  \nonumber\\
		\leq & C(1+|B|_\alpha) \left(1+\|X^\varepsilon,G_1(X^\varepsilon)\|_{\mathcal{D}_{ B,\gamma}^{2\alpha}} +\|\bar{X},G_1(\bar{X})\|_{\mathcal{D}_{ B,\gamma}^{2\alpha}} \right) \|X^\varepsilon-\bar{X},G_1(X^\varepsilon)-G_1(\bar{X})\|_{\mathcal{D}_{ B,\gamma}^{2\alpha}}  .
	\end{align}

	Below we estimate the remainder $|R^{G_1(X^\varepsilon)-G_1(\bar{X})} |_{\alpha, \mathcal H_{\gamma-\alpha-\sigma}^{d_1} }$.
	Using (\ref{remainder}) twice we have
	\begin{align*}
		R_{s,t}^{G_1(X^\varepsilon)-G_1(\bar{X})}
		=&G_1(X_t^\varepsilon)-G_1(\bar{X}_t)-(G_1(X_s^\varepsilon)-G_1(\bar{X}_s))  \nonumber\\ &-(DG_1(X_s^\varepsilon)G_1(X_s^\varepsilon)-DG_1(\bar{X}_s)G_1(\bar{X}_s)) B_{s,t} \nonumber\\
		=& G_1(X_t^\varepsilon)-G_1(X_s^\varepsilon)-(G_1(\bar{X}_t)-G_1(\bar{X}_s)) \nonumber\\
		&-(DG_1(X_s^\varepsilon)G_1(X_s^\varepsilon)-DG_1(\bar{X}_s)G_1(\bar{X}_s)) B_{s,t} \nonumber\\
		=& \bigg\{\int_0^1 DG_1(X_s^\varepsilon+\theta (X_t^\varepsilon-X_s^\varepsilon))-DG_1(X_s^\varepsilon) \mathrm{d} \theta G_1(X_s^\varepsilon)  B_{s,t} \nonumber\\
		&-\int_0^1 DG_1(\bar{X}_s+\theta (\bar{X}_t-\bar{X}_s))-DG_1(\bar{X}_s) \mathrm{d} \theta G_1(\bar{X}_s)  B_{s,t} \bigg\}\nonumber\\
		&+\bigg\{ \int_0^1 DG_1(X_s^\varepsilon+\theta (X_t^\varepsilon-X_s^\varepsilon)) \mathrm{d} \theta R_{s,t}^{X^\varepsilon}
		-\int_0^1 DG_1(\bar{X}_s+\theta (\bar{X}_t-\bar{X}_s)) \mathrm{d} \theta R_{s,t}^{\bar{X}} \bigg\} \nonumber\\
		=: & J_1 + J_2.
	\end{align*}	
	For $J_1$, we have
	\begin{align*}
		|J_1 |_{\mathcal H_{\gamma-\alpha-\sigma}^{d_1} }
		\leq & \bigg| \int_0^1 DG_1(X_s^\varepsilon+\theta (X_t^\varepsilon-X_s^\varepsilon))-DG_1(X_s^\varepsilon)   \nonumber\\
		&-\{ DG_1(\bar{X}_s+\theta (\bar{X}_t-\bar{X}_s))-DG_1(\bar{X}_s)\} \mathrm{d} \theta G_1(X_s^\varepsilon)  B_{s,t} \bigg|_{\mathcal H_{\gamma-\alpha-\sigma}^{d_1} }  \nonumber\\
		&+ \bigg|\int_0^1 DG_1(\bar{X}_s+\theta (\bar{X}_t-\bar{X}_s))-DG_1(\bar{X}_s) \mathrm{d} \theta (G_1(X_s^\varepsilon)-G_1(\bar{X}_s)) B_{s,t}  \bigg|_{\mathcal H_{\gamma-\alpha-\sigma}^{d_1} }  \nonumber\\
		=: & J_{11}+J_{12}.
	\end{align*}	
	According to (\ref{DG1}) and $\mathcal H_\gamma \hookrightarrow \mathcal H_{\gamma-\alpha}$ we have
	\begin{align}\label{J11}
		J_{11} \leq &\bigg| \int_0^1 DG_1(X_s^\varepsilon+\theta (X_t^\varepsilon-X_s^\varepsilon))-DG_1(X_s^\varepsilon)   \nonumber\\
		&~~~-\{ DG_1(\bar{X}_s+\theta (\bar{X}_t-\bar{X}_s))-DG_1(\bar{X}_s)\} \mathrm{d} \theta \bigg|_{\mathcal L(\mathcal H_{\gamma-\alpha};\mathcal H_{\gamma-\alpha-\sigma}^{d_1})} |G_1(X_s^\varepsilon)|_{\mathcal H_{\gamma-\alpha}^{d_1} }  |B_{s,t}|   \nonumber\\	
		\leq &  \int_0^1 | DG_1(X_s^\varepsilon+\theta (X_t^\varepsilon-X_s^\varepsilon))-DG_1(\bar{X}_s+\theta (\bar{X}_t-\bar{X}_s)) |_{\mathcal L(\mathcal H_{\gamma-\alpha};\mathcal H_{\gamma-\alpha-\sigma}^{d_1}) }  \nonumber\\
		&+|DG_1(X_s^\varepsilon)-DG_1(\bar{X}_s) |_{\mathcal L(\mathcal H_{\gamma-\alpha};\mathcal H_{\gamma-\alpha-\sigma}^{d_1}) } \mathrm{d} \theta
		\|G_1(X^\varepsilon)\|_{\infty, \mathcal H_{\gamma-\alpha}^{d_1}}  |B|_{\alpha}  |t-s|^\alpha   \nonumber\\	
		\leq & C \int_0^1 \theta |X_t^\varepsilon-\bar{X}_t|_{\mathcal H_{\gamma-\alpha}}
		+(2-\theta) |X_s^\varepsilon-\bar{X}_s|_{\mathcal H_{\gamma-\alpha}} \mathrm{d} \theta  \|X^\varepsilon, G_1(X^\varepsilon)\|_{\mathcal{D}_{ B,\gamma}^{2\alpha}}
		|B|_{\alpha}  |t-s|^\alpha   \nonumber\\
		\leq & C \|X^\varepsilon-\bar{X}\|_{\infty, \mathcal H_{\gamma}}
		\|X^\varepsilon, G_1(X^\varepsilon)\|_{\mathcal{D}_{ B,\gamma}^{2\alpha}}
		|B|_{\alpha}  |t-s|^\alpha   .
	\end{align}
	Following the condition $(\mathbf{H2})$ we obtain
	\begin{align}\label{J12}
	J_{12} \leq & \int_0^1 |DG_1(\bar{X}_s+\theta (\bar{X}_t-\bar{X}_s))|_{\mathcal L(\mathcal H_{\gamma-\alpha};\mathcal H_{\gamma-\alpha-\sigma}^{d_1}) } +|DG_1(\bar{X}_s)|_{\mathcal L(\mathcal H_{\gamma-\alpha};\mathcal H_{\gamma-\alpha-\sigma}^{d_1}) } \mathrm{d} \theta \nonumber\\
		& \cdot |G_1(X_s^\varepsilon)-G_1(\bar{X}_s)|_{\mathcal H_{\gamma-\alpha}^{d_1}} |B_{s,t}|  \nonumber\\
		\leq & C \|G_1(X^\varepsilon)-G_1(\bar{X})\|_{\infty, \mathcal H_{\gamma-\alpha}^{d_1} } |B|_\alpha |t-s|^\alpha.
	\end{align}
	Using (\ref{DG1}), $\mathcal H_\gamma \hookrightarrow \mathcal H_{\gamma-\alpha}$ and the boundedness of $DG_1$ we have
	\begin{align}\label{remJ2}
		|J_2 |_{\mathcal H_{\gamma-\alpha-\sigma}^{d_1} }
		\leq & \left|\int_0^1 DG_1(X_s^\varepsilon+\theta (X_t^\varepsilon-X_s^\varepsilon))-DG_1(\bar{X}_s+\theta (\bar{X}_t-\bar{X}_s)) \mathrm{d} \theta R_{s,t}^{X^\varepsilon} \right|_{\mathcal H_{\gamma-\alpha-\sigma}^{d_1} } \nonumber\\
		&+\left|\int_0^1 DG_1(\bar{X}_s+\theta (\bar{X}_t-\bar{X}_s)) \mathrm{d} \theta \left(R_{s,t}^{X^\varepsilon}-R_{s,t}^{\bar{X}} \right) \right|_{\mathcal H_{\gamma-\alpha-\sigma}^{d_1} } \nonumber\\
		\leq & \int_0^1 | DG_1(X_s^\varepsilon+\theta (X_t^\varepsilon-X_s^\varepsilon))-DG_1(\bar{X}_s+\theta (\bar{X}_t-\bar{X}_s)) |_{\mathcal L(\mathcal H_{\gamma-\alpha}; \mathcal H_{\gamma-\alpha-\sigma}^{d_1} )} \mathrm{d} \theta \left|R_{s,t}^{X^\varepsilon} \right|_{\mathcal H_{\gamma-\alpha}}  \nonumber\\
		&+\int_0^1 | DG_1(\bar{X}_s+\theta (\bar{X}_t-\bar{X}_s)) |_{\mathcal L(\mathcal H_{\gamma-\alpha}; \mathcal H_{\gamma-\alpha-\sigma}^{d_1} )}  \mathrm{d} \theta \left|R_{s,t}^{X^\varepsilon}-R_{s,t}^{\bar{X}} \right|_{\mathcal H_{\gamma-\alpha}} \nonumber\\
		\leq & C \|X^\varepsilon-\bar{X}\|_{\infty, \mathcal H_{\gamma}}  |R^{X^\varepsilon}|_{\alpha, \mathcal H_{\gamma-\alpha}} |t-s|^\alpha
		+C |R^{X^\varepsilon-\bar{X}} |_{\alpha, \mathcal H_{\gamma-\alpha}} |t-s|^\alpha \nonumber\\
		\leq & C \left(\|X^\varepsilon-\bar{X}\|_{\infty, \mathcal H_{\gamma}}
		\|X^\varepsilon, G_1(X^\varepsilon)\|_{\mathcal{D}_{ B,\gamma}^{2\alpha}} + |R^{X^\varepsilon-\bar{X}} |_{\alpha, \mathcal H_{\gamma-\alpha}} \right) |t-s|^\alpha .
	\end{align}
	Collecting the bounds established in (\ref{J11})--(\ref{remJ2}) yields the estimate
	\begin{align}\label{G1rem1}
		|R^{G_1(X^\varepsilon)-G_1(\bar{X})} |_{\alpha, \mathcal H_{\gamma-\alpha-\sigma}^{d_1}} \leq C (1+|B|_\alpha) (1+\|X^\varepsilon, G_1(X^\varepsilon)\|_{\mathcal{D}_{ B,\gamma}^{2\alpha}} ) \|X^\varepsilon-\bar{X},G_1(X^\varepsilon)-G_1(\bar{X})\|_{\mathcal{D}_{ B,\gamma}^{2\alpha}}.
	\end{align}

	For $|R^{G_1(X^\varepsilon)-G_1(\bar{X})} |_{2\alpha, \mathcal H_{\gamma-2\alpha-\sigma}^{d_1 } }$, by (\ref{remainder}) we have	
	\begin{align*}
		\left|R^{G_1(X^\varepsilon)-G_1(\bar{X})}_{s,t} \right|_{\mathcal H_{\gamma-2\alpha-\sigma}^{d_1 }}
		=&\big|G_1(X_t^\varepsilon)-G_1(\bar{X}_t)-G_1(X_s^\varepsilon)+G_1(\bar{X}_s)  \nonumber\\ &-(DG_1(X_s^\varepsilon)G_1(X_s^\varepsilon)-DG_1(\bar{X}_s)G_1(\bar{X}_s)) B_{s,t} \big|_{\mathcal H_{\gamma-2\alpha-\sigma}^{d_1 }}.
	\end{align*}	
	Applying Taylor's formula we have
	\begin{equation}\label{difG1}
		G_1(X_t^\varepsilon)-G_1(X_s^\varepsilon)=DG_1(X_s^\varepsilon)X_{s,t}^\varepsilon+\int_0^1 D^2 G_1(X_s^\varepsilon+\theta X_{s,t}^\varepsilon) \langle X_{s,t}^\varepsilon, X_{s,t}^\varepsilon \rangle_{\mathcal H_{\gamma-2\alpha} }  (1-\theta) \mathrm{d} \theta.	
	\end{equation}
	Therefore, by (\ref{difG1}) we have
	\begin{align*}
		\left|R^{G_1(X^\varepsilon)-G_1(\bar{X})}_{s,t} \right|_{\mathcal H_{\gamma-2\alpha-\sigma}^{d_1 }}
		\leq & |DG_1(X_s^\varepsilon)(X_{s,t}^\varepsilon -G_1(X_s^\varepsilon) B_{s,t})-DG_1(\bar{X}_s)(\bar{X}_{s,t}-G_1(\bar{X}_s) B_{s,t}) |_{\mathcal H_{\gamma-2\alpha-\sigma}^{d_1 }}  \nonumber\\
		&+\int_0^1 |D^2 G_1(X_s^\varepsilon+\theta X_{s,t}^\varepsilon)  \langle X_{s,t}^\varepsilon, X_{s,t}^\varepsilon \rangle_{\mathcal H_{\gamma-2\alpha} }   \nonumber\\
		&~~~~~~~~~-D^2 G_1(\bar{X}_s+\theta \bar{X}_{s,t})  \langle \bar{X}_{s,t}, \bar{X}_{s,t} \rangle_{\mathcal H_{\gamma-2\alpha} }  |_{\mathcal H_{\gamma-2\alpha-\sigma}^{d_1 }} (1-\theta) \mathrm{d} \theta  \nonumber\\
		=: & J_3+J_4.
	\end{align*}
	According to the condition $(\mathbf{H2})$, (\ref{remainder}) and $\mathcal H_\gamma \hookrightarrow \mathcal H_{\gamma-2\alpha}$, we have
	\begin{align}\label{J3}
		J_3=&|DG_1(X_s^\varepsilon) R_{s,t}^{X^\varepsilon}-DG_1(\bar{X}_s) R_{s,t}^{\bar{X}} |_{\mathcal H_{\gamma-2\alpha-\sigma}^{d_1 }}  \nonumber\\
		\leq & |DG_1(X_s^\varepsilon) (R_{s,t}^{X^\varepsilon}-R_{s,t}^{\bar{X}}) |_{\mathcal H_{\gamma-2\alpha-\sigma}^{d_1 }}
		+| (DG_1(X_s^\varepsilon)-DG_1(\bar{X}_s)) R_{s,t}^{\bar{X}} |_{\mathcal H_{\gamma-2\alpha-\sigma}^{d_1 }} \nonumber\\
		\leq & |DG_1(X_s^\varepsilon)|_{\mathcal L(\mathcal H_{\gamma-2\alpha}; \mathcal H_{\gamma-2\alpha-\sigma}^{d_1 })} |R_{s,t}^{X^\varepsilon}-R_{s,t}^{\bar{X}}|_{\mathcal H_{\gamma-2\alpha}}  \nonumber\\
		&+|DG_1(X_s^\varepsilon)-DG_1(\bar{X}_s)|_{\mathcal L (\mathcal H_{\gamma-2\alpha}; \mathcal H_{\gamma-2\alpha-\sigma}^{d_1 })} |R_{s,t}^{\bar{X}}|_{\mathcal H_{\gamma-2\alpha}} \nonumber\\
		\leq & C |R^{X^\varepsilon-\bar{X}}|_{2\alpha, \mathcal H_{\gamma-2\alpha}} |t-s|^{2\alpha} +C |X_s^\varepsilon-\bar{X}_s|_{\mathcal H_{\gamma-2\alpha}} |R^{\bar{X}} |_{2\alpha, \mathcal H_{\gamma-2\alpha}} |t-s|^{2\alpha} \nonumber\\
		\leq & C \left(|R^{X^\varepsilon-\bar{X}}|_{2\alpha, \mathcal H_{\gamma-2\alpha}}+\|X^\varepsilon-\bar{X}\|_{\infty, \mathcal H_\gamma} \|\bar{X}, G_1(\bar{X})\|_{\mathcal{D}_{ B,\gamma}^{2\alpha}}  \right) |t-s|^{2\alpha}  \nonumber\\
		\leq & C \left( 1+\|\bar{X}, G_1(\bar{X})\|_{\mathcal{D}_{ B,\gamma}^{2\alpha}} \right) \|X^\varepsilon-\bar{X},G_1(X^\varepsilon)-G_1(\bar{X})\|_{\mathcal{D}_{ B,\gamma}^{2\alpha}} |t-s|^{2\alpha}.
	\end{align}
	For $J_4$, we have
	\begin{align*}
		J_4 \leq & \int_0^1 |(D^2 G_1(X_s^\varepsilon+\theta X_{s,t}^\varepsilon)-D^2 G_1(\bar{X}_s+\theta \bar{X}_{s,t}))  \langle X_{s,t}^\varepsilon, X_{s,t}^\varepsilon \rangle_{\mathcal H_{\gamma-2\alpha} }   |_{\mathcal H_{\gamma-2\alpha-\sigma}^{d_1 }} \mathrm{d} \theta \nonumber\\
		&+\int_0^1 |D^2 G_1(\bar{X}_s+\theta \bar{X}_{s,t}) ( \langle X_{s,t}^\varepsilon, X_{s,t}^\varepsilon \rangle_{\mathcal H_{\gamma-2\alpha} }-\langle \bar{X}_{s,t}, \bar{X}_{s,t} \rangle_{\mathcal H_{\gamma-2\alpha} } ) |_{\mathcal H_{\gamma-2\alpha-\sigma}^{d_1 }} \mathrm{d} \theta \nonumber\\
		=:& J_{41}+J_{42}.
	\end{align*}
	For $J_{41}$, by (\ref{Y}), (\ref{D2G1}) and $\mathcal H_\gamma \hookrightarrow \mathcal H_{\gamma-2\alpha}$ we have
	\begin{align}\label{J41}
		J_{41} 	\leq & C  \int_0^1 |\theta (X_t^\varepsilon-\bar{X}_t)+(1-\theta) (X_s^\varepsilon-\bar{X}_s)|_{\mathcal H_{\gamma-2\alpha}} |X_{s,t}^\varepsilon|_{\mathcal H_{\gamma-2\alpha}}^2 \mathrm{d} \theta \nonumber\\
		\leq  & C \|X^\varepsilon-\bar{X}\|_{\infty, \mathcal H_\gamma} |X_{s,t}^\varepsilon|_{\mathcal H_{\gamma-2\alpha}}^2
		\leq C \|X^\varepsilon-\bar{X}\|_{\infty, \mathcal H_\gamma} |X^\varepsilon|^2_{\alpha , \mathcal H_{\gamma-2\alpha}} |t-s|^{2\alpha}  \nonumber\\
		\leq & C \|X^\varepsilon-\bar{X}\|_{\infty, \mathcal H_\gamma}  (\|G_1(X^\varepsilon)\|_{\infty,\mathcal H_{\gamma-2\alpha}^{d_1}} |B|_\alpha+|R^{X^\varepsilon}|_{\alpha,\mathcal H_{\gamma-2\alpha}})^2  |t-s|^{2\alpha} \nonumber\\
		\leq & C (1+|B|_\alpha)^2 \|X^\varepsilon,G_1(X^\varepsilon)\|_{\mathcal{D}_{ B,\gamma}^{2\alpha}}^2 \|X^\varepsilon-\bar{X},G_1(X^\varepsilon)-G_1(\bar{X})\|_{\mathcal{D}_{ B,\gamma}^{2\alpha}} |t-s|^{2\alpha} ,
	\end{align}	
	where we used $\mathcal H_{\gamma-\alpha} \hookrightarrow \mathcal H_{\gamma-2\alpha}$ for the last inequality.
	For $J_{42}$, using (\ref{Y}) and $\mathcal H_{\gamma-\alpha} \hookrightarrow \mathcal H_{\gamma-2\alpha}$ we get
	\begin{align}\label{J42}
		J_{42} \leq & C | \langle X_{s,t}^\varepsilon-\bar{X}_{s,t}, X_{s,t}^\varepsilon \rangle_{\mathcal H_{\gamma-2\alpha} }|+|\langle \bar{X}_{s,t}, X_{s,t}^\varepsilon-\bar{X}_{s,t} \rangle_{\mathcal H_{\gamma-2\alpha} } | \nonumber\\
		\leq & C |X^\varepsilon-\bar{X}|_{\alpha, \mathcal H_{\gamma-2\alpha}} (|X^\varepsilon|_{\alpha, \mathcal H_{\gamma-2\alpha}}+|\bar{X}|_{\alpha, \mathcal H_{\gamma-2\alpha}} ) |t-s|^{2\alpha}  \nonumber\\
		\leq & C(1+|B|_\alpha)^2 \|X^\varepsilon-\bar{X},G_1(X^\varepsilon)-G_1(\bar{X})\|_{\mathcal{D}_{ B,\gamma}^{2\alpha}}  \left(\|X^\varepsilon,G_1(X^\varepsilon)\|_{\mathcal{D}_{ B,\gamma}^{2\alpha}} +\|\bar{X},G_1(\bar{X})\|_{\mathcal{D}_{ B,\gamma}^{2\alpha}} \right) |t-s|^{2\alpha}.
	\end{align}
	Combining the estimates in (\ref{J3})--(\ref{J42}), it follows that
	\begin{align}\label{G1rem2}
		|R^{G_1(X^\varepsilon)-G_1(\bar{X})} |_{2\alpha, \mathcal H_{\gamma-2\alpha-\sigma}^{d_1 } }
		\leq & C (1+|B|_\alpha)^2 \|X^\varepsilon-\bar{X},G_1(X^\varepsilon)-G_1(\bar{X})\|_{\mathcal{D}_{ B,\gamma}^{2\alpha}}  \nonumber\\ & \cdot \left(1+\|\bar{X},G_1(\bar{X})\|_{\mathcal{D}_{ B,\gamma}^{2\alpha}}+\|X^\varepsilon,G_1(X^\varepsilon)\|_{\mathcal{D}_{ B,\gamma}^{2\alpha}} +\|X^\varepsilon,G_1(X^\varepsilon)\|_{\mathcal{D}_{ B,\gamma}^{2\alpha}}^2 \right).
	\end{align}
	Combined with the above calculations (\ref{Gdif1}), (\ref{Gdif2}), (\ref{G3}), (\ref{G1rem1}) and (\ref{G1rem2}), we have
	\begin{align*}
	&\|G_1(X^\varepsilon)-G_1(\bar{X}),DG_1(X^\varepsilon)G_1(X^\varepsilon)-DG_1(\bar{X})G_1(\bar{X})\|_{\mathcal{D}_{B,\gamma-\sigma}^{2\alpha} }  \nonumber\\
	\lesssim & (1+|B|_\alpha)^2 \left(1+\|\bar{X},G_1(\bar{X})\|_{\mathcal{D}_{B,\gamma}^{2\alpha}}+\|X^\varepsilon,G_1(X^\varepsilon)\|_{\mathcal{D}_{B,\gamma}^{2\alpha}} +\|X^\varepsilon,G_1(X^\varepsilon)\|_{\mathcal{D}_{ B,\gamma}^{2\alpha}}^2 \right) \nonumber\\
	& \cdot \|X^\varepsilon-\bar{X},G_1(X^\varepsilon)-G_1(\bar{X}) \|_{\mathcal{D}_{B,\gamma}^{2\alpha}}.	
	\end{align*}
The conclusion is established by using Lemmas \ref{slowest} and \ref{barXest}.	
	\end{proof}

\subsection{Probabilistic aspects} \label{proba}

From now on, we work in a stochastic framework.
More precisely, we study the slow--fast system (\ref{eq1}) of rough PDEs from a probabilistic perspective.
To do this, we first introduce the random rough path $\mathbf M$, where $\mathbf M$ is as defined in (\ref{mixRP}).

As before, we fix $\alpha_0 \in (1/3,1/2]$ and let $(\Omega, \mathcal{F}, \{\mathcal{F}_t\}_{t \in [0, T]},\mathbb{P})$ be a filtered probability space satisfying the usual conditions.
Let
$\mathbf B=\{(B_{s,t}, \mathbb B_{s,t})\}_{0\leq s\leq t\leq T}$ be an $\mathscr{C}^{\alpha}([0,T];\mathbf{R}^{d_1})$-valued random variable defined on $(\Omega, \mathcal{F}, \{\mathcal{F}_t\}_{t \in [0, T]}, \mathbb{P})$ for every $\alpha \in (1/3, \alpha_0)$,
and let $t\mapsto (B_{0,t}, \mathbb B_{0,t})$ is $\{\mathcal{F}_t\}$-adapted.
Let $\{W_t\}_{t\in [0,T]}$ be a standard $\mathbf R^{d_2}$-valued $\{\mathcal{F}_t\}$-Brownian motion, independent of $\mathbf B$.
The definition of the random rough path $\mathbf M$ is given below.

\begin{definition}\label{Xi}
Define the ``lift" of Brownian motion $W$ by
$$\mathbb W_{s,t}^{\text{It\^{o}}}:= \int_s^t  W_{s,r} \otimes \mathrm{d} W_r,$$
where the integration is understood in the sense of It\^{o} stochastic integral.
Then $\mathbf W^{\text{It\^{o}} }=(W,\mathbb W^{\text{It\^{o}}})$ is an It\^{o}-type Brownian rough path
and for any $\alpha \in (1/3, 1/2)$, $\mathbf W^{\text{It\^{o}} }=(W,\mathbb W^{\text{It\^{o}}}) \in \mathscr{C}^\alpha ([0,T]; \mathbb R^{d_2})$, $\mathbb P$-a.s..
Furthermore, we set
$$I[B,W]_{s,t}:=\int_s^t  B_{s,r} \otimes \mathrm{d} W_r$$
as an It\^{o} stochastic integral, we can then define
$$I[W,B]_{s,t}:= W_{s,t} \otimes  B_{s,t} -\int_s^t \mathrm{d} W_r \otimes  B_{s,r} $$
by imposing integration by parts.
All components of the random rough path $\mathbf M=(M, \mathbb M)$ have hereby been defined.
\end{definition}

By Kolmogorov's continuity criterion we obtain the following lemma (see \cite[Lemma 4.6]{In22}).

\begin{lemma} \label{mixrandom}
Assume that condition $(\mathbf{H6})$ holds. For any $\alpha \in (1/3, \alpha_0)$ and $\mathbf M$ as introduced in Definition $\ref{Xi}$, we have $\mathbf M(\omega)=\mathbf M=(M, \mathbb M) \in \mathscr{C}^\alpha ([0,T]; \mathbb R^{d_1+d_2})$, $\mathbb P$-a.s..
Furthermore, for any $p \in [1,\infty)$, we have $\mathbb E \big[\interleave \mathbf M \interleave_\alpha^p \big] <\infty$.
\end{lemma}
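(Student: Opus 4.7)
The plan is to verify the appropriate moment estimates for each component of $\mathbf M$ and then invoke a Kolmogorov-type continuity criterion for rough paths (cf. Friz--Hairer, as used in Inahama's reference) to conclude $\alpha$-Hölder regularity $\mathbb P$-a.s. together with the moment bound on the homogeneous norm $\interleave \mathbf M \interleave_\alpha$. The algebraic Chen relation for $\mathbb M$ will have to be checked separately.

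First I would handle the first-level process $M=(B,W)^\top$. The Hölder regularity of $B$ and the finiteness of $\mathbb E[|B|_\alpha^p]$ for every $p\in[1,\infty)$ follow from $(\mathbf{H6})$, while the analogous statement for $W$ is classical, via the Kolmogorov criterion applied to the Gaussian moments $\mathbb E[|W_{s,t}|^q] = c_q|t-s|^{q/2}$.

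Next I would estimate the four blocks of $\mathbb M$. The diagonal block $\mathbb B$ is covered directly by $(\mathbf{H6})$, and the block $\mathbb W^{\text{It\^o}}_{s,t}$ is controlled via the Burkholder--Davis--Gundy (BDG) inequality applied to the It\^o integral, giving $\mathbb E[|\mathbb W^{\text{It\^o}}_{s,t}|^{p}] \lesssim |t-s|^{p}$ for all $p\geq 1$. For the off-diagonal block $I[B,W]_{s,t} = \int_s^t B_{s,r}\otimes \mathrm d W_r$, I exploit the independence of $\mathbf B$ and $W$: conditioning on $\sigma(\mathbf B)$, the process $W$ is still an $\{\mathcal F_t\vee \sigma(\mathbf B)\}$-Brownian motion, so the conditional BDG inequality yields
\[
\mathbb E\bigl[ |I[B,W]_{s,t}|^{2p} \,\big|\, \sigma(\mathbf B) \bigr] \lesssim \Bigl( \int_s^t |B_{s,r}|^2\,\mathrm d r \Bigr)^{p} \lesssim |B|_\alpha^{2p}\,|t-s|^{p(1+2\alpha)}.
\]
Taking expectation and using $(\mathbf{H6})$ gives $\mathbb E[|I[B,W]_{s,t}|^{2p}] \lesssim |t-s|^{p(1+2\alpha)}$. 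The block $I[W,B]$ is controlled in the same way through its integration-by-parts definition: the pointwise term $W_{s,t}\otimes B_{s,t}$ has the required Hölder behaviour by the first-level estimates, and the remaining It\^o piece $\int_s^t \mathrm d W_r\otimes B_{s,r}$ is estimated exactly as $I[B,W]$.

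Chen's relation for $\mathbb M$ reduces to its four blocks. For $\mathbb B$ and $\mathbb W^{\text{It\^o}}$ it is known; for $I[B,W]$, splitting the integration interval at an intermediate time $u$ and using $B_{s,r}=B_{s,u}+B_{u,r}$ for $r\in[u,t]$ yields $I[B,W]_{s,t}-I[B,W]_{s,u}-I[B,W]_{u,t}=B_{s,u}\otimes W_{u,t}$, and the integration-by-parts definition of $I[W,B]$ gives the symmetric identity. With all moment bounds in place, choosing $p$ large enough so that $p(1+2\alpha)>1$ in every off-diagonal block and $p\alpha>1$ at the first level, the Kolmogorov criterion for rough paths furnishes an $\alpha$-Hölder continuous modification of $\mathbf M$ almost surely and the estimate $\mathbb E[\interleave \mathbf M \interleave_\alpha^p]<\infty$ for every $p\in[1,\infty)$. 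The main obstacle is precisely the construction and $L^{2p}$-control of the cross integrals $I[B,W]$ and $I[W,B]$; this is where the independence of $\mathbf B$ and $W$ is essential, since it enables the conditioning argument that turns these into standard It\^o integrals with deterministic integrands.
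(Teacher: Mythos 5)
Your proposal follows essentially the same route as the paper, which gives no argument of its own but simply invokes Kolmogorov's continuity criterion and points to \cite[Lemma 4.6]{In22}: estimate moments of each block of $M$ and $\mathbb M$, verify Chen's relation blockwise, and apply the Kolmogorov criterion for rough paths. Your exponents are the right ones; in particular the cross-block bound $\mathbb E[|I[B,W]_{s,t}|^{2p}]\lesssim |t-s|^{p(1+2\alpha)}$ yields $2\alpha$-H\"older control with all moments because $\tfrac12+\alpha>2\alpha$ for $\alpha<\tfrac12$, and the $B$-blocks need no Kolmogorov argument at all since $(\mathbf{H6})$ already bounds the H\"older norms of $(B,\mathbb B)$ in every $L^p$.

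One step, as written, is not justified: from ``$W$ is an $\{\mathcal F_t\}$-Brownian motion, $\mathbf B$ is $\{\mathcal F_t\}$-adapted, and $W$ is independent of $\mathbf B$'' it does \emph{not} follow that $W$ is a Brownian motion for the enlarged filtration $\{\mathcal F_t\vee\sigma(\mathbf B)\}$; pairwise independence does not give independence of $W_{s,t}$ from $\mathcal F_s\vee\sigma(\mathbf B)$ when $\mathcal F_t$ is strictly larger than the filtration generated by $(B,W)$ (one can build explicit counterexamples with an extra independent randomization in $\mathcal F_0$). The estimate you want does not actually need this enlargement: since $r\mapsto B_{s,r}$ is $\{\mathcal F_t\}$-adapted and continuous, $u\mapsto\int_s^u B_{s,r}\otimes \mathrm d W_r$ is an $\{\mathcal F_t\}$-local martingale, and the plain (unconditional) BDG inequality gives
\begin{equation*}
\mathbb E\big[|I[B,W]_{s,t}|^{2p}\big]\lesssim \mathbb E\Big[\Big(\int_s^t |B_{s,r}|^2\,\mathrm d r\Big)^{p}\Big]\lesssim \mathbb E\big[|B|_\alpha^{2p}\big]\,|t-s|^{p(1+2\alpha)},
\end{equation*}
which is finite by $(\mathbf{H6})$; alternatively, carry out the conditional Gaussian computation on Riemann-sum approximations, where independence of the two \emph{processes} is all that is used. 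The same remark applies to the It\^o piece of $I[W,B]$. With that repair your argument is precisely the one behind the cited lemma.
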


To prove Theorem \ref{MainRes}, exponential ergodicity is indispensable.
We therefore present the following lemma,
which asserts that the fast component $Y^\varepsilon$, defined with respect to an It\^{o}-type Brownian rough path, is equivalent to an It\^{o} stochastic PDE.
The proof is similar to that in \cite[Lemma 3.3]{LLPX25}, and is therefore omitted.
\begin{lemma}\label{RSPDE}
	Assume that $(\mathbf{H6})$ holds. Then, for every fixed $\varepsilon \in (0,1]$, $Y^\varepsilon$ satisfies the following It\^{o} stochastic PDE:
	\begin{equation}\label{eqY}
		Y_t^\varepsilon= S_{t/\varepsilon} y+ \frac 1\varepsilon \int_0^{t} S_{(t-s)/\varepsilon} F_2(X_s^\varepsilon, Y_s^\varepsilon) \mathrm{d} s
		+\frac 1{\sqrt{\varepsilon}} \int_0^{t} S_{(t-s)/\varepsilon} G_2(X_s^\varepsilon, Y_s^\varepsilon) \mathrm{d} W_s, ~~~t \in [0,T].
	\end{equation}
\end{lemma}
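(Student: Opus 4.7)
The plan is to show that the rough integral $\int_0^t S_{(t-s)/\varepsilon} G_2(X_s^\varepsilon, Y_s^\varepsilon)\,\mathrm{d}\mathbf{W}^{\text{It\^{o}}}_s$ appearing in the mild formulation of $Y^\varepsilon$ coincides with the It\^{o} stochastic integral $\int_0^t S_{(t-s)/\varepsilon} G_2(X_s^\varepsilon, Y_s^\varepsilon)\,\mathrm{d}W_s$. Since the drift terms agree verbatim, the whole point is an identification of these two pieces; this is essentially a semigroup-valued version of the well-known fact that rough integration against the It\^{o} lift of Brownian motion recovers It\^{o} integration whenever the integrand is adapted.

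First, I would observe that by the chain-rule/composition results for controlled rough paths together with Lemma \ref{wellpose}, the process $Z_s := G_2(X_s^\varepsilon,Y_s^\varepsilon)$ is itself a controlled rough path with respect to $W$, whose Gubinelli derivative (relevant for integration against $\mathbf{W}^{\text{It\^{o}}}$ alone) is $Z_s' = \tfrac{1}{\sqrt{\varepsilon}}\partial_y G_2(X_s^\varepsilon,Y_s^\varepsilon)G_2(X_s^\varepsilon,Y_s^\varepsilon)$. By Lemma \ref{RoughInt} applied with driver $\mathbf{W}^{\text{It\^{o}}}$, the rough integral is then obtained as the limit
\begin{equation*}
\int_0^t S_{(t-s)/\varepsilon}Z_s\,\mathrm{d}\mathbf{W}^{\text{It\^{o}}}_s
=\lim_{|\pi|\to 0}\sum_{[u,v]\in\pi} S_{(t-u)/\varepsilon}\bigl(Z_u W_{u,v}+Z_u'\mathbb{W}^{\text{It\^{o}}}_{u,v}\bigr),
\end{equation*}
in $\mathcal H_\gamma$, along any partition $\pi$ of $[0,t]$.

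Next I would split the Riemann sum into its two contributions. The first-order piece $\sum_{[u,v]}S_{(t-u)/\varepsilon}Z_u W_{u,v}$ converges, as $|\pi|\to 0$, to the It\^{o} integral $\int_0^t S_{(t-s)/\varepsilon}G_2(X_s^\varepsilon,Y_s^\varepsilon)\,\mathrm{d}W_s$ in $L^2(\Omega;\mathcal H_\gamma)$. This is the standard Hilbert-space It\^{o} integration result applied to the $\{\mathcal{F}_t\}$-adapted, continuous $\mathcal{H}_{\gamma-\sigma}^{d_2}$-valued integrand $s\mapsto S_{(t-s)/\varepsilon}G_2(X_s^\varepsilon,Y_s^\varepsilon)$, whose square-integrability follows from boundedness of $G_2$ together with the semigroup estimate $|S_r|_{\mathcal{L}(\mathcal{H}_{\gamma-\sigma};\mathcal{H}_\gamma)}\lesssim r^{-\sigma}$ and $\sigma<\alpha/2<1/2$ so that the corresponding integral over $[0,t]$ is finite.

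The main point, and the step I expect to require the most care, is to show that the compensator sum $\mathcal{I}_\pi:=\sum_{[u,v]\in\pi}S_{(t-u)/\varepsilon}Z_u'\,\mathbb{W}^{\text{It\^{o}}}_{u,v}$ vanishes in $L^2(\Omega;\mathcal H_\gamma)$ as $|\pi|\to 0$. Here I would use crucially the It\^{o} definition $\mathbb{W}^{\text{It\^{o}}}_{u,v}=\int_u^v W_{u,r}\otimes\mathrm{d}W_r$, which implies $\mathbb{E}[\mathbb{W}^{\text{It\^{o}}}_{u,v}\mid\mathcal{F}_u]=0$ and, by It\^{o}'s isometry, $\mathbb{E}[|\mathbb{W}^{\text{It\^{o}}}_{u,v}|^2]\lesssim (v-u)^2$. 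Combined with $\mathcal{F}_u$-measurability of $Z_u'$, the cross terms vanish under $\mathbb{E}$ and hence
\begin{equation*}
\mathbb{E}\bigl[|\mathcal{I}_\pi|_{\mathcal H_\gamma}^2\bigr]\lesssim \sum_{[u,v]\in\pi}|S_{(t-u)/\varepsilon}|_{\mathcal L(\mathcal{H}_{\gamma-\sigma};\mathcal H_\gamma)}^2\,\mathbb{E}\bigl[|Z_u'|_{\mathcal H_{\gamma-\sigma}^{d_2\times d_2}}^2\bigr](v-u)^2\lesssim |\pi|\int_0^t (t-u)^{-2\sigma}\,\mathrm{d}u,
\end{equation*}
which tends to zero since $2\sigma<1$. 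Combining the two steps, the rough integral coincides with the It\^{o} integral (first in $L^2$, then up to extracting a subsequence that converges pathwise in $\mathcal H_\gamma$ to the rough integral limit), yielding the identity in \eqref{eqY}.
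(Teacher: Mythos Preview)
Your argument is correct and is precisely the standard identification of rough and It\^{o} integration against $\mathbf{W}^{\text{It\^{o}}}$ in the semilinear setting; the paper itself omits the proof and refers to \cite[Lemma~3.3]{LLPX25}, whose argument is the one you outline. Two minor points of bookkeeping: (i) the Gubinelli derivative $Z'_u$ naturally lives in $\mathcal{H}_{\gamma-\sigma-\alpha}^{d_2\times d_2}$ rather than $\mathcal{H}_{\gamma-\sigma}^{d_2\times d_2}$, so in the compensator bound you should use $|S_{(t-u)/\varepsilon}|_{\mathcal L(\mathcal{H}_{\gamma-\sigma-\alpha};\mathcal H_\gamma)}\lesssim ((t-u)/\varepsilon)^{-(\sigma+\alpha)}$; since $2(\sigma+\alpha)$ need not be $<1$, it is cleanest to carry out the identification in $\mathcal H_{\gamma-2\alpha}$ (or any space where the semigroup singularity is integrable) and then use that the rough integral is already known to converge in $\mathcal H_\gamma$; (ii) adaptedness of $Z_u,Z'_u$ follows from adaptedness of $(B_{0,\cdot},\mathbb B_{0,\cdot})$ and $W$ together with the pathwise construction of $(X^\varepsilon,Y^\varepsilon)$, which you implicitly use and which is worth stating.
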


The following an energy estimate on Eq. (\ref{eqY}) can be proved as in \cite[Lemma 4.1]{LLPX25}. We omit the details here.
\begin{lemma}\label{avlemma1}
	For every $(x,y) \in \mathcal H_\gamma \times \mathcal H_\gamma$ and $\varepsilon >0$ small enough, there exists a constant $C_T> 0$ such that
	\begin{equation}\label{Yest}
		\sup_{t\in[0,T]}\mathbb{E}|Y_{t}^{\varepsilon}|_{\mathcal H_\gamma }^4
		\leq  C_T(1+| x |_{\mathcal H_\gamma }^4+| y |_{\mathcal H_\gamma }^4).
	\end{equation}
\end{lemma}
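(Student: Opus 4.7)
The plan is to derive an energy estimate by applying It\^{o}'s formula to $|Y_t^\varepsilon|_{\mathcal H_\gamma}^4$ and exploiting the dissipativity condition $(\mathbf{H5})$ to absorb the $O(1/\varepsilon)$ contributions from the drift and quadratic variation. Since $Y^\varepsilon$ is only a mild solution, a direct application of It\^{o}'s formula is not immediate; I would first justify it by Galerkin projection onto the first $N$ eigenvectors $\{e_n\}_{n\le N}$ supplied by $(\mathbf{H4})$, apply the finite-dimensional It\^{o} formula to the approximation, and then pass to the limit using the Lipschitz and growth bounds in $(\mathbf{H1})$ and $(\mathbf{H3})$ together with standard stochastic-convolution estimates.

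Once It\^{o}'s formula is in force, the key computation yields, formally,
\begin{align*}
\mathrm{d}|Y_t^\varepsilon|_{\mathcal H_\gamma}^4
= &\ \frac{4}{\varepsilon}|Y_t^\varepsilon|_{\mathcal H_\gamma}^2\,\langle Y_t^\varepsilon,\,L Y_t^\varepsilon+F_2(X_t^\varepsilon,Y_t^\varepsilon)\rangle_{\mathcal H_\gamma}\,\mathrm{d}t\\
&\ +\frac{2}{\varepsilon}|Y_t^\varepsilon|_{\mathcal H_\gamma}^2\,\|G_2(X_t^\varepsilon,Y_t^\varepsilon)\|_{\mathcal H_\gamma^{d_2}}^2\,\mathrm{d}t
+\frac{4}{\varepsilon}\sum_{k=1}^{d_2}\bigl|\langle Y_t^\varepsilon,G_2(X_t^\varepsilon,Y_t^\varepsilon)e_k\rangle_{\mathcal H_\gamma}\bigr|^2\mathrm{d}t+\mathrm{d}\mathscr{M}_t,
\end{align*}
where $\mathscr{M}$ is a local martingale. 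Taking expectation, applying $\langle Y,LY\rangle_{\mathcal H_\gamma}\le -\lambda_1|Y|_{\mathcal H_\gamma}^2$ from $(\mathbf{H4})$, the Lipschitz bound $|F_2(X,Y)-F_2(X,0)|_{\mathcal H_\gamma}\le L_{F_2}|Y|_{\mathcal H_\gamma}$ from $(\mathbf{H1})$, the analogous growth bound $\|G_2(X,Y)\|\le C(1+|X|_{\mathcal H_\gamma})+L_{G_2}|Y|_{\mathcal H_\gamma}$ from $(\mathbf{H3})$, and the Cauchy--Schwarz estimate $\sum_k|\langle Y,G_2 e_k\rangle|^2\le |Y|_{\mathcal H_\gamma}^2\|G_2\|^2$, together with Young's inequality with a small parameter $\eta>0$ to split cross terms, I obtain
\[
\frac{\mathrm{d}}{\mathrm{d}t}\mathbb E|Y_t^\varepsilon|_{\mathcal H_\gamma}^4
\le -\frac{4}{\varepsilon}\bigl(\lambda_1-L_{F_2}-3L_{G_2}^2-\eta\bigr)\mathbb E|Y_t^\varepsilon|_{\mathcal H_\gamma}^4
+\frac{C_\eta}{\varepsilon}\bigl(1+\mathbb E|X_t^\varepsilon|_{\mathcal H_\gamma}^4\bigr).
\]

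Assumption $(\mathbf{H5})$ guarantees that for $\eta$ small enough the coefficient $\kappa:=4(\lambda_1-L_{F_2}-3L_{G_2}^2-\eta)$ is strictly positive, so Gronwall's inequality gives
\[
\mathbb E|Y_t^\varepsilon|_{\mathcal H_\gamma}^4
\le e^{-\kappa t/\varepsilon}|y|_{\mathcal H_\gamma}^4
+\frac{C_\eta}{\kappa}\sup_{s\in[0,T]}\bigl(1+\mathbb E|X_s^\varepsilon|_{\mathcal H_\gamma}^4\bigr).
\]
Taking the supremum in $t$ and combining with the pathwise bound $\|X^\varepsilon\|_{\infty,\mathcal H_\gamma}\le C_{T,\varrho_\alpha(\mathbf B)}(1+|x|_{\mathcal H_\gamma})$ from Lemma~\ref{slowest}, whose fourth moment is finite by $(\mathbf{H6})$ and Lemma~\ref{mixrandom}, yields the claimed bound $\sup_{t\in[0,T]}\mathbb E|Y_t^\varepsilon|_{\mathcal H_\gamma}^4\le C_T(1+|x|_{\mathcal H_\gamma}^4+|y|_{\mathcal H_\gamma}^4)$.

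The main obstacle is the rigorous justification of It\^{o}'s formula for the mild solution of equation \eqref{eqY}, since $LY_t^\varepsilon$ need not lie in $\mathcal H_\gamma$. I would address this via the Galerkin approximation in the eigenbasis of $(\mathbf{H4})$ (which diagonalises $L$ across the whole scale $(\mathcal H_\gamma)_{\gamma\in\mathbf R}$), verify the a priori bounds uniformly in $N$, and pass to the limit in $N\to\infty$; the stopping-time argument to remove the local-martingale character of $\mathscr{M}$ is routine once the uniform moment bound is in hand.
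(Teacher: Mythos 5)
The paper itself omits the proof (deferring to [LLPX25, Lemma~4.1]), and your route --- It\^{o}'s formula for $|Y^\varepsilon_t|_{\mathcal H_\gamma}^4$ justified by Galerkin projection in the eigenbasis of $(\mathbf{H4})$, dissipativity $\langle Y,LY\rangle_{\mathcal H_\gamma}\le-\lambda_1|Y|^2_{\mathcal H_\gamma}$, Young's inequality, $(\mathbf{H5})$ to make the coefficient negative, and Gronwall --- is exactly the standard energy-estimate argument that reference uses, so the core of your proposal is sound and in line with the intended proof. (Minor point: your quadratic-variation term should carry the Hilbert--Schmidt sum $\sum_{k\le d_2}|G_2e_k|^2_{\mathcal H_\gamma}$ rather than the operator norm $|G_2|_{\mathcal H_\gamma^{d_2}}^2$; for finite $d_2$, and since $G_2\in\mathcal C_b^3$ is bounded, this only affects constants and does not threaten the sign condition coming from $(\mathbf{H5})$.)

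The one step that does not hold as written is the final one: you bound the inhomogeneity by $\sup_{s\in[0,T]}\mathbb E|X^\varepsilon_s|^4_{\mathcal H_\gamma}$ and then invoke Lemma~\ref{slowest} ``whose fourth moment is finite by $(\mathbf{H6})$ and Lemma~\ref{mixrandom}.'' Lemma~\ref{slowest} gives a \emph{pathwise} bound $\|X^\varepsilon\|_{\infty,\mathcal H_\gamma}\le C_{T,\varrho_\alpha(\mathbf B)}(1+|x|_{\mathcal H_\gamma})$ whose constant, by the concatenation in its proof, is of order $2^{N}C^{N}$ with $N\sim T\bigl(4C(1+\varrho_\alpha(\mathbf B))^3\bigr)^{1/(\alpha-\sigma)}$, i.e.\ it grows \emph{exponentially} in a power of $\varrho_\alpha(\mathbf B)$. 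Assumption $(\mathbf{H6})$ only provides polynomial moments $\mathbb E[\interleave\mathbf B\interleave_\alpha^p]<\infty$, which do not imply integrability of such an exponential functional, so $\mathbb E\|X^\varepsilon\|_{\infty,\mathcal H_\gamma}^4<\infty$ (let alone with a deterministic constant $C_T$ as the lemma asserts) does not follow from what you cite. To close this you need a sharper a priori bound on the slow component: since $F_1$ satisfies (\ref{F1Bound}) and $G_1\in\mathcal C_b^3$ is bounded, each subinterval in the concatenation contributes an \emph{additive} constant rather than a multiplicative factor, yielding $\|X^\varepsilon\|_{\infty,\mathcal H_\gamma}\le |x|_{\mathcal H_\gamma}+P(\varrho_\alpha(\mathbf B))$ with $P$ polynomial, after which $(\mathbf{H6})$ does give the required fourth moment; alternatively one must keep the constant random (conditioning on $\mathbf B$), but then the statement with a deterministic $C_T$ is not what you have proved. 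As it stands, this integrability step is a genuine gap in your argument, not merely a notational one.
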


\subsection{Frozen equation}

Next, in order to define the coefficient $\bar{F}_1$ in the averaged equation (\ref{aveq}), we consider the frozen equation associated with Eq. (\ref{eqY}) for a fixed slow component $x\in \mathcal H_\gamma$.
More precisely, we will present the existence and uniqueness of invariant probability measures as well as exponential ergodicity with respect to the frozen equation.

For a fixed slow component $x \in \mathcal H_\gamma$, the frozen equation corresponding to Eq. (\ref{eqY}) is given by
\begin{equation*}
	\left\{ \begin{aligned}
		\mathrm{d} Y_t&=\big[L Y_t+F_2(x,Y_t)\big] \mathrm{d} t+G_2(x,Y_t) \mathrm{d} \widetilde{W}_t,\\
		Y_0&= y \in \mathcal H_\gamma ,
	\end{aligned} \right.
\end{equation*}
where $\{\widetilde{W}_t\}_{t\geq0}$ is a $d_2$-dimensional Brownian motion defined on another probability space $(\widetilde{\Omega} ,\widetilde{\mathcal F},\{\widetilde{\mathcal F}_t\}_{t\geq0},\widetilde{\mathbb P})$.

It is known that the frozen equation admits a unique strong solution, denoted by $\{Y_t^{x,y}\}_{t\geq 0}$, which is a time homogeneous Markov process.
Let $\{P^x_t\}_{t\geq0}$ be a Markov transition semigroup of process $\{Y^{x,y}_t\}_{t\geq 0}$, i.e., for any bounded measurable function $f:\mathcal{H}_\gamma \rightarrow \mathcal{H}_\gamma $ we have
$$P^x_t f(y):=\widetilde{\mathbb E} [f(Y^{x,y}_t)],~~~~ y \in \mathcal{H}_\gamma,~t>0.$$
Here, $\widetilde{\mathbb E}$ denotes the expectation operator corresponding to $\widetilde{\mathbb P}$.
According to \cite[Theorem 2.2]{HLL20}, for each fixed $x \in \mathcal H_\gamma$, the transition semigroup $\{P^x_t\}_{t\geq0}$ admits a unique invariant probability measure $\mu^x$.

Following the similar argument as in the proof of \cite[Lemma 4.5]{LLPX25},
we can derive the following estimates for the solution to the frozen equation.
\begin{lemma}\label{lem3.4}
	For any given $x,x_1,x_2,y\in \mathcal{H}_\gamma$, there exists a constant $C>0$ such that
	\begin{align}
		&\sup_{t\geq0}\widetilde{\mathbb E}|Y_t^{x_1,y}-Y_t^{x_2,y}|_{\mathcal{H}_\gamma}^2\leq C|x_1-x_2|_{\mathcal{H}_\gamma }^2, \label{frozendifference}\\
		&\sup_{t\geq0}\widetilde{\mathbb E}|Y^{x,y}_t|_{\mathcal{H}_\gamma}^2\leq C(1+|x|_{\mathcal{H}_\gamma }^2+|y|_{\mathcal{H}_\gamma}^2).\label{frozenest}
	\end{align}
\end{lemma}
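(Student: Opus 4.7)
The plan is to apply It\^o's formula to the norm squared of the relevant process, exploit the dissipativity of $L$ through $(\mathbf{H4})$ together with the strict gap condition $(\mathbf{H5})$, and then invoke Gronwall's inequality to obtain the uniform-in-time bound. Both estimates follow the same template.

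For the difference estimate $(\ref{frozendifference})$, set $Z_t := Y_t^{x_1,y}-Y_t^{x_2,y}$, which satisfies
\begin{equation*}
dZ_t = \big[LZ_t + \Delta F_2(t)\big]dt + \Delta G_2(t)\,d\widetilde{W}_t, \quad Z_0 = 0,
\end{equation*}
where $\Delta F_2(t) := F_2(x_1, Y_t^{x_1,y}) - F_2(x_2, Y_t^{x_2,y})$ and $\Delta G_2(t)$ is defined analogously. Applying It\^o's formula to $|Z_t|_{\mathcal{H}_\gamma}^2$ produces the drift
\begin{equation*}
2\langle Z_t, LZ_t\rangle_{\mathcal{H}_\gamma} + 2\langle Z_t, \Delta F_2(t)\rangle_{\mathcal{H}_\gamma} + |\Delta G_2(t)|_{\mathcal{H}_\gamma^{d_2}}^2,
\end{equation*}
up to a local martingale. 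The dissipativity $\langle Z_t, LZ_t\rangle_{\mathcal{H}_\gamma} \leq -\lambda_1 |Z_t|_{\mathcal{H}_\gamma}^2$ from $(\mathbf{H4})$, combined with the Lipschitz bounds $(\ref{F2Lip})$ and $(\mathbf{H3})$ and Young's inequality with a small parameter $\delta > 0$, estimates this drift from above by
\begin{equation*}
-\big[2\lambda_1 - 2L_{F_2} - (2+\delta)L_{G_2}^2 - \delta\big]|Z_t|_{\mathcal{H}_\gamma}^2 + C_\delta |x_1-x_2|_{\mathcal{H}_\gamma}^2.
\end{equation*}
Condition $(\mathbf{H5})$ provides just enough slack so that $\delta$ can be chosen to make the bracketed coefficient equal to some $\eta > 0$. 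Taking expectations and invoking Gronwall's inequality then gives $\widetilde{\mathbb E}|Z_t|_{\mathcal{H}_\gamma}^2 \leq (C_\delta/\eta)|x_1-x_2|_{\mathcal{H}_\gamma}^2$ uniformly in $t \geq 0$.

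The bound $(\ref{frozenest})$ is obtained by the same scheme applied to $|Y_t^{x,y}|_{\mathcal{H}_\gamma}^2$. Splitting $F_2(x,y) = F_2(x,0) + [F_2(x,y)-F_2(x,0)]$ (and similarly for $G_2$), together with the Lipschitz conditions yielding $|F_2(x,0)|_{\mathcal{H}_\gamma} + |G_2(x,0)|_{\mathcal{H}_\gamma^{d_2}} \leq C(1+|x|_{\mathcal{H}_\gamma})$, produces an ODE inequality
\begin{equation*}
\frac{d}{dt}\widetilde{\mathbb E}|Y_t^{x,y}|_{\mathcal{H}_\gamma}^2 \leq -\eta\, \widetilde{\mathbb E}|Y_t^{x,y}|_{\mathcal{H}_\gamma}^2 + C(1+|x|_{\mathcal{H}_\gamma}^2),
\end{equation*}
from which Gronwall together with the initial condition $Y_0^{x,y} = y$ delivers the claim.

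The main technical obstacle is the rigorous justification of It\^o's formula, since $Y^{x,y}$ is only a mild solution and $LZ_t \notin \mathcal{H}_\gamma$ in general. The standard resolution, available thanks to the eigenbasis $\{e_n\}$ furnished by $(\mathbf{H4})$, is a spectral Galerkin approximation: on each finite-dimensional invariant subspace the classical It\^o formula applies, the estimates above transfer to the approximations with constants independent of the truncation index, and one then passes to the limit via continuity of the mild solution together with dominated convergence.
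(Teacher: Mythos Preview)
Your proposal is correct and follows exactly the standard dissipativity argument that the paper has in mind: the paper omits the proof and refers to \cite[Lemma~4.5]{LLPX25}, which proceeds precisely via It\^o's formula for $|\cdot|_{\mathcal H_\gamma}^2$, the spectral gap from $(\mathbf{H4})$--$(\mathbf{H5})$, the Lipschitz bounds on $F_2,G_2$, and Gronwall. Your identification of the Galerkin step to justify It\^o's formula on the mild solution is also the standard device used in that reference, so there is nothing to add.
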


We now provide the following exponential ergodicity, the proof of which is similar to \cite[Proposition 4.2]{HLL22}. We omit the details here.

\begin{lemma}\label{ergo}
For any $(x,y) \in \mathcal H_\gamma \times \mathcal H_\gamma$, there exist constants $\lambda_1-L_{F_2}-L_{G_2}^2>0$ and $C> 0$ such that
$$|\widetilde{\mathbb E}F_1(x, Y_t^{x,y})-\bar{F}_1(x)|_{\mathcal H_{\gamma-\alpha} }\leq C e^{ -(\lambda_1-L_{F_2}-L_{G_2}^2) t} (1+| x |_{\mathcal H_\gamma }+| y |_{\mathcal H_\gamma }),$$
where $\bar{F}_1 (x)=\displaystyle\int_{\mathcal{H}_\gamma} F_1 (x,y) \mu^x (\mathrm{d}y)$ for $x\in\mathcal{H}_\gamma$.
\end{lemma}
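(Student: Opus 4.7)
The plan is to reduce the desired estimate, via the invariance of $\mu^x$, to an exponential contraction between two copies of the frozen equation started from different initial points, and then integrate this against the invariant measure. First I would exploit the invariance of $\mu^x$ under the Markov semigroup $\{P_t^x\}_{t\geq 0}$ to write
\[
\bar{F}_1(x) = \int_{\mathcal H_\gamma} F_1(x, z)\,\mu^x(\mathrm{d} z) = \int_{\mathcal H_\gamma} \widetilde{\mathbb E} F_1(x, Y_t^{x,z})\,\mu^x(\mathrm{d} z),
\]
so that
\[
\widetilde{\mathbb E} F_1(x, Y_t^{x,y}) - \bar{F}_1(x) = \int_{\mathcal H_\gamma} \widetilde{\mathbb E}\big[F_1(x, Y_t^{x,y}) - F_1(x, Y_t^{x,z})\big]\,\mu^x(\mathrm{d} z).
\]
Taking the $\mathcal H_{\gamma-\alpha}$-norm inside the integral and applying the Lipschitz bound (\ref{F1Lip}) with $x_1=x_2=x$, it suffices to control $\widetilde{\mathbb E}|Y_t^{x,y} - Y_t^{x,z}|_{\mathcal H_\gamma}$ uniformly and then integrate against $\mu^x$.

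The central technical step is to prove the exponential contraction
\[
\widetilde{\mathbb E}\,|Y_t^{x,y} - Y_t^{x,z}|_{\mathcal H_\gamma}^2 \leq e^{-2(\lambda_1 - L_{F_2} - L_{G_2}^2)t}\,|y - z|_{\mathcal H_\gamma}^2.
\]
To obtain it, I would apply It\^o's formula to $t\mapsto |Y_t^{x,y}-Y_t^{x,z}|_{\mathcal H_\gamma}^2$. The drift terms contribute
\[
2\langle L(Y_t^{x,y}-Y_t^{x,z}), Y_t^{x,y}-Y_t^{x,z}\rangle_{\mathcal H_\gamma} + 2\langle F_2(x,Y_t^{x,y}) - F_2(x,Y_t^{x,z}), Y_t^{x,y}-Y_t^{x,z}\rangle_{\mathcal H_\gamma},
\]
which by the dissipativity of $L$ with smallest eigenvalue $\lambda_1$ (from $(\mathbf{H4})$) and the Lipschitz estimate (\ref{F2Lip}) on $F_2$ in $y$ is at most $-2(\lambda_1 - L_{F_2})|Y_t^{x,y}-Y_t^{x,z}|_{\mathcal H_\gamma}^2$. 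The It\^o correction contributes at most $L_{G_2}^2|Y_t^{x,y}-Y_t^{x,z}|_{\mathcal H_\gamma}^2$ by $(\mathbf{H3})$. Taking expectations, the martingale part vanishes, and a direct application of Gr\"onwall's inequality yields the contraction; the positivity of the rate is guaranteed by $(\mathbf{H5})$, which indeed implies $\lambda_1 - L_{F_2} - L_{G_2}^2 > 0$.

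To conclude, I would combine the above contraction with the Cauchy–Schwarz inequality and the second moment bound on $\mu^x$, namely $\int_{\mathcal H_\gamma}|z|_{\mathcal H_\gamma}^2\,\mu^x(\mathrm{d} z) \leq C(1+|x|_{\mathcal H_\gamma}^2)$, which follows from (\ref{frozenest}) by invariance (letting $t\to\infty$ in the tightness argument). This gives
\[
|\widetilde{\mathbb E} F_1(x, Y_t^{x,y}) - \bar{F}_1(x)|_{\mathcal H_{\gamma-\alpha}} \leq C\,e^{-(\lambda_1 - L_{F_2} - L_{G_2}^2)t}\int_{\mathcal H_\gamma}(|y|_{\mathcal H_\gamma} + |z|_{\mathcal H_\gamma})\,\mu^x(\mathrm{d} z),
\]
which is bounded by $Ce^{-(\lambda_1 - L_{F_2} - L_{G_2}^2)t}(1+|x|_{\mathcal H_\gamma}+|y|_{\mathcal H_\gamma})$, as required.

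The main obstacle I expect is the rigorous justification of the It\^o formula application: since $L$ is unbounded and $Y^{x,y}$ is only a mild solution in $\mathcal H_\gamma$, the identity $\langle Lu,u\rangle_{\mathcal H_\gamma} \leq -\lambda_1 |u|_{\mathcal H_\gamma}^2$ cannot be used pointwise. The standard remedy is to work with a Yosida approximation $L_n := nL(n-L)^{-1}$ of $L$, for which $Y^{x,y,n}$ lives in the domain of $L$, apply It\^o's formula there, and pass to the limit $n\to\infty$ using the convergence $Y^{x,y,n}\to Y^{x,y}$ in $L^2(\widetilde{\Omega};\mathcal C([0,T];\mathcal H_\gamma))$. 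The dissipativity inequality survives the limit, yielding the desired contraction.
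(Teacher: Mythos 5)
Your proposal is correct and follows essentially the same route the paper relies on (it omits the proof and points to \cite[Proposition 4.2]{HLL22}): invariance of $\mu^x$ plus the Lipschitz bound (\ref{F1Lip}) reduce the claim to an It\^o-formula/Gr\"onwall contraction between two copies of the frozen equation, combined with the moment bound on $\mu^x$ inherited from (\ref{frozenest}), with the Yosida approximation handling the unbounded $L$. Note only that your drift-plus-correction computation actually gives the rate $2(\lambda_1-L_{F_2})-L_{G_2}^2$, which is stronger than the stated $2(\lambda_1-L_{F_2}-L_{G_2}^2)$, so the claimed bound follows a fortiori and (H5) indeed guarantees positivity of the exponent.
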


\section{Proof of main result}\label{sect4}

\subsection{A time increment estimate and auxiliary process}\label{fro}

The time discretization technique proposed by Khasminskii in \cite{K68} will be used in the proof of Theorem \ref{MainRes}.
To this end, we provide an estimate of time increments of the slow component to Eq. (\ref{eq1}) and introduce an auxiliary process.

\begin{lemma}\label{Xdifflem}
	For $T>0$ and $\zeta \in (\frac \alpha2,\alpha-\sigma)$, there exists a constant $C_{T,\varrho_{\alpha}(\mathbf{B})}>0$ such that for any $x \in \mathcal{H}_{\gamma+\zeta}, y \in \mathcal H_\gamma$ and $\varepsilon, \delta >0$ small enough, we have
	$$\sup_{t \in [0,T]} \mathbb E |X_t^\varepsilon-X_{t(\delta)}^\varepsilon|_{\mathcal H_\gamma }^4
	\leq \delta^{4\zeta}  C_{T,\varrho_{\alpha}(\mathbf{B})} (1+|x|_{\mathcal H_{\gamma+\zeta} }^4+|y|_{\mathcal H_\gamma}^4),$$
	where $t(\delta):=[\frac{t}{\delta}]\delta$ with $[x]$ being the floor function, i.e. $[x]$ is the largest integer not exceeding $x$.
\end{lemma}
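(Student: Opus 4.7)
The plan is to decompose $X_t^\varepsilon - X_{t(\delta)}^\varepsilon$ via the mild formulation (\ref{sloweq}), using the identity $S_t = S_{t-t(\delta)} S_{t(\delta)}$ together with $\int_0^t = \int_0^{t(\delta)} + \int_{t(\delta)}^t$, into five pieces:
\begin{align*}
X_t^\varepsilon - X_{t(\delta)}^\varepsilon
=& \; (S_{t-t(\delta)} - \operatorname{id})S_{t(\delta)} x + \int_{t(\delta)}^t S_{t-s} F_1(X_s^\varepsilon, Y_s^\varepsilon)\,\mathrm{d}s \\
& + (S_{t-t(\delta)} - \operatorname{id}) \int_0^{t(\delta)} S_{t(\delta)-s} F_1(X_s^\varepsilon, Y_s^\varepsilon)\,\mathrm{d}s \\
& + \int_{t(\delta)}^t S_{t-s} G_1(X_s^\varepsilon)\,\mathrm{d}\mathbf B_s
+ (S_{t-t(\delta)} - \operatorname{id}) \int_0^{t(\delta)} S_{t(\delta)-s} G_1(X_s^\varepsilon)\,\mathrm{d}\mathbf B_s .
\end{align*}
I will estimate each piece pathwise in $\mathcal H_\gamma$ by $\delta^\zeta$ (or faster) times a polynomial in $\varrho_\alpha(\mathbf B)$ and $|x|_{\mathcal H_{\gamma+\zeta}}$, and then take the fourth moment using the all-moments bound on $\varrho_\alpha(\mathbf B)$ from $(\mathbf H6)$ and Lemma \ref{mixrandom}.

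The initial-datum term is controlled by (\ref{group2}): $|(S_{t-t(\delta)} - \operatorname{id}) S_{t(\delta)} x|_{\mathcal H_\gamma} \lesssim \delta^\zeta |x|_{\mathcal H_{\gamma+\zeta}}$. The two drift pieces use (\ref{group1}) together with the boundedness of $F_1:\mathcal H_\gamma \times \mathcal H_\gamma \to \mathcal H_{\gamma-\alpha}$: the local piece is bounded by $\int_{t(\delta)}^t (t-s)^{-\alpha}\mathrm{d}s \lesssim \delta^{1-\alpha}$, while for the translated piece I apply (\ref{group2}) at exponent $\zeta$ and then (\ref{group1}) at exponent $\alpha+\zeta$, which is summable since $\alpha + \zeta < 1$ (by $\zeta < \alpha - \sigma < \alpha < 1/2$). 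Since $\zeta < \alpha < 1-\alpha$, both contributions are $O(\delta^\zeta)$ for $\delta \in (0,1]$. Crucially, the boundedness of $F_1$ means no $Y^\varepsilon$-dependence enters these bounds, so the $|y|^4$ on the right-hand side of the claim appears only as a harmless overestimate.

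The crux is the rough-convolution part. For the local piece I apply Lemma \ref{RoughInt} to the controlled rough path $(G_1(X^\varepsilon), DG_1(X^\varepsilon)G_1(X^\varepsilon)) \in \mathcal D_{B,\gamma-\sigma}^{2\alpha}$ (available by Lemma \ref{funCRP}), choosing $\beta = 2\alpha + \sigma$ in (\ref{IntBound}) so that the remainder lands precisely in $\mathcal H_\gamma$ with rate $\delta^{\alpha-\sigma}$; this is legitimate because $\sigma < \alpha/2$ forces $\beta < 3\alpha$. The two leading terms $S_{t-t(\delta)} G_1(X_{t(\delta)}^\varepsilon) B_{t(\delta),t}$ and $S_{t-t(\delta)} DG_1(X_{t(\delta)}^\varepsilon) G_1(X_{t(\delta)}^\varepsilon) \mathbb B_{t(\delta),t}$ are estimated by (\ref{group1}) with exponents $\sigma$ and $\alpha+\sigma$ respectively, combined with the Hölder bounds $|B_{t(\delta),t}| \lesssim \delta^\alpha |B|_\alpha$ and $|\mathbb B_{t(\delta),t}| \lesssim \delta^{2\alpha} |\mathbb B|_{2\alpha}$, again yielding $\delta^{\alpha-\sigma}$. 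For the translated piece I apply (\ref{group2}) at exponent $\zeta$ to extract $\delta^\zeta$, and then invoke Lemma \ref{stab} with $\sigma' := \zeta + \sigma$, which lies in $[0,\alpha)$ exactly because $\zeta < \alpha - \sigma$, to control the remaining integral in $\mathcal H_{\gamma+\zeta}$ uniformly on $[0,T]$ by $C_{T,\varrho_\alpha(\mathbf B)}(1+|x|_{\mathcal H_\gamma})$ (via Lemmas \ref{funCRP} and \ref{slowest}). Since $\alpha-\sigma > \zeta$ and $1-\alpha > \zeta$, all five summands are dominated by $\delta^\zeta$ times a polynomial in $\varrho_\alpha(\mathbf B)$ and $|x|_{\mathcal H_{\gamma+\zeta}}$; raising to the fourth power and taking expectation using $(\mathbf H6)$ then gives the claim. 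The main obstacle is this rough-convolution step, where the spatial regularity loss $\sigma$ from $G_1$ must be absorbed at the same time that a spatial gain $\zeta$ is reserved for the semigroup increment $S_{t-t(\delta)} - \operatorname{id}$—which is precisely the reason for the window $\zeta \in (\alpha/2, \alpha - \sigma)$.
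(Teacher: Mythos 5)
Your proposal is correct and follows essentially the same route as the paper: the identical five-term decomposition, (\ref{group2})/(\ref{group1}) for the initial-datum and drift pieces, and (\ref{IntBound}) with the germ subtracted (rate $\delta^{\alpha-\sigma}$) for the local rough integral, with $\zeta<\alpha-\sigma$ doing exactly the work you describe for the translated one. The only differences are cosmetic—using the boundedness (\ref{F1Bound}) instead of the Lipschitz growth of $F_1$, and invoking Lemma \ref{stab} with gain $\zeta+\sigma$ for the last piece rather than redoing the sewing estimate by hand as the paper does.
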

\begin{proof}
	By performing some simple calculations, we obtain the following result:
	\begin{align*}
		X_t^\varepsilon-X_{t(\delta)}^\varepsilon &=(S_{t-{t(\delta)}}-\mathrm{id} ) S_{{t(\delta)}}x+\int_{t(\delta)}^t S_{t-s} F_1(X_s^\varepsilon, Y_s^\varepsilon) \mathrm{d} s \nonumber\\
		&~~~+ (S_{t-{t(\delta)}}-\mathrm{id} )\int_0^{t(\delta)} S_{t(\delta)-s} F_1(X_s^\varepsilon, Y_s^\varepsilon) \mathrm{d} s
		\nonumber\\
		&~~~+\int_{t(\delta)}^t S_{t-s} G_1(X_s^\varepsilon) \mathrm{d} \mathbf B_s  +(S_{t-{t(\delta)}}-\mathrm{id} )\int_0^{t(\delta)} S_{t(\delta)-s} G_1(X_s^\varepsilon) \mathrm{d} \mathbf B_s  \nonumber\\
		&=: K_1+K_2+K_3+K_4+K_5.
	\end{align*}
	For $K_1$, using (\ref{group2}) we have
	\begin{equation}\label{Xt}
		|(S_{t-{t(\delta)}}-\mathrm{id} ) S_{{t(\delta)}}x|_{\mathcal H_\gamma }^4
		\lesssim |t-t(\delta)|^{4\zeta} |S_{{t(\delta)}}x|_{\mathcal H_{\gamma+\zeta} }^4
		\lesssim \delta^{4\zeta}  |x|_{\mathcal H_{\gamma+\zeta} }^4.
	\end{equation}
	For $K_2$, by (\ref{group1}), (\ref{F1Lip}) and H\"{o}lder's inequality we get
	\begin{align}\label{Xth1}
		\mathbb E \left |\int_{t(\delta)}^t S_{t-s} F_1(X_s^\varepsilon, Y_s^\varepsilon) \mathrm{d} s \right|_{\mathcal H_\gamma }^4
		&\lesssim \mathbb E \left (  \int_{t(\delta)}^t |t-s|^{-\alpha} | F_1(X_s^\varepsilon, Y_s^\varepsilon)|_{\mathcal H_{\gamma-\alpha} } \mathrm{d} s \right)^4 \nonumber\\
		& \lesssim |t-t(\delta)|^{3-4\alpha}  \mathbb E\int_{t(\delta)}^t (1+|X_s^\varepsilon|_{\mathcal H_\gamma}^4+|Y_s^\varepsilon|_{\mathcal H_\gamma}^4) \mathrm{d} s \nonumber\\
		&\lesssim \delta^{4(1-\alpha)} \left(1+\mathbb E\|X^\varepsilon\|_{\infty,\mathcal H_\gamma}^4+\sup_{s \in [0,T]} \mathbb E|Y_s^\varepsilon|_{\mathcal H_\gamma}^4 \right) \nonumber\\
		&\leq \delta^{4(1-\alpha)} C_{T,\varrho_{\alpha}(\mathbf{B})} (1+|x|_{\mathcal H_\gamma}^4+|y|_{\mathcal H_\gamma}^4),
	\end{align}
	where we used (\ref{X2}) and (\ref{Yest}) for the last inequality.
	For $K_3$, by (\ref{group1}), (\ref{group2}) and (\ref{F1Bound}) we get
	\begin{align}\label{Xth2}
		&~~~ \mathbb E \left |(S_{t-{t(\delta)}}-\mathrm{id} ) \int_0^{t(\delta)} S_{t(\delta)-s} F_1(X_s^\varepsilon, Y_s^\varepsilon) \mathrm{d} s \right|_{\mathcal H_\gamma }^4  \nonumber\\
		&\lesssim |t-t(\delta)|^{4\alpha}  \mathbb E \left | \int_0^{t(\delta)} S_{t(\delta)-s} F_1(X_s^\varepsilon, Y_s^\varepsilon) \mathrm{d} s \right|_{\mathcal H_{\gamma+\alpha} }^4 \nonumber\\
		&\lesssim \delta^{4\alpha} \mathbb E
		\left (  \int_0^{t(\delta)} |t(\delta)-s|^{-2\alpha} | F_1(X_s^\varepsilon, Y_s^\varepsilon)|_{\mathcal H_{\gamma-\alpha} } \mathrm{d} s \right)^4
		\leq C_T  \delta^{4\alpha}.
	\end{align}
	Using (\ref{group1}), (\ref{IntBound}), Lemmas \ref{funCRP} and \ref{slowest} we have
	\begin{align}\label{B1}
		|K_4|_{\mathcal H_\gamma }
		\leq &\left| \int_{t(\delta)}^t S_{t-s} G_1(X_s^\varepsilon) \mathrm{d} \mathbf B_s-S_{t-t(\delta)} G_1(X_{t(\delta)}^\varepsilon)  B_{t(\delta), t}-S_{t-t(\delta)} DG_1(X_{t(\delta)}^\varepsilon)G_1(X_{t(\delta)}^\varepsilon) \mathbb{B}_{t(\delta),t} \right|_{\mathcal H_\gamma } \nonumber\\
		&+ \left|S_{t-t(\delta)} G_1(X_{t(\delta)}^\varepsilon)  B_{t(\delta), t} \right|_{\mathcal H_\gamma }
		+\left|S_{t-t(\delta)} DG_1(X_{t(\delta)}^\varepsilon)G_1(X_{t(\delta)}^\varepsilon) \mathbb{B}_{t(\delta),t} \right|_{\mathcal H_\gamma } \nonumber\\
		\lesssim & |t-t(\delta)|^{\alpha-\sigma} \varrho_{\alpha}(\mathbf{B}) \|G_1(X^\varepsilon),DG_1(X^\varepsilon)G_1(X^\varepsilon) \|_{\mathcal{D}_{B,\gamma-\sigma}^{2\alpha}} \nonumber\\
		&+|G_1(X_{t(\delta)}^\varepsilon)|_{\mathcal H_{\gamma-\sigma}^{d_1}  } |B|_\alpha |t-t(\delta)|^{\alpha-\sigma}+ |DG_1(X_{t(\delta)}^\varepsilon)G_1(X_{t(\delta)}^\varepsilon)|_{\mathcal H_{\gamma-\alpha-\sigma}^{d_1\times d_1} } |\mathbb{B}|_{2\alpha} |t-t(\delta)|^{\alpha-\sigma} \nonumber\\
		\lesssim & \delta^{\alpha-\sigma} \varrho_{\alpha}(\mathbf{B}) \|G_1(X^\varepsilon),DG_1(X^\varepsilon)G_1(X^\varepsilon) \|_{\mathcal{D}_{B,\gamma-\sigma}^{2\alpha}} \nonumber\\
		\lesssim &  \delta^{\alpha-\sigma} (1+\varrho_{\alpha}(\mathbf{B}))^2 \varrho_{\alpha}(\mathbf{B}) \left(1+\|X^\varepsilon,G_1(X^\varepsilon)\|_{\mathcal{D}_{B,\gamma}^{2\alpha}} \right)
		\leq C_{T,\varrho_{\alpha}(\mathbf{B})} (1+|x|_{\mathcal H_\gamma}) \delta^{\alpha-\sigma},
	\end{align}
	where we used Lemma \ref{slowest} for the last inequality.
	By (\ref{group1}), (\ref{group2}), Lemmas \ref{funCRP} and \ref{slowest} we have
	\begin{align}\label{B2}
		|K_5|_{\mathcal H_\gamma }
		\lesssim & |t-t(\delta)|^{\zeta} \left| \int_0^{t(\delta)} S_{t(\delta)-s} G_1(X_s^\varepsilon) \mathrm{d} \mathbf B_s  \right|_{\mathcal H_{\gamma+\zeta} } \nonumber\\
		\lesssim & \delta^\zeta \left| \int_0^{t(\delta)} S_{t(\delta)-s} G_1(X_s^\varepsilon) \mathrm{d} \mathbf B_s -S_{t(\delta)} G_1(x) B_{0,{t(\delta)}}-S_{t(\delta)} DG_1(x)G_1(x)  \mathbb B_{0,{t(\delta)}}  \right|_{\mathcal H_{\gamma+\zeta} } \nonumber\\
		&+\delta^\zeta |t(\delta)|^\alpha |B|_\alpha |S_{t(\delta)} G_1(x)|_{\mathcal H_{\gamma+\zeta}^{d_1} }
		+\delta^\zeta |t(\delta)|^{2\alpha} |\mathbb{B}|_{2\alpha} |S_{t(\delta)} DG_1(x)G_1(x) |_{\mathcal H_{\gamma+\zeta}^{d_1 \times d_1} } \nonumber\\
		\lesssim & \delta^\zeta \varrho_{\alpha}(\mathbf{B}) \|G_1(X^\varepsilon),DG_1(X^\varepsilon)G_1(X^\varepsilon)\|_{\mathcal{D}_{B,\gamma-\sigma}^{2\alpha}} |t(\delta)|^{\alpha-\sigma-\zeta} \nonumber\\
		&+ \delta^\zeta |t(\delta)|^{\alpha-\sigma-\zeta} |B|_\alpha |G_1(x)|_{\mathcal H_{\gamma-\sigma}^{d_1}  }
		+\delta^\zeta |t(\delta)|^{\alpha-\sigma-\zeta} |\mathbb{B}|_{2\alpha}
		|DG_1(x)G_1(x) |_{\mathcal H_{\gamma-\alpha-\sigma}^{d_1 \times d_1} } \nonumber\\
		\lesssim & T^{\alpha-\sigma-\zeta} \delta^\zeta (1+\varrho_{\alpha}(\mathbf{B}))^2 \varrho_{\alpha}(\mathbf{B}) \left(1+\|X^\varepsilon,G_1(X^\varepsilon)\|_{\mathcal{D}_{B,\gamma}^{2\alpha}} \right) \nonumber\\
		&+\delta^\zeta T^{\alpha-\sigma-\zeta} |B|_\alpha (1+|x|_{\mathcal H_\gamma})
		+\delta^\zeta T^{\alpha-\sigma-\zeta} |\mathbb{B}|_{2\alpha} (1+|x|_{\mathcal H_\gamma}) \nonumber\\
		\lesssim & \delta^\zeta C_{T,\varrho_{\alpha}(\mathbf{B})} (1+|x|_{\mathcal H_\gamma}),
	\end{align}
	where we used (\ref{G1}), (\ref{DG1G1}) and $\mathcal H_\gamma \hookrightarrow \mathcal H_{\gamma-\alpha}$ for the fourth inequality.

	Combining the above computations and $\mathcal H_{\gamma+\zeta} \hookrightarrow \mathcal H_\gamma$, we have
	\begin{equation*}
		\mathbb E |X_t^\varepsilon-X_{t(\delta)}^\varepsilon|_{\mathcal H_\gamma }^4 \leq \delta^{4\zeta}  C_{T,\varrho_{\alpha}(\mathbf{B})} (1+|x|_{\mathcal H_{\gamma+\zeta} }^4+|y|_{\mathcal H_{\gamma} }^4).
	\end{equation*}
	The proof is completed.
\end{proof}

Next, we introduce an auxiliary process.
More specifically, we divide the time interval $[0, T]$ into some subintervals of size $\delta$,
where $\delta$ is a positive number depending on $\varepsilon$ and to be chosen later, we then consider
\begin{equation}\label{Auxeq}
	\left\{ \begin{aligned}
		\mathrm{d} \hat{Y}_t^\varepsilon&=\frac 1\varepsilon \big[L \hat{Y}_t^\varepsilon+F_2 (X_{t(\delta)}^\varepsilon,\hat{Y}_t^\varepsilon)\big] \mathrm{d} t+\frac 1{\sqrt{\varepsilon}} G_2(X_{t(\delta)}^\varepsilon,\hat{Y}_t^\varepsilon)\mathrm{d} W_t,\\
		\hat{Y}_0^\varepsilon&=y \in \mathcal{H}_\gamma.
	\end{aligned} \right.
\end{equation}
The definition of $t(\delta)$ is given in Lemma \ref{Xdifflem}. Then, on each subinterval $[k\delta,(k+1)\delta \wedge T]$ with $k\in\mathbf{N}_0$, Eq. (\ref{Auxeq}) is equivalent to
\begin{equation*}
	\hat{Y}_t^\varepsilon=\hat{Y}_{k\delta}^\varepsilon+\frac 1\varepsilon\int_{k\delta}^t L \hat{Y}_s^\varepsilon \mathrm{d} s+\frac 1\varepsilon\int_{k\delta}^t F_2(X_{k\delta}^\varepsilon,\hat{Y}_s^\varepsilon) \mathrm{d} s+\frac 1{\sqrt{\varepsilon}}\int_{k\delta}^tG_2(X_{k\delta}^\varepsilon,\hat{Y}_s^\varepsilon)\mathrm{d} W_s.
\end{equation*}

In order to prove Theorem \ref{MainRes}, we need an estimate of the difference between the solution $Y^\varepsilon$ to Eq. (\ref{eqY}) and the auxiliary process $\hat{Y}^\varepsilon$.
One can refer to \cite[Lemma 4.7]{LLPX25} for the detailed proof.

\begin{lemma}\label{AuxiliaryEstimate}
	For $\zeta \in (\frac \alpha2, \alpha-\sigma)$ and $\varepsilon, \delta >0$ small enough, there exists a constant $C_{T,\varrho_{\alpha}(\mathbf{B})}>0$ independent of $\varepsilon$ and $\delta$ such that for any $x \in \mathcal{H}_{\gamma+\zeta}, y \in \mathcal H_\gamma$ we have
	\begin{equation}\label{AimAuxdifference}
		\sup_{t\in [0,T]}\mathbb E | Y_t^\varepsilon-\hat{Y}_t^\varepsilon|_{\mathcal H_\gamma }^4 \leq \delta^{4 \zeta} C_{T,\varrho_{\alpha}(\mathbf{B})}  (1+|x|_{\mathcal{H}_{\gamma+\zeta} }^4+|y|_{\mathcal H_\gamma }^4 ),
	\end{equation}
	and
	\begin{equation}\label{Auxest}
		\sup_{t\in[0,T]}\mathbb E|\hat{Y}_t^\varepsilon|_{\mathcal H_\gamma }^2\leq C_T(1+| x |_{\mathcal H_\gamma }^2+|y|_{\mathcal H_\gamma }^2),
	\end{equation}
where $C_T$ is a positive constant that depends only on the time $T$.
\end{lemma}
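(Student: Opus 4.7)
The plan is to obtain both bounds from an Itô-type energy estimate on the fast dynamics, exploiting the strict dissipativity condition $(\mathbf{H5})$ to dominate the Lipschitz contributions coming from $F_2$ and $G_2$. Since $\hat{Y}^\varepsilon$ satisfies an honest It\^{o} SPDE (its drift depends on $X_{k\delta}^\varepsilon$ on $[k\delta,(k+1)\delta]$, which is $\mathcal{F}_{k\delta}$-measurable, and the $W$-integral is genuine It\^{o}), Itô's formula applies directly; an analogous calculation for $Y^\varepsilon$ (using Lemma \ref{RSPDE}) lets us work coordinate-wise on the difference $Z_t^\varepsilon := Y_t^\varepsilon-\hat{Y}_t^\varepsilon$.

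For the energy estimate \eqref{Auxest}, I would apply Itô's formula to $|\hat{Y}_t^\varepsilon|_{\mathcal H_\gamma}^2$. The term $\frac{2}{\varepsilon}\langle \hat{Y}_t^\varepsilon,L\hat{Y}_t^\varepsilon\rangle_{\mathcal H_\gamma}$ is bounded by $-\frac{2\lambda_1}{\varepsilon}|\hat{Y}_t^\varepsilon|_{\mathcal H_\gamma}^2$ thanks to $(\mathbf{H4})$; the drift and quadratic-variation contributions from $F_2$ and $G_2$ are split via $(\mathbf{H3})$ and \eqref{F2Lip} into a Lipschitz-in-$\hat{Y}$ part (giving $\frac{2L_{F_2}+L_{G_2}^2}{\varepsilon}|\hat Y|^2$ modulo Young-type corrections) and a part that depends only on $X_{t(\delta)}^\varepsilon$ (bounded by $1+|X_{t(\delta)}^\varepsilon|_{\mathcal H_\gamma}^2$). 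Using $(\mathbf{H5})$ to get a strictly negative coefficient in front of $|\hat Y_t^\varepsilon|_{\mathcal H_\gamma}^2$, followed by the uniform slow-component bound \eqref{X2} and Gronwall's inequality, yields \eqref{Auxest}.

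For the main estimate \eqref{AimAuxdifference}, note that $Z_0^\varepsilon=0$, and
\[
\mathrm{d} Z_t^\varepsilon=\tfrac{1}{\varepsilon}\bigl[LZ_t^\varepsilon+F_2(X_t^\varepsilon,Y_t^\varepsilon)-F_2(X_{t(\delta)}^\varepsilon,\hat Y_t^\varepsilon)\bigr]\mathrm{d} t+\tfrac{1}{\sqrt{\varepsilon}}\bigl[G_2(X_t^\varepsilon,Y_t^\varepsilon)-G_2(X_{t(\delta)}^\varepsilon,\hat Y_t^\varepsilon)\bigr]\mathrm{d} W_t.
\]
Apply Itô's formula to $|Z_t^\varepsilon|_{\mathcal H_\gamma}^4$. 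The $L$-term produces $-\frac{4\lambda_1}{\varepsilon}|Z_t^\varepsilon|_{\mathcal H_\gamma}^4$. I would split each difference of $F_2$ and $G_2$ into an $X$-increment piece (e.g.\ $F_2(X_t^\varepsilon,Y_t^\varepsilon)-F_2(X_{t(\delta)}^\varepsilon,Y_t^\varepsilon)$, which is Lipschitz in $X_t^\varepsilon-X_{t(\delta)}^\varepsilon$) and a $Z$-piece (Lipschitz with constants $L_{F_2},L_{G_2}$). The $Z$-pieces accumulate into $\frac{4L_{F_2}+12L_{G_2}^2}{\varepsilon}|Z_t^\varepsilon|_{\mathcal H_\gamma}^4$ (the factor $12$ coming from the Itô correction $f''$ of $x\mapsto|x|^4$), and together with the $L$-contribution give
\[
-\tfrac{4(\lambda_1-L_{F_2}-3L_{G_2}^2)}{\varepsilon}|Z_t^\varepsilon|_{\mathcal H_\gamma}^4=:-\tfrac{c_0}{\varepsilon}|Z_t^\varepsilon|_{\mathcal H_\gamma}^4,\qquad c_0>0,
\]
precisely by $(\mathbf{H5})$. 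The remaining $X$-increment cross terms, of the shape $\frac{1}{\varepsilon}|Z_t^\varepsilon|_{\mathcal H_\gamma}^3|X_t^\varepsilon-X_{t(\delta)}^\varepsilon|_{\mathcal H_\gamma}$ and $\frac{1}{\varepsilon}|Z_t^\varepsilon|_{\mathcal H_\gamma}^2|X_t^\varepsilon-X_{t(\delta)}^\varepsilon|_{\mathcal H_\gamma}^2$, are absorbed by Young's inequality into a small multiple of $\frac{1}{\varepsilon}|Z_t^\varepsilon|_{\mathcal H_\gamma}^4$ plus $\frac{C}{\varepsilon}|X_t^\varepsilon-X_{t(\delta)}^\varepsilon|_{\mathcal H_\gamma}^4$.

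Taking expectations, choosing the Young constants small enough to retain a negative drift, I obtain a differential inequality
$\frac{\mathrm{d}}{\mathrm{d} t}\mathbb E|Z_t^\varepsilon|_{\mathcal H_\gamma}^4\leq -\frac{c}{\varepsilon}\mathbb E|Z_t^\varepsilon|_{\mathcal H_\gamma}^4+\frac{C}{\varepsilon}\mathbb E|X_t^\varepsilon-X_{t(\delta)}^\varepsilon|_{\mathcal H_\gamma}^4$ with $c>0$. Since $Z_0^\varepsilon=0$, Gronwall yields
\[
\mathbb E|Z_t^\varepsilon|_{\mathcal H_\gamma}^4\leq \int_0^t\tfrac{C}{\varepsilon}e^{-c(t-s)/\varepsilon}\,\mathbb E|X_s^\varepsilon-X_{s(\delta)}^\varepsilon|_{\mathcal H_\gamma}^4\,\mathrm{d} s\leq \tfrac{C}{c}\sup_{s\in[0,T]}\mathbb E|X_s^\varepsilon-X_{s(\delta)}^\varepsilon|_{\mathcal H_\gamma}^4,
\]
and Lemma \ref{Xdifflem} supplies the bound $\delta^{4\zeta}C_{T,\varrho_{\alpha}(\mathbf B)}(1+|x|_{\mathcal H_{\gamma+\zeta}}^4+|y|_{\mathcal H_\gamma}^4)$, finishing \eqref{AimAuxdifference}. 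The main obstacle is bookkeeping: correctly isolating the Lipschitz-in-$Y$ contributions so that the net prefactor of $|Z|^4/\varepsilon$ is exactly $-4(\lambda_1-L_{F_2}-3L_{G_2}^2)/\varepsilon$, so that $(\mathbf{H5})$ rescues the estimate without eating away the whole $\delta^{4\zeta}$ gain.
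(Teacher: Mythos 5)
Your argument is correct and is essentially the route the paper takes: the paper itself omits the proof and defers to \cite[Lemma 4.7]{LLPX25}, which is precisely this It\^{o}-formula energy estimate on $\hat{Y}^\varepsilon$ and on the difference $Z^\varepsilon=Y^\varepsilon-\hat{Y}^\varepsilon$, using the dissipativity guaranteed by $(\mathbf{H5})$, Young's inequality to absorb the $X$-increment cross terms, Gronwall with the $\frac{1}{\varepsilon}e^{-c(t-s)/\varepsilon}$ kernel to neutralize the $1/\varepsilon$, and the time-increment bound of Lemma \ref{Xdifflem}. Two harmless remarks: the It\^{o} correction for $z\mapsto|z|_{\mathcal H_\gamma}^4$ contributes at most $6|Z|_{\mathcal H_\gamma}^2$ times the squared diffusion norm (your factor $12$ is an overestimate, still covered since $(\mathbf{H5})$ is a strict inequality), and to obtain the purely deterministic constant $C_T$ in \eqref{Auxest} one should integrate the random constant coming from \eqref{X2} over $\Omega$ via the moment bounds in $(\mathbf{H6})$.
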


\subsection{Proof of Theorem \ref{MainRes}}\label{proof}

We now prove Theorem \ref{MainRes}, namely the solutions to the slow component in the slow--fast system (\ref{eq1}) converge strongly to the solution to the averaged equation (\ref{aveq}) in the sense of $L^2(\Omega; \mathcal C([0,T]; \mathcal H_\gamma))$ as $\varepsilon \to 0$.
The proof is divided into the following four steps:

\textbf{Step1:} According to the definitions of solutions to Eqs. (\ref{eq1}) and (\ref{aveq}), we have
\begin{align*}
X_t^\varepsilon-\bar{X}_t
=& \int_0^t S_{t-s} \big[ F_1(X_s^\varepsilon,Y_s^\varepsilon)-F_1(X_{s(\delta)}^\varepsilon,\hat{Y}_s^\varepsilon) \big] \mathrm{d} s +\int_0^t S_{t-s} \big[F_1(X_{s(\delta)}^\varepsilon,\hat{Y}_s^\varepsilon)-\bar{F}_1(X_{s(\delta)}^\varepsilon) \big]\mathrm{d} s \nonumber\\
&+\int_0^t S_{t-s} \big[\bar{F}_1(X_{s(\delta)}^\varepsilon)-\bar{F}_1(X_s^\varepsilon) \big]\mathrm{d} s  \nonumber+\int_0^t S_{t-s} \big[\bar{F}_1(X_s^\varepsilon)-\bar{F}_1(\bar{X}_s) \big]\mathrm{d} s  \nonumber\\
&+ \int_0^t S_{t-s} \big[G_1(X_s^\varepsilon)-G_1(\bar{X}_s) \big] \mathrm{d} \mathbf B_s.
\end{align*}
We define
\begin{align*}
\varGamma_t^\varepsilon
:=& \int_0^t S_{t-s} \big[F_1(X_s^\varepsilon,Y_s^\varepsilon)-F_1(X_{s(\delta)}^\varepsilon,\hat{Y}_s^\varepsilon) \big]\mathrm{d} s +\int_0^t S_{t-s} \big[F_1(X_{s(\delta)}^\varepsilon,\hat{Y}_s^\varepsilon)-\bar{F}_1(X_{s(\delta)}^\varepsilon) \big]\mathrm{d} s\nonumber\\
&+\int_0^t S_{t-s} \big[\bar{F}_1(X_{s(\delta)}^\varepsilon)-\bar{F}_1(X_s^\varepsilon) \big]\mathrm{d} s.
\end{align*}
It is easy to deduce that
\begin{align}\label{Xdif}
&\left\|(X^\varepsilon-\bar{X}, G_1(X^\varepsilon)-G_1(\bar{X}))-(\varGamma^\varepsilon,0) \right\|_{\mathcal{D}_{ B,\gamma}^{2\alpha}} \nonumber\\
= &\left\| \int_0^\cdot S_{\cdot-s} \big[\bar{F}_1(X_s^\varepsilon)-\bar{F}_1(\bar{X}_s) \big]\mathrm{d} s+\int_0^\cdot S_{\cdot-s} \big[G_1(X_s^\varepsilon)-G_1(\bar{X}_s) \big] \mathrm{d} \mathbf B_s, G_1(X^\varepsilon)-G_1(\bar{X})  \right\|_{\mathcal{D}_{ B,\gamma}^{2\alpha}} \nonumber\\
\leq & \left\| \int_0^\cdot S_{\cdot-s} \big[\bar{F}_1(X_s^\varepsilon)-\bar{F}_1(\bar{X}_s) \big]\mathrm{d} s,0 \right\|_{\mathcal{D}_{ B,\gamma}^{2\alpha}} \nonumber\\
&+\left\|\int_0^\cdot S_{\cdot-s} \big[G_1(X_s^\varepsilon)-G_1(\bar{X}_s) \big] \mathrm{d} \mathbf B_s, G_1(X^\varepsilon)-G_1(\bar{X})  \right\|_{\mathcal{D}_{ B,\gamma}^{2\alpha}} \nonumber\\
=:& L_1+L_2.
\end{align}
For $L_1$, from (\ref{remainder}) and (\ref{norm}) we have
\begin{align}\label{L1}
L_1
=& \left\| \int_0^\cdot S_{\cdot-s} \big[\bar{F}_1(X_s^\varepsilon)-\bar{F}_1(\bar{X}_s) \big]\mathrm{d} s \right\|_{\infty,\mathcal H_\gamma}
+\left| \int_0^\cdot S_{\cdot-s} \big[\bar{F}_1(X_s^\varepsilon)-\bar{F}_1(\bar{X}_s) \big]\mathrm{d} s \right|_{\alpha,\mathcal H_{\gamma-\alpha}} \nonumber\\
&+\left| \int_0^\cdot S_{\cdot-s} \big[\bar{F}_1(X_s^\varepsilon)-\bar{F}_1(\bar{X}_s) \big]\mathrm{d} s \right|_{2\alpha,\mathcal H_{\gamma-2\alpha}}.
\end{align}
For the first term of the right-hand side of (\ref{L1}), by (\ref{group1}) and (\ref{Fbar}) we have
\begin{align} \label{barFsup}
&\left\| \int_0^\cdot S_{\cdot-s} \big[\bar{F}_1(X_s^\varepsilon)-\bar{F}_1(\bar{X}_s) \big]\mathrm{d} s \right\|_{\infty,\mathcal H_\gamma}
 =\sup_{t \in [0,T]}\left| \int_0^t S_{t-s}  \big[\bar{F}_1(X_s^\varepsilon)-\bar{F}_1(\bar{X}_s) \big]\mathrm{d} s \right|_{\mathcal H_\gamma } \nonumber \\
\lesssim & \sup_{t \in [0,T]} \left(\int_0^t (t-s)^{-\alpha} \big|\bar{F}_1(X_s^\varepsilon)-\bar{F}_1(\bar{X}_s) \big|_{\mathcal H_{\gamma-\alpha} } \mathrm{d} s  \right) \nonumber \\
\lesssim & \sup_{t \in [0,T]} \left( \int_0^t (t-s)^{-\alpha} |X_s^\varepsilon-\bar{X}_s|_{\mathcal H_\gamma } \mathrm{d} s \right)\nonumber \\
\lesssim &  \|X^\varepsilon-\bar{X}\|_{\infty,\mathcal H_\gamma} T^{1-\alpha}.
\end{align}
For the remainder terms $ \left| \int_0^\cdot S_{\cdot-s} [\bar{F}_1(X_s^\varepsilon)-\bar{F}_1(\bar{X}_s) ]  \mathrm{d} s \right|_{\theta,\mathcal H_{\gamma-\theta}}$ with $\theta \in {\{\alpha, 2\alpha\}}$,
by simple calculations we have
\begin{align*}
&\left|\int_0^t S_{t-r} \big[\bar{F}_1(X_r^\varepsilon)-\bar{F}_1(\bar{X}_r) \big] \mathrm{d} r-\int_0^s S_{s-r} \big[\bar{F}_1(X_r^\varepsilon)-\bar{F}_1(\bar{X}_r) \big] \mathrm{d} r \right|_{\mathcal H_{\gamma-\theta}}\\
=&\left| \int_s^t S_{t-r} \big[\bar{F}_1(X_r^\varepsilon)-\bar{F}_1(\bar{X}_r) \big] \mathrm{d} r+\int_0^s (S_{t-s}-\mathrm{id} )S_{s-r} \big[\bar{F}_1(X_r^\varepsilon)-\bar{F}_1(\bar{X}_r) \big] \mathrm{d} r \right|_{\mathcal H_{\gamma-\theta}}\\
\leq & \left|\int_s^t S_{t-r} \big[\bar{F}_1(X_r^\varepsilon)-\bar{F}_1(\bar{X}_r) \big] \mathrm{d} r \right|_{\mathcal H_{\gamma-\theta}}
+\left|(S_{t-s}-\mathrm{id} )\int_0^s S_{s-r} \big[\bar{F}_1(X_r^\varepsilon)-\bar{F}_1(\bar{X}_r) \big] \mathrm{d} r \right|_{\mathcal H_{\gamma-\theta}}.
\end{align*}
For $ \left|\int_s^t S_{t-r} [\bar{F}_1(X_r^\varepsilon)-\bar{F}_1(\bar{X}_r) ] \mathrm{d} r \right|_{\mathcal H_{\gamma-\theta}}$, applying (\ref{group1}) and (\ref{Fbar}) we have
\begin{align}\label{F1the1}
&~~~\left|\int_s^t S_{t-r} \big[\bar{F}_1(X_r^\varepsilon)-\bar{F}_1(\bar{X}_r) \big] \mathrm{d} r \right|_{\mathcal H_{\gamma-\theta}}
\leq \int_s^t \left|S_{t-r} \big[\bar{F}_1(X_r^\varepsilon)-\bar{F}_1(\bar{X}_r) \big] \right|_{\mathcal H_{\gamma-\theta}}\mathrm{d} r \nonumber\\
& \lesssim  \int_s^t (t-r)^{\theta-2\alpha} \left|\bar{F}_1(X_r^\varepsilon)-\bar{F}_1(\bar{X}_r)\right|_{\mathcal H_{\gamma-2\alpha} }\mathrm{d} r
 \lesssim  \int_s^t (t-r)^{\theta-2\alpha} |X_r^\varepsilon-\bar{X}_r|_{\mathcal H_\gamma} \mathrm{d} r \nonumber\\
& \lesssim \|X^\varepsilon-\bar{X}\|_{\infty,\mathcal H_\gamma} (t-s)^{1+\theta-2\alpha}.
\end{align}
According to (\ref{group2}) and (\ref{barFsup}), it follows that
\begin{align} \label{F1the2}
& \left|(S_{t-s}-\mathrm{id} )\int_0^s S_{s-r} \big[\bar{F}_1(X_r^\varepsilon)-\bar{F}_1(\bar{X}_r) \big] \mathrm{d} r \right|_{\mathcal H_{\gamma-\theta}} \nonumber\\
\lesssim & (t-s)^\theta \left|\int_0^s S_{s-r} \big[\bar{F}_1(X_r^\varepsilon)-\bar{F}_1(\bar{X}_r) \big] \mathrm{d} r \right|_{\mathcal H_\gamma } \nonumber\\
\lesssim & (t-s)^\theta \|X^\varepsilon-\bar{X}\|_{\infty,\mathcal H_\gamma} T^{1-\alpha}.
\end{align}
Combining (\ref{F1the1}) and (\ref{F1the2}) we have
\begin{equation} \label{F1Hoder}
\left| \int_0^\cdot S_{\cdot-s} \big[\bar{F}_1(X_s^\varepsilon)-\bar{F}_1(\bar{X}_s) \big]\mathrm{d} s \right|_{\theta,\mathcal H_{\gamma-\theta}}
\lesssim \|X^\varepsilon-\bar{X}\|_{\infty,\mathcal H_\gamma} (T^{1-2\alpha}+T^{1-\alpha}).
\end{equation}
Thus by (\ref{barFsup}) and (\ref{F1Hoder}) we obtain
\begin{equation}\label{Frem}
\left\| \int_0^\cdot S_{\cdot-s} \big[\bar{F}_1(X_s^\varepsilon)-\bar{F}_1(\bar{X}_s) \big]\mathrm{d} s,0 \right\|_{\mathcal{D}_{ B,\gamma}^{2\alpha}}
\lesssim \|X^\varepsilon-\bar{X}\|_{\infty,\mathcal H_\gamma} (T^{1-2\alpha}+T^{1-\alpha}).
\end{equation}

For $L_2$, using Lemmas \ref{stab} and \ref{Glem2} we have
\begin{align}\label{L2}
L_2&=\left\|\int_0^\cdot S_{\cdot-s} \big[G_1(X_s^\varepsilon)-G_1(\bar{X}_s) \big] \mathrm{d} \mathbf B_s, G_1(X^\varepsilon)-G_1(\bar{X})  \right\|_{\mathcal{D}_{ B,\gamma}^{2\alpha}}   \nonumber\\
& \lesssim (1+\varrho_{\alpha}(\mathbf{B}) ) \left( |G_1(X_0^\varepsilon)-G_1(\bar{X}_0)|_{\mathcal H_{\gamma-\sigma}^{d_1} }+|DG_1(X_0^\varepsilon)G_1(X_0^\varepsilon)-DG_1(\bar{X}_0)G_1(\bar{X}_0)|_{\mathcal H_{\gamma-\alpha-\sigma}^{d_1 \times d_1} } \right) \nonumber\\
&~~~ + T^{\alpha-\sigma}(1+\varrho_{\alpha}(\mathbf{B}))  \|G_1(X^\varepsilon)-G_1(\bar{X}),DG_1(X^\varepsilon)G_1(X^\varepsilon)-DG_1(\bar{X})G_1(\bar{X})\|_{\mathcal{D}_{ B,\gamma-\sigma}^{2\alpha}} \nonumber\\
& \lesssim T^{\alpha-\sigma} C_{T,\varrho_{\alpha}(\mathbf{B})} (1+|x|_{\mathcal H_\gamma})^2 \|X^\varepsilon-\bar{X},G_1(X^\varepsilon)-G_1(\bar{X})\|_{\mathcal{D}_{ B,\gamma}^{2\alpha}},
\end{align}
where we used $X_0^\varepsilon=\bar{X}_0=x$ in the last one.
Substituting (\ref{Frem}) and (\ref{L2}) into (\ref{Xdif}) we have
\begin{align*}
&\|(X^\varepsilon-\bar{X}, G_1(X^\varepsilon)-G_1(\bar{X}))-(\varGamma^\varepsilon,0)\|_{\mathcal{D}_{ B,\gamma}^{2\alpha}} \nonumber\\
\leq& C_{T,\varrho_{\alpha}(\mathbf{B})} (1+|x|_{\mathcal H_\gamma})^2 (T^{1-2\alpha} +T^{1-\alpha}+T^{\alpha-\sigma}) \|X^\varepsilon-\bar{X},G_1(X^\varepsilon)-G_1(\bar{X})\|_{\mathcal{D}_{ B,\gamma}^{2\alpha}}.
\end{align*}

Let $0<T<1$ be small enough such that $3 C_{T,\varrho_{\alpha}(\mathbf{B})} (1+|x|_{\mathcal H_\gamma})^2 T^{(1-2\alpha)\wedge (\alpha-\sigma)} <1/2$, then we have
\begin{align*}
&\|X^\varepsilon-\bar{X}, G_1(X^\varepsilon)-G_1(\bar{X})\|_{\mathcal{D}_{B,\gamma}^{2\alpha}([0,T])} \nonumber\\
\leq &\|\varGamma^\varepsilon,0\|_{\mathcal{D}_{B,\gamma}^{2\alpha}}
+ \frac 12 \|X^\varepsilon-\bar{X},G_1(X^\varepsilon)-G_1(\bar{X})\|_{\mathcal{D}_{B,\gamma}^{2\alpha}([0,T])}.
\end{align*}
It is easy to obtain that
\begin{equation}\label{normT}
 \|X^\varepsilon-\bar{X}, G_1(X^\varepsilon)-G_1(\bar{X})\|_{\mathcal{D}_{B,\gamma}^{2\alpha}([0,T])} \leq 2 \|\varGamma^\varepsilon,0\|_{\mathcal{D}_{B,\gamma}^{2\alpha}}.
\end{equation}

For any $T>0$, we divide the time interval $[0,T]$ into $N \in \mathbf N$ identical subintervals such that $ 3 C_{T,\varrho_{\alpha}(\mathbf{B})} (1+|x|_{\mathcal H_\gamma})^2 {(\frac T N)}^{(1-2\alpha)\wedge (\alpha-\sigma)} <1/2$,
then on each subinterval $[\frac kN T,\frac {k+1}N T]$ with $k=0, \cdots , N-1$, similar to the estimation of (\ref{normT}), one has
\begin{align}\label{Xk}
\|X^\varepsilon-\bar{X}, G_1(X^\varepsilon)-G_1(\bar{X})\|_{\mathcal{D}_{B,\gamma}^{2\alpha}([\frac kN T,\frac {k+1}N T])}
\leq & 2 \|\varGamma^\varepsilon,0\|_{\mathcal{D}_{B,\gamma}^{2\alpha}}+C_{\varrho_{\alpha}(\mathbf{B})} \left|X_{\frac kN T}^\varepsilon-\bar{X}_{\frac kN T} \right|_{\mathcal H_\gamma } \nonumber\\
\leq & 2 \|\varGamma^\varepsilon,0\|_{\mathcal{D}_{B,\gamma}^{2\alpha}}+C_{\varrho_{\alpha}(\mathbf{B})} \|X^\varepsilon-\bar{X}\|_{\infty,\mathcal H_\gamma,[\frac {k-1}N T,\frac kN T]}.
\end{align}
For $k=1$, using (\ref{norm}) and (\ref{normT}) we obtain
\begin{align*}
&\|X^\varepsilon-\bar{X}, G_1(X^\varepsilon)-G_1(\bar{X})\|_{\mathcal{D}_{B,\gamma}^{2\alpha}([\frac TN,\frac {2T}N ])}\\
\leq & 2 \|\varGamma^\varepsilon,0\|_{\mathcal{D}_{B,\gamma}^{2\alpha}}+C_{\varrho_{\alpha}(\mathbf{B})}\|X^\varepsilon-\bar{X}, G_1(X^\varepsilon)-G_1(\bar{X})\|_{\mathcal{D}_{B,\gamma}^{2\alpha}([0,\frac TN ])} \\
\leq & 2 \|\varGamma^\varepsilon,0\|_{\mathcal{D}_{B,\gamma}^{2\alpha}}+2C_{\varrho_{\alpha}(\mathbf{B})}\|\varGamma^\varepsilon,0\|_{\mathcal{D}_{B,\gamma}^{2\alpha}}
\leq 2^2 C_{\varrho_{\alpha}(\mathbf{B})}\|\varGamma^\varepsilon,0\|_{\mathcal{D}_{B,\gamma}^{2\alpha}}.
\end{align*}
Similarly, for $k=2$, we have
\begin{align*}
&\|X^\varepsilon-\bar{X}, G_1(X^\varepsilon)-G_1(\bar{X})\|_{\mathcal{D}_{B,\gamma}^{2\alpha}([\frac {2T}N,\frac {3T}N ])}\\
\leq & 2 \|\varGamma^\varepsilon,0\|_{\mathcal{D}_{B,\gamma}^{2\alpha}}+C_{\varrho_{\alpha}(\mathbf{B})}\|X^\varepsilon-\bar{X}, G_1(X^\varepsilon)-G_1(\bar{X})\|_{\mathcal{D}_{B,\gamma}^{2\alpha}([\frac TN,\frac {2T}N ])} \\
\leq & 2 \|\varGamma^\varepsilon,0\|_{\mathcal{D}_{B,\gamma}^{2\alpha}}+2^2(C_{\varrho_{\alpha}(\mathbf{B})})^2\|\varGamma^\varepsilon,0\|_{\mathcal{D}_{B,\gamma}^{2\alpha}}
\leq 2^3 (C_{\varrho_{\alpha}(\mathbf{B})})^2\|\varGamma^\varepsilon,0\|_{\mathcal{D}_{B,\gamma}^{2\alpha}}.
\end{align*}
Therefore, for every subinterval $[\frac kN T,\frac {k+1}N T]$ with $k=0, \cdots , N-1$, by recursively using (\ref{Xk}) yields
\begin{align}\label{subk}
&\|X^\varepsilon-\bar{X}\|_{\infty,\mathcal H_\gamma,[\frac kN T,\frac {k+1}N T]} \nonumber\\
\leq &\|X^\varepsilon-\bar{X}, G_1(X^\varepsilon)-G_1(\bar{X})\|_{\mathcal{D}_{B,\gamma}^{2\alpha}([\frac kN T,\frac {k+1}N T])}
\leq  2^{k+1} (C_{\varrho_{\alpha}(\mathbf{B})})^k \|\varGamma^\varepsilon,0\|_{\mathcal{D}_{B,\gamma}^{2\alpha}}.
\end{align}

Consequently, for any $T>0$, we get
\begin{equation*}
\|X^\varepsilon-\bar{X}\|_{\infty,\mathcal H_\gamma,[0,T]}=\max_{k\in \{0, \cdots,N-1\}} \|X^\varepsilon-\bar{X}\|_{\infty,\mathcal H_\gamma,[\frac kN T,\frac {k+1}N T]} \leq  2^{N} (C_{\varrho_{\alpha}(\mathbf{B})})^{N-1} \|\varGamma^\varepsilon,0\|_{\mathcal{D}_{B,\gamma}^{2\alpha}}.
\end{equation*}
Recall that
\begin{align*}
	\varGamma_t^\varepsilon&= \int_0^t S_{t-s} \big[F_1(X_s^\varepsilon,Y_s^\varepsilon)
	-F_1(X_{s(\delta)}^\varepsilon,\hat{Y}_s^\varepsilon) \big]\mathrm{d} s+\int_0^t S_{t-s} \big[F_1(X_{s(\delta)}^\varepsilon,\hat{Y}_s^\varepsilon)-\bar{F}_1(X_{s(\delta)}^\varepsilon) \big]\mathrm{d} s\nonumber\\
	&~~~+\int_0^t S_{t-s} \big[\bar{F}_1(X_{s(\delta)}^\varepsilon)
	-\bar{F}_1(X_s^\varepsilon) \big]\mathrm{d} s.
\end{align*}
Based on the above calculations we obtain
\begin{align}\label{EXdif}
\mathbb E \|X^\varepsilon-\bar{X}\|_{\infty,\mathcal H_\gamma,[0,T]}^2	
& \leq  C_{N,\varrho_{\alpha}(\mathbf{B})} \mathbb E \left\| \displaystyle\int_0^\cdot S_{\cdot-s} \big[F_1(X_s^\varepsilon,Y_s^\varepsilon)
-F_1(X_{s(\delta)}^\varepsilon,\hat{Y}_s^\varepsilon) \big]\mathrm{d} s,0 \right\|_{\mathcal{D}_{B,\gamma}^{2\alpha}}^2 \nonumber\\
&~~~+ C_{N,\varrho_{\alpha}(\mathbf{B})} \mathbb E \left\| \displaystyle\int_0^\cdot S_{\cdot-s} \big[\bar{F}_1(X_{s(\delta)}^\varepsilon)
-\bar{F}_1(X_s^\varepsilon) \big]\mathrm{d} s,0 \right\|_{\mathcal{D}_{B,\gamma}^{2\alpha}}^2 \nonumber\\
&~~~+C_{N,\varrho_{\alpha}(\mathbf{B})} \mathbb E \left\| \displaystyle\int_0^\cdot S_{\cdot-s} \big[F_1(X_{s(\delta)}^\varepsilon,\hat{Y}_s^\varepsilon)-\bar{F}_1(X_{s(\delta)}^\varepsilon) \big]\mathrm{d} s,0 \right\|_{\mathcal{D}_{B,\gamma}^{2\alpha}}^2.
\end{align}

\textbf{Step2:} We estimate $\mathbb E \left\| \int_0^\cdot S_{\cdot-s} [F_1(X_s^\varepsilon,Y_s^\varepsilon)
-F_1(X_{s(\delta)}^\varepsilon,\hat{Y}_s^\varepsilon) ]\mathrm{d} s,0 \right\|_{\mathcal{D}_{B,\gamma}^{2\alpha}}^2$.
From (\ref{norm}), it is easy to see that
\begin{align}\label{F1dif}
&~~~ \mathbb E \left\| \displaystyle\int_0^\cdot S_{\cdot-s} \big[F_1(X_s^\varepsilon,Y_s^\varepsilon)
-F_1(X_{s(\delta)}^\varepsilon,\hat{Y}_s^\varepsilon) \big]\mathrm{d} s,0 \right\|_{\mathcal{D}_{B,\gamma}^{2\alpha}}^2 \nonumber\\	
&\leq  3 \mathbb E \left\| \displaystyle\int_0^\cdot S_{\cdot-s} \big[F_1(X_s^\varepsilon,Y_s^\varepsilon)
-F_1(X_{s(\delta)}^\varepsilon,\hat{Y}_s^\varepsilon) \big]\mathrm{d} s \right\|_{\infty,\mathcal H_\gamma}^2 \nonumber\\	
&~~~+ 3 \mathbb E \left| \displaystyle\int_0^\cdot S_{\cdot-s} \big[F_1(X_s^\varepsilon,Y_s^\varepsilon)
-F_1(X_{s(\delta)}^\varepsilon,\hat{Y}_s^\varepsilon) \big]\mathrm{d} s \right|_{\alpha,\mathcal H_{\gamma-\alpha}}^2 \nonumber\\
&~~~+3  \mathbb E \left| \displaystyle\int_0^\cdot S_{\cdot-s} \big[F_1(X_s^\varepsilon,Y_s^\varepsilon)
-F_1(X_{s(\delta)}^\varepsilon,\hat{Y}_s^\varepsilon) \big]\mathrm{d} s \right|_{2\alpha,\mathcal H_{\gamma-2\alpha}}^2 \nonumber\\
& =:\hat{I}_1+\hat{I}_2+\hat{I}_3.	
\end{align}
For $\hat{I}_1$, using (\ref{group1}), (\ref{F1Lip}) and H\"{o}lder's inequality we have
\begin{align}\label{K_1eta}
\hat{I}_1 & = 3\mathbb E \left [ \sup_{t \in [0,T]}
	\left| \int_0^t S_{t-s} \big[F_1(X_s^\varepsilon,Y_s^\varepsilon)-F_1(X_{s(\delta)}^\varepsilon,\hat{Y}_s^\varepsilon) \big]\mathrm{d} s \right|_{\mathcal H_\gamma }^2\right ] \nonumber\\
	& \lesssim  \mathbb E \left [ \sup_{t\in [0,T]} \left( \int_0^t |t-s|^{-\alpha}   \big| F_1(X_s^\varepsilon,Y_s^\varepsilon)-F_1(X_{s(\delta)}^\varepsilon,\hat{Y}_s^\varepsilon) \big|_{\mathcal H_{\gamma-\alpha} } \mathrm{d} s \right)^2\right ] \nonumber\\
	&\lesssim T^{1-2\alpha} \mathbb E\int_0^T \big| F_1(X_s^\varepsilon,Y_s^\varepsilon)-F_1(X_{s(\delta)}^\varepsilon,\hat{Y}_s^\varepsilon) \big|_{\mathcal H_{\gamma-\alpha} }^2 \mathrm{d} s \nonumber\\
	&\lesssim  T^{1-2\alpha}  \int_0^T  \mathbb E \big[| X_s^\varepsilon
	-X_{s(\delta)}^\varepsilon |_{\mathcal H_\gamma }^2+|Y_s^\varepsilon-\hat{Y}_s^\varepsilon|_{\mathcal H_\gamma }^2 \big] \mathrm{d} s \nonumber\\
	&\leq  C_{T,\varrho_{\alpha}(\mathbf{B})} (1+|x|_{\mathcal{H}_{\gamma+\zeta}}^2+|y|_{\mathcal{H}_\gamma}^2)\delta^{2\zeta},
\end{align}
where we used (\ref{AimAuxdifference}) and Lemma \ref{Xdifflem} in the last step.
For $\hat{I}_2$, it is easy to obtain that
\begin{align*}
\hat{I}_2 & \leq C   \mathbb E \left [ \sup_{s<t} |t-s|^{-2\alpha} \left| \int_s^t S_{t-r}\big[ F_1(X_r^\varepsilon,Y_r^\varepsilon)-F_1(X_{r(\delta)}^\varepsilon,\hat{Y}_r^\varepsilon) \big]\mathrm{d} r \right|_{\mathcal H_{\gamma-\alpha} }^2 \right ] \nonumber\\
&~~~+C \mathbb E \left [ \sup_{s<t} |t-s|^{-2\alpha} \left| (S_{t-s}-\mathrm{\mathrm{id} })\int_0^s S_{s-r}\big[F_1(X_r^\varepsilon,Y_r^\varepsilon)-F_1(X_{r(\delta)}^\varepsilon,\hat{Y}_r^\varepsilon) \big] \mathrm{d} r \right|_{\mathcal H_{\gamma-\alpha} }^2 \right ] \nonumber\\
&=: \hat{I}_{21}+\hat{I}_{22}.	
\end{align*}
Using H\"{o}lder's inequality, (\ref{F1Lip}), (\ref{AimAuxdifference}) and Lemma \ref{Xdifflem}, $\hat{I}_{21}$ is controlled by
\begin{align}\label{hatI21}
\hat{I}_{21} & \leq C \mathbb E \left [ \sup_{s<t} |t-s|^{1-2\alpha}  \int_s^t \big| S_{t-r} \big[ F_1(X_r^\varepsilon,Y_r^\varepsilon)-F_1(X_{r(\delta)}^\varepsilon,\hat{Y}_r^\varepsilon) \big] \big|_{\mathcal H_{\gamma-\alpha} }^2  \mathrm{d} r \right ] \nonumber\\
&\lesssim T^{1-2\alpha} \mathbb E \int_0^T \big| F_1(X_r^\varepsilon,Y_r^\varepsilon)-F_1(X_{r(\delta)}^\varepsilon,\hat{Y}_r^\varepsilon)  \big|_{\mathcal H_{\gamma-\alpha} }^2  \mathrm{d} r \nonumber\\
&\lesssim  T^{1-2\alpha}  \int_0^T  \mathbb E \big[| X_r^\varepsilon
-X_{r(\delta)}^\varepsilon |_{\mathcal H_\gamma }^2+|Y_r^\varepsilon-\hat{Y}_r^\varepsilon|_{\mathcal H_\gamma }^2 \big] \mathrm{d} r \nonumber\\
& \leq  C_{T,\varrho_{\alpha}(\mathbf{B})} (1+|x|_{\mathcal{H}_{\gamma+\zeta}}^2+|y|_{\mathcal{H}_\gamma}^2)\delta^{2\zeta}.
\end{align}
According to (\ref{group2}) and (\ref{K_1eta}) we get
\begin{align}\label{hatI22}
\hat{I}_{22} & \leq C \mathbb E \left [ \sup_{s<t} |t-s|^{-2\alpha} \left( |t-s|^\alpha \left| \int_0^s S_{s-r}\big[F_1(X_r^\varepsilon,Y_r^\varepsilon)-F_1(X_{r(\delta)}^\varepsilon,\hat{Y}_r^\varepsilon) \big] \mathrm{d} r \right|_{\mathcal H_\gamma }  \right)^2\right ] \nonumber\\	
& \leq C \mathbb E \left [ \sup_{s \in [0,T]}  \left| \int_0^s S_{s-r}\big[F_1(X_r^\varepsilon,Y_r^\varepsilon)-F_1(X_{r(\delta)}^\varepsilon,\hat{Y}_r^\varepsilon) \big] \mathrm{d} r \right|_{\mathcal H_{\gamma} }^2\right ] \nonumber\\
& \leq C_{T,\varrho_{\alpha}(\mathbf{B})} (1+|x|_{\mathcal{H}_{\gamma+\zeta}}^2+|y|_{\mathcal{H}_\gamma}^2)\delta^{2\zeta}.
\end{align}

For $\hat{I}_3$ we have
\begin{align*}
\hat{I}_3 & \leq C   \mathbb E \left [ \sup_{s<t} |t-s|^{-4\alpha} \left| \int_s^t S_{t-r}\big[ F_1(X_r^\varepsilon,Y_r^\varepsilon)-F_1(X_{r(\delta)}^\varepsilon,\hat{Y}_r^\varepsilon) \big]\mathrm{d} r \right|_{\mathcal H_{\gamma-2\alpha} }^2 \right ] \nonumber\\
	&~~~+C \mathbb E \left [ \sup_{s<t} |t-s|^{-4\alpha} \left| (S_{t-s}-\mathrm{\mathrm{id} })\int_0^s S_{s-r}\big[F_1(X_r^\varepsilon,Y_r^\varepsilon)-F_1(X_{r(\delta)}^\varepsilon,\hat{Y}_r^\varepsilon) \big] \mathrm{d} r \right|_{\mathcal H_{\gamma-2\alpha} }^2 \right ] \nonumber\\
	&=: \hat{I}_{31}+\hat{I}_{32}.	
\end{align*}
Similar to the estimation in (\ref{hatI22}), applying (\ref{group2}) and (\ref{K_1eta}) we obtain
\begin{align}\label{hatI32}
\hat{I}_{32} &\leq C \mathbb E \left [ \sup_{s<t} |t-s|^{-4\alpha} \left( |t-s|^{2\alpha} \left| \int_0^s S_{s-r}\big[F_1(X_r^\varepsilon,Y_r^\varepsilon)-F_1(X_{r(\delta)}^\varepsilon,\hat{Y}_r^\varepsilon) \big] \mathrm{d} r \right|_{\mathcal H_\gamma } \right)^2 \right ] \nonumber\\
&\leq C \mathbb E \left [ \sup_{s \in [0,T]}  \left| \int_0^s S_{s-r}\big[F_1(X_r^\varepsilon,Y_r^\varepsilon)-F_1(X_{r(\delta)}^\varepsilon,\hat{Y}_r^\varepsilon) \big] \mathrm{d} r \right|_{\mathcal H_{\gamma} }^2\right ] \nonumber\\
& \leq C_{T,\varrho_{\alpha}(\mathbf{B})} (1+|x|_{\mathcal{H}_{\gamma+\zeta}}^2+|y|_{\mathcal{H}_\gamma}^2)\delta^{2\zeta}.
\end{align}
For $\hat{I}_{31}$, it is straightforward that
\begin{align}\label{hatI31}
\hat{I}_{31}
	&\leq C   \mathbb E \left [ \sup_{s<t} |t-s|^{-4\alpha} \left| \int_s^t S_{t-r}\big[ F_1(X_r^\varepsilon,Y_r^\varepsilon)-F_1(X_{r(\delta)}^\varepsilon,\hat{Y}_r^\varepsilon) \big]\mathrm{d} r \right|_{\mathcal H_{\gamma-2\alpha} }^2  1_{ \{|t-s|\leq \delta\}}\right ] \nonumber\\
	&~~~+C   \mathbb E \left [ \sup_{s<t} |t-s|^{-4\alpha} \left| \int_s^t S_{t-r}\big[ F_1(X_r^\varepsilon,Y_r^\varepsilon)-F_1(X_{r(\delta)}^\varepsilon,\hat{Y}_r^\varepsilon) \big]\mathrm{d} r \right|_{\mathcal H_{\gamma-2\alpha} }^2  1_{ \{|t-s| >\delta\}}\right ] .
\end{align}
For the first term on the right-hand side of (\ref{hatI31}), by $\mathcal H_{\gamma-\alpha} \hookrightarrow \mathcal H_{\gamma-2\alpha}$ and (\ref{F1Bound}) we obtain
\begin{align}\label{tilK1}
	& C   \mathbb E \left [ \sup_{s<t} |t-s|^{-4\alpha} \left| \int_s^t S_{t-r}\big[ F_1(X_r^\varepsilon,Y_r^\varepsilon)-F_1(X_{r(\delta)}^\varepsilon,\hat{Y}_r^\varepsilon) \big]\mathrm{d} r \right|_{\mathcal H_{\gamma-2\alpha} }^2  1_{ \{|t-s|\leq \delta\}}\right ] \nonumber\\
  \leq  &C \mathbb E  \left [ \sup_{s<t} |t-s|^{-4\alpha} \left( \int_s^t \big| F_1(X_r^\varepsilon,Y_r^\varepsilon)-F_1(X_{r(\delta)}^\varepsilon,\hat{Y}_r^\varepsilon) \big|_{\mathcal H_{\gamma-2\alpha} } \mathrm{d} r \right)^2 1_{ \{|t-s|\leq \delta\}}\right ] \nonumber\\
 \leq & C \mathbb E \left [ \sup_{s<t} |t-s|^{-4\alpha} \left( \int_s^t \|F_1\|_{\infty, \mathcal H_{\gamma-\alpha}}  \mathrm{d} r \right)^2 1_{ \{|t-s|\leq \delta\}}\right ]
\leq C \delta^{2(1-2\alpha)}.
\end{align}
For the second term on the right-hand side of (\ref{hatI31}), using H\"{o}lder's inequality, (\ref{F1Lip}), (\ref{AimAuxdifference}) and Lemma \ref{Xdifflem} we get
\begin{align}\label{K12}
& C   \mathbb E \left [ \sup_{s<t} |t-s|^{-4\alpha} \left| \int_s^t S_{t-r}\big[ F_1(X_r^\varepsilon,Y_r^\varepsilon)-F_1(X_{r(\delta)}^\varepsilon,\hat{Y}_r^\varepsilon) \big]\mathrm{d} r \right|_{\mathcal H_{\gamma-2\alpha} }^2  1_{ \{|t-s| >\delta\}}\right ] \nonumber\\
\leq & \mathbb E \left [ \sup_{s<t} |t-s|^{\frac 32-4\alpha}  \left(\int_s^t \big|S_{t-r}\big[F_1(X_r^\varepsilon,Y_r^\varepsilon)-F_1(X_{r(\delta)}^\varepsilon,\hat{Y}_r^\varepsilon) \big] \big|_{\mathcal H_{\gamma-\alpha} }^4 \mathrm{d} r \right)^{\frac 12}  1_{ \{|t-s|> \delta\} } \right] \nonumber\\
	 \lesssim &  T^{3(\frac 12-\alpha)} \delta^{-\alpha} \mathbb E \left[ \sup_{s<t} \left(\int_s^t \big|F_1(X_r^\varepsilon,Y_r^\varepsilon)-F_1(X_{r(\delta)}^\varepsilon,\hat{Y}_r^\varepsilon)  \big|_{\mathcal H_{\gamma-\alpha} }^4 \mathrm{d} r \right)^{\frac 12}  1_{ \{|t-s|> \delta\} }  \right] \nonumber\\
	\leq  &C_T \delta^{-\alpha} \left\{ \mathbb E \int_0^T \big|F_1(X_r^\varepsilon,Y_r^\varepsilon)-F_1(X_{r(\delta)}^\varepsilon,\hat{Y}_r^\varepsilon)  \big|_{\mathcal H_{\gamma-\alpha} }^4 \mathrm{d} r \right \}^{\frac 12} \nonumber\\
	 \leq  & C_{T,\varrho_{\alpha}(\mathbf{B})} (1+|x|_{\mathcal{H}_{\gamma+\zeta}}^2+|y|_{\mathcal{H}_\gamma}^2)\delta^{2\zeta-\alpha},
\end{align}
where $\zeta \in (\frac \alpha2, \alpha-\sigma)$. Combining estimates (\ref{K_1eta})--(\ref{K12}) we have
\begin{align}\label{dif1}
 &\mathbb E \left\| \displaystyle\int_0^\cdot S_{\cdot-s} \big[F_1(X_s^\varepsilon,Y_s^\varepsilon)
-F_1(X_{s(\delta)}^\varepsilon,\hat{Y}_s^\varepsilon) \big]\mathrm{d} s,0 \right\|_{\mathcal{D}_{B,\gamma}^{2\alpha}}^2  \nonumber\\
\leq & C_{T,\varrho_{\alpha}(\mathbf{B})} (1+|x|_{\mathcal{H}_{\gamma+\zeta}}^2+|y|_{\mathcal{H}_\gamma}^2) (\delta^{2\zeta}+\delta^{2\zeta-\alpha}+\delta^{2(1-2\alpha)}).
\end{align}

For the second term on the right-hand side of (\ref{EXdif}), following similar arguments as in the proof of (\ref{dif1}) gives that
\begin{align}\label{dif2}
&\mathbb E \left\| \displaystyle\int_0^\cdot S_{\cdot-s} \big[\bar{F}_1(X_{s(\delta)}^\varepsilon)
-\bar{F}_1(X_s^\varepsilon) \big]\mathrm{d} s,0 \right\|_{\mathcal{D}_{B,\gamma}^{2\alpha}}^2 \nonumber\\
\leq & C_{T,\varrho_{\alpha}(\mathbf{B})} (1+|x|_{\mathcal{H}_{\gamma+\zeta}}^2+|y|_{\mathcal{H}_\gamma}^2) (\delta^{2\zeta}+\delta^{2\zeta-\alpha}+\delta^{2(1-2\alpha)}).
\end{align}

\textbf{Step3:} Next, let's estimate $ \mathbb E \left\|\int_0^\cdot S_{\cdot-s} \big[F_1(X_{s(\delta)}^\varepsilon,\hat{Y}_s^\varepsilon)-\bar{F}_1(X_{s(\delta)}^\varepsilon) \big]\mathrm{d} s,0 \right\|_{\mathcal{D}_{B,\gamma}^{2\alpha}}^2$.
It is straightforward that
\begin{align*}
	&~~~ \mathbb E \left\| \displaystyle\int_0^\cdot S_{\cdot-s} \big[F_1(X_{s(\delta)}^\varepsilon,\hat{Y}_s^\varepsilon)-\bar{F}_1(X_{s(\delta)}^\varepsilon) \big]\mathrm{d} s,0 \right\|_{\mathcal{D}_{B,\gamma}^{2\alpha}}^2 \nonumber\\	
	&\leq  3 \mathbb E \left\| \displaystyle\int_0^\cdot S_{\cdot-s} \big[F_1(X_{s(\delta)}^\varepsilon,\hat{Y}_s^\varepsilon)-\bar{F}_1(X_{s(\delta)}^\varepsilon) \big]\mathrm{d} s \right\|_{\infty,\mathcal H_\gamma}^2 \nonumber\\	
	&~~~+ 3 \mathbb E \left| \displaystyle\int_0^\cdot S_{\cdot-s} \big[F_1(X_{s(\delta)}^\varepsilon,\hat{Y}_s^\varepsilon)-\bar{F}_1(X_{s(\delta)}^\varepsilon) \big]\mathrm{d} s \right|_{\alpha,\mathcal H_{\gamma-\alpha}}^2 \nonumber\\
	&~~~+3  \mathbb E \left| \displaystyle\int_0^\cdot S_{\cdot-s} \big[F_1(X_{s(\delta)}^\varepsilon,\hat{Y}_s^\varepsilon)-\bar{F}_1(X_{s(\delta)}^\varepsilon) \big]\mathrm{d} s \right|_{2\alpha,\mathcal H_{\gamma-2\alpha}}^2 \nonumber\\
	& =:\hat{J}_1+\hat{J}_2+\hat{J}_3.	
\end{align*}
For $\hat{J}_1$, we have
\begin{align}\label{F3sup}
\hat{J}_1 &=3 \mathbb E \left[ \sup_{t \in [0,T]} \left| \int_0^t S_{t-s} \big[ F_1(X_{s(\delta)}^\varepsilon,\hat{Y}_s^\varepsilon)-\bar{F}_1(X_{s(\delta)}^\varepsilon) \big]  \mathrm{d} s \right|_{\mathcal H_\gamma }^2 \right]	 \nonumber\\
& \leq C \mathbb E \left[ \sup_{t \in [0,T]} \left| \int_0^t (S_{t-s}-S_{t-s(\delta)}) \big[ F_1(X_{s(\delta)}^\varepsilon,\hat{Y}_s^\varepsilon)-\bar{F}_1(X_{s(\delta)}^\varepsilon) \big] \mathrm{d} s \right|_{\mathcal H_\gamma }^2 \right]	 \nonumber\\
&~~~+ C\mathbb E \left[ \sup_{t \in [0,T]} \left| \int_0^t S_{t-s(\delta)} \big[ F_1(X_{s(\delta)}^\varepsilon,\hat{Y}_s^\varepsilon)-\bar{F}_1(X_{s(\delta)}^\varepsilon) \big]  \mathrm{d} s  \right|_{\mathcal H_\gamma }^2 \right].
\end{align}
For the first term on the right-hand side of (\ref{F3sup}), using (\ref{group1}) and (\ref{group2}) we have
\begin{align}\label{Fsup1}
&~~~\mathbb E \left[ \sup_{t \in [0,T]} \left| \int_0^t (S_{t-s}-S_{t-s(\delta)}) \big[ F_1(X_{s(\delta)}^\varepsilon,\hat{Y}_s^\varepsilon)-\bar{F}_1(X_{s(\delta)}^\varepsilon) \big] \mathrm{d} s \right|_{\mathcal H_\gamma }^2 \right]	 \nonumber\\
& \lesssim  \mathbb E \left[ \sup_{t \in [0,T]} \left( \int_0^t |t-s|^{-2\alpha}  \big| (S_{s-s(\delta)}-\mathrm{id}) \big[ F_1(X_{s(\delta)}^\varepsilon,\hat{Y}_s^\varepsilon)-\bar{F}_1(X_{s(\delta)}^\varepsilon) \big] \big|_{\mathcal H_{\gamma-2\alpha} } \mathrm{d} s \right)^2 \right]	 \nonumber\\	
& \lesssim \delta^{2\alpha} \mathbb E \left[ \sup_{t \in [0,T]} \left( \int_0^t |t-s|^{-2\alpha}  \big|  F_1(X_{s(\delta)}^\varepsilon,\hat{Y}_s^\varepsilon)-\bar{F}_1(X_{s(\delta)}^\varepsilon) \big|_{\mathcal H_{\gamma-\alpha} } \mathrm{d} s \right)^2 \right] \nonumber\\
& \lesssim \delta^{2\alpha} \mathbb E \left[\left( \|F_1\|_{\infty, \mathcal H_{\gamma-\alpha}}^2+ \|\bar{F}_1\|_{\infty, \mathcal H_{\gamma-\alpha}}^2   \right) \cdot \sup_{t \in [0,T]} \left( \int_0^t |t-s|^{-2\alpha} \mathrm{d} s \right)^2 \right]
\leq C_T \delta^{2\alpha},
\end{align}
where we used (\ref{F1Bound}) and $\|\bar{F}_1\|_{\infty,\mathcal H_{\gamma-\alpha}} \leq \|F_1\|_{\infty,\mathcal H_{\gamma-\alpha}}$ in the last one.
For the second term on the right-hand side of (\ref{F3sup}), it is easy to see that
\begin{align*}
&~~~\mathbb E \left[ \sup_{t \in [0,T]} \left| \int_0^t S_{t-s(\delta)} \big[ F_1(X_{s(\delta)}^\varepsilon,\hat{Y}_s^\varepsilon)-\bar{F}_1(X_{s(\delta)}^\varepsilon) \big]  \mathrm{d} s  \right|_{\mathcal H_\gamma }^2 \right] \nonumber\\
& \leq 2 \mathbb E \left[ \sup_{t \in [0,T]} \left| \int_{t(\delta)}^t S_{t-s(\delta)} \big[ F_1(X_{s(\delta)}^\varepsilon,\hat{Y}_s^\varepsilon)-\bar{F}_1(X_{s(\delta)}^\varepsilon) \big]  \mathrm{d} s  \right|_{\mathcal H_\gamma }^2 \right]	\nonumber\\
&~~~+2 \mathbb E \left[ \sup_{t \in [0,T]} \left| \sum_{k=0}^{[t/\delta]-1} \int_{k\delta}^{(k+1)\delta} S_{t-k\delta} \big[ F_1(X_{k\delta}^\varepsilon,\hat{Y}_s^\varepsilon)-\bar{F}_1(X_{k\delta}^\varepsilon) \big]  \mathrm{d} s  \right|_{\mathcal H_\gamma }^2 \right]  \nonumber\\
&=:\hat{J}_{11}+\hat{J}_{12}.
\end{align*}
Using (\ref{group1}) and H\"{o}lder's inequality we get
\begin{align}\label{hatJ11}
\hat{J}_{11}& \lesssim 	\mathbb E \left[ \sup_{t \in [0,T]} \left( \int_{t(\delta)}^t  |t-s(\delta)|^{-\alpha} \big|F_1(X_{s(\delta)}^\varepsilon,\hat{Y}_s^\varepsilon)-\bar{F}_1(X_{s(\delta)}^\varepsilon) \big|_{\mathcal H_{\gamma-\alpha} } \mathrm{d} s  \right)^2 \right]	\nonumber\\
& \lesssim \mathbb E \left[ \sup_{t \in [0,T]} \left( \int_{t(\delta)}^t |t-s(\delta)|^{-2\alpha} \mathrm{d} s  \int_{t(\delta)}^t  \big| F_1(X_{s(\delta)}^\varepsilon,\hat{Y}_s^\varepsilon)-\bar{F}_1(X_{s(\delta)}^\varepsilon) \big|_{\mathcal H_{\gamma-\alpha} }^2 \mathrm{d} s \right)  \right]	\nonumber\\
& \lesssim \mathbb E \left[ \sup_{t \in [0,T]} \left( \int_{t(\delta)}^t |t-s|^{-2\alpha} \mathrm{d} s  \int_{t(\delta)}^t  \big| F_1(X_{s(\delta)}^\varepsilon,\hat{Y}_s^\varepsilon)-\bar{F}_1(X_{s(\delta)}^\varepsilon) \big|_{\mathcal H_{\gamma-\alpha} }^2 \mathrm{d} s \right)  \right]	\nonumber\\
& \lesssim \delta^{2(1-\alpha)} \left( \|F_1\|_{\infty, \mathcal H_{\gamma-\alpha}}^2+ \|\bar{F}_1\|_{\infty, \mathcal H_{\gamma-\alpha}}^2 \right)
\lesssim \delta^{2(1-\alpha)}.
\end{align}
For $\hat{J}_{12}$, by (\ref{group1}) we obtain
\begin{align}\label{hatJ12}
\hat{J}_{12}& \leq  \frac T\delta	\mathbb E \left[ \sum_{k=0}^{[T/\delta]-1}  \left|S_{t-k\delta}  \int_{k\delta}^{(k+1)\delta}  F_1(X_{k\delta}^\varepsilon,\hat{Y}_s^\varepsilon)-\bar{F}_1(X_{k\delta}^\varepsilon)   \mathrm{d} s  \right|_{\mathcal H_{\gamma} }^2 \right] \nonumber\\
& \leq  \frac {C_T} {\delta^{1+2\alpha}} \sum_{k=0}^{[T/\delta]-1} \mathbb E  \left| \int_{k\delta}^{(k+1)\delta}  F_1(X_{k\delta}^\varepsilon,\hat{Y}_s^\varepsilon)-\bar{F}_1(X_{k\delta}^\varepsilon)   \mathrm{d} s   \right|_{\mathcal H_{\gamma-\alpha} }^2  \nonumber\\
& \leq  \frac {C_T} {\delta^{2(1+\alpha)}}  \max_{0\leq k\leq[T/\delta]-1} \mathbb E  \left| \int_{k\delta}^{(k+1)\delta}  F_1(X_{k\delta}^\varepsilon,\hat{Y}_s^\varepsilon)-\bar{F}_1(X_{k\delta}^\varepsilon)   \mathrm{d} s   \right|_{\mathcal H_{\gamma-\alpha} }^2  \nonumber\\
&= \frac {C_T \varepsilon^2} {\delta^{2(1+\alpha)}}  \max_{0\leq k\leq[T/\delta]-1} \mathbb E  \left| \int_0^{\frac \delta \varepsilon}  F_1(X_{k\delta}^\varepsilon,\hat{Y}_{s\varepsilon+k\delta}^\varepsilon)-\bar{F}_1(X_{k\delta}^\varepsilon)   \mathrm{d} s   \right|_{\mathcal H_{\gamma-\alpha} }^2  \nonumber\\
& \leq \frac {C_T \varepsilon^2} {\delta^{2(1+\alpha)}}  \max_{0\leq k\leq[T/\delta]-1} \int_0^{\frac \delta \varepsilon} \int_r^{\frac \delta \varepsilon} \Phi_k(s,r) \mathrm{d} s \mathrm{d} r,
\end{align}
where for any $0 \leq r \leq s \leq \frac \delta \varepsilon$,
\begin{equation}\label{phi}
	\Phi_k(s,r):=\mathbb E \left[ \langle F_1(X_{k\delta}^\varepsilon,\hat{Y}_{s\varepsilon+k\delta}^\varepsilon)-\bar{F}_1(X_{k\delta}^\varepsilon),F_1(X_{k\delta}^\varepsilon,\hat{Y}_{r\varepsilon+k\delta}^\varepsilon)-\bar{F}_1(X_{k\delta}^\varepsilon) \rangle_{\mathcal H_{\gamma-\alpha}} \right].
\end{equation}
According to Lemma \ref{phiest} one has
\begin{equation}\label{phires}
\Phi_k(s,r) \leq C (1+|x|_{\mathcal H_\gamma}^2+| y |_{\mathcal H_\gamma }^2) e^{-(\lambda_1-L_{F_2}-L_{G_2}^2)(s-r)}.
\end{equation}
Substituting (\ref{phires}) into (\ref{hatJ12}) we get
\begin{equation}\label{J12est}
\hat{J}_{12} \leq C_T (1+|x|_{\mathcal H_\gamma}^2+| y |_{\mathcal H_\gamma }^2) \left(\frac {\varepsilon} {\delta^{1+2\alpha}} +\frac {\varepsilon^2} {\delta^{2(1+\alpha)}}  \right).	
\end{equation}
Based on the above estimates (\ref{Fsup1})--(\ref{J12est}), we can conclude that
\begin{equation}\label{J1res}
\hat{J}_1 \leq C_T(1+|x|_{\mathcal H_\gamma}^2+| y |_{\mathcal H_\gamma }^2) \left(\delta^{2\alpha}+\delta^{2(1-\alpha)}+\frac {\varepsilon} {\delta^{1+2\alpha}} +\frac {\varepsilon^2} {\delta^{2(1+\alpha)}}  \right).	
\end{equation}
For $\hat{J}_2$, we have
\begin{align*}
\hat{J}_2 & \leq C \mathbb E \left[ \sup_{s<t} |t-s|^{-2\alpha} \left| \int_s^t S_{t-r} \big[ F_1(X_{r(\delta)}^\varepsilon,\hat{Y}_r^\varepsilon)-\bar{F}_1(X_{r(\delta)}^\varepsilon) \big] \mathrm{d} r \right|_{\mathcal H_{\gamma-\alpha} }^2 \right]	 \nonumber\\
&~~~+ C \mathbb E \left[ \sup_{s<t} |t-s|^{-2\alpha} \left| \int_0^s (S_{t-s}-\mathrm{id}) S_{s-r}\big[ F_1(X_{r(\delta)}^\varepsilon,\hat{Y}_r^\varepsilon)-\bar{F}_1(X_{r(\delta)}^\varepsilon) \big] \mathrm{d} r \right|_{\mathcal H_{\gamma-\alpha} }^2 \right] \nonumber\\
&=: \hat{J}_{21}+\hat{J}_{22}.
\end{align*}
For $\hat{J}_{22}$, using (\ref{group2}) and estimate of $\hat{J}_1$ we have
\begin{align}\label{hatJ22}
	\hat{J}_{22}&=C \mathbb E \left[ \sup_{s<t} |t-s|^{-2\alpha} \left| (S_{t-s}-\mathrm{id}) \int_0^s S_{s-r} \big[ F_1(X_{r(\delta)}^\varepsilon,\hat{Y}_r^\varepsilon)-\bar{F}_1(X_{r(\delta)}^\varepsilon) \big] \mathrm{d} r \right|_{\mathcal H_{\gamma-\alpha} }^2 \right] \nonumber\\
	& \leq  C \mathbb E \left[ \sup_{s<t} |t-s|^{-2\alpha}  \left( |t-s|^\alpha \left| \int_0^s S_{s-r} \big[ F_1(X_{r(\delta)}^\varepsilon,\hat{Y}_r^\varepsilon)-\bar{F}_1(X_{r(\delta)}^\varepsilon) \big] \mathrm{d} r \right|_{\mathcal H_\gamma } \right)^2 \right] \nonumber\\
	&\leq  C \mathbb E \left[ \sup_{s \in [0,T]}  \left| \int_0^s S_{s-r} \big[ F_1(X_{r(\delta)}^\varepsilon,\hat{Y}_r^\varepsilon)-\bar{F}_1(X_{r(\delta)}^\varepsilon) \big] \mathrm{d} r \right|_{\mathcal H_\gamma }^2 \right] \nonumber\\
&\leq C_T(1+|x|_{\mathcal H_\gamma}^2+| y |_{\mathcal H_\gamma }^2) \left(\delta^{2\alpha}+\delta^{2(1-\alpha)}+\frac {\varepsilon} {\delta^{1+2\alpha}} +\frac {\varepsilon^2} {\delta^{2(1+\alpha)}}  \right).
\end{align}
For $\hat{J}_{21}$, we have
\begin{align*}
	\hat{J}_{21}
	&\leq C \mathbb E \left[ \sup_{s<t} |t-s|^{-2\alpha}
	\left| \int_s^t \big( S_{t-r}-S_{t-r(\delta)} \big) \big[F_1(X_{r(\delta)}^\varepsilon,\hat{Y}_r^\varepsilon)-\bar{F}_1(X_{r(\delta)}^\varepsilon) \big] \mathrm{d} r \right|_{\mathcal H_{\gamma-\alpha} }^2 \right] \nonumber\\
	&~~~+ C\mathbb E \left[ \sup_{s<t} |t-s|^{-2\alpha}
	\left| \int_s^t  S_{t-r(\delta)}  \big[F_1(X_{r(\delta)}^\varepsilon,\hat{Y}_r^\varepsilon)-\bar{F}_1(X_{r(\delta)}^\varepsilon) \big] \mathrm{d} r \right|_{\mathcal H_{\gamma-\alpha} }^2 \right] \nonumber\\
	&=: \hat{J}_{211}+\hat{J}_{212}.
\end{align*}
For $\hat{J}_{211}$, by H\"{o}lder's inequality, (\ref{group1}), (\ref{group2}), (\ref{F1Bound}) and $\|\bar{F}_1\|_{\infty,\mathcal H_{\gamma-\alpha}} \leq \|F_1\|_{\infty,\mathcal H_{\gamma-\alpha}}$ we obtain
\begin{align}\label{K2eta1}
	\hat{J}_{211} & \leq C\mathbb E \left [ \sup_{s<t} |t-s|^{-2\alpha}
	 \left( \int_s^t  \left| S_{t-r}(\mathrm{id}-S_{r-r(\delta)})  \big[F_1(X_{r(\delta)}^\varepsilon,\hat{Y}_r^\varepsilon)-\bar{F}_1(X_{r(\delta)}^\varepsilon) \big] \right|_{\mathcal H_{\gamma-\alpha} } \mathrm{d} r \right)^2  \right ] \nonumber\\
	&\leq C \mathbb E \left [ \sup_{s<t} |t-s|^{-2\alpha} \left(
	\int_s^t |t-r|^{-\alpha}  \left| (S_{r-r(\delta)}-\mathrm{id}) \big[F_1(X_{r(\delta)}^\varepsilon,\hat{Y}_r^\varepsilon)-\bar{F}_1(X_{r(\delta)}^\varepsilon) \big] \right|_{\mathcal H_{\gamma-2\alpha} } \mathrm{d} r \right)^2 \right ] \nonumber\\
	&\leq C \mathbb E \left [ \sup_{s<t} |t-s|^{-2\alpha}
	\left( \int_s^t |t-r|^{-\alpha}  |r-r(\delta)|^\alpha  \big| F_1(X_{r(\delta)}^\varepsilon,\hat{Y}_r^\varepsilon)-\bar{F}_1(X_{r(\delta)}^\varepsilon) \big|_{\mathcal H_{\gamma-\alpha} } \mathrm{d} r \right)^2 \right ] \nonumber\\
	&\leq C \delta^{2\alpha} \mathbb E \left [ \left( \|F_1\|_{\infty, \mathcal H_{\gamma-\alpha}}^2+ \|\bar{F}_1\|_{\infty, \mathcal H_{\gamma-\alpha}}^2 \right)
	\sup_{s<t} |t-s|^{-2\alpha}
	\left(\int_s^t |t-r|^{-\alpha}  \mathrm{d} r\right)^2  \right ] \nonumber\\
	&\leq C T^{2(1-2\alpha)} \delta^{2\alpha}.
\end{align}
For $\hat{J}_{212}$, we note that
\begin{align*}
\hat{J}_{212}&\leq C\mathbb E \left [ \sup_{s<t} |t-s|^{-2\alpha}
	\left| \int_s^t  S_{t-r(\delta)}  \big[F_1(X_{r(\delta)}^\varepsilon,\hat{Y}_r^\varepsilon)-\bar{F}_1(X_{r(\delta)}^\varepsilon) \big]\mathrm{d} r \right|_{\mathcal H_{\gamma-\alpha} }^2 1_{\{|t-s|\leq \delta \}}\right ] \nonumber\\
	&~~~+ C\mathbb E \left [ \sup_{s<t} |t-s|^{-2\alpha}
	\left| \int_s^t  S_{t-r(\delta)}  \big[F_1(X_{r(\delta)}^\varepsilon,\hat{Y}_r^\varepsilon)-\bar{F}_1(X_{r(\delta)}^\varepsilon) \big]\mathrm{d} r \right|_{\mathcal H_{\gamma-\alpha} }^2 1_{ \{|t-s|>\delta \}} \right ] \nonumber\\
	&\leq C \mathbb E \left [ \sup_{s<t} |t-s|^{-2\alpha}
	\left| \int_s^t  S_{t-r(\delta)}  \big[F_1(X_{r(\delta)}^\varepsilon,\hat{Y}_r^\varepsilon)-\bar{F}_1(X_{r(\delta)}^\varepsilon) \big]\mathrm{d} r \right|_{\mathcal H_{\gamma-\alpha} }^2 1_{\{|t-s|\leq \delta \}}\right ] \nonumber\\
	&~~~+ C \mathbb E \left [ \sup_{s<t} |t-s|^{-2\alpha}
	\left| \int_s^{([s/\delta]+1)\delta}  S_{t-r(\delta)}  \big[F_1(X_{r(\delta)}^\varepsilon,\hat{Y}_r^\varepsilon)-\bar{F}_1(X_{r(\delta)}^\varepsilon) \big]\mathrm{d} r \right|_{\mathcal H_{\gamma-\alpha} }^2 1_{ \{|t-s|>\delta \}} \right ] \nonumber\\
	&~~~+ C \mathbb E \left [ \sup_{s<t} |t-s|^{-2\alpha}
	\left| \int_{t(\delta)}^t  S_{t-r(\delta)}  \big[F_1(X_{r(\delta)}^\varepsilon,\hat{Y}_r^\varepsilon)-\bar{F}_1(X_{r(\delta)}^\varepsilon) \big]\mathrm{d} r \right|_{\mathcal H_{\gamma-\alpha} }^2 1_{ \{|t-s|>\delta \}} \right ] \nonumber\\
	&~~~+ C \mathbb E \left[ \sup_{s<t} |t-s|^{-2\alpha}
	\left| \sum_{k=[s/\delta]+1}^{[t/\delta]-1} \int_{k\delta}^{(k+1)\delta} S_{t-k\delta} \big[F_1(X_{k\delta}^\varepsilon,\hat{Y}_r^\varepsilon)-\bar{F}_1(X_{k\delta}^\varepsilon) \big]\mathrm{d} r \right|_{\mathcal H_{\gamma-\alpha} }^2 1_{ \{|t-s|>\delta \}} \right ] \nonumber\\
	&=:\hat{K}_1+\hat{K}_2+\hat{K}_3+\hat{K}_4.
\end{align*}
Firstly, using (\ref{F1Bound}) and $\|\bar{F}_1\|_{\infty,\mathcal H_{\gamma-\alpha}} \leq \|F_1\|_{\infty,\mathcal H_{\gamma-\alpha}}$, the term $\hat{K}_1$ is controlled by
\begin{align}\label{hatK1}
	\hat{K}_1 &\leq  C \mathbb E \left[ \sup_{s<t} |t-s|^{-2\alpha}
	\left( \int_s^t \left| S_{t-r(\delta)}  \big[ F_1(X_{r(\delta)}^\varepsilon,\hat{Y}_r^\varepsilon)-\bar{F}_1(X_{r(\delta)}^\varepsilon) \big] \right |_{\mathcal H_{\gamma-\alpha} } \mathrm{d} r  \right)^2  1_{\{|t-s|\leq \delta \}} \right] \nonumber\\
	&\leq  C \mathbb E \left[ \sup_{s<t} |t-s|^{-2\alpha}
	\left( \int_s^t \big| F_1(X_{r(\delta)}^\varepsilon,\hat{Y}_r^\varepsilon)-\bar{F}_1(X_{r(\delta)}^\varepsilon) \big|_{\mathcal H_{\gamma-\alpha} } \mathrm{d} r  \right)^2  1_{\{|t-s|\leq \delta \}} \right] \nonumber\\
	&\leq  C \mathbb E \left[ \left( \|F_1\|_{\infty, \mathcal H_{\gamma-\alpha}}^2+ \|\bar{F}_1\|_{\infty, \mathcal H_{\gamma-\alpha}}^2 \right)   \sup_{s<t} |t-s|^{2(1-\alpha)}  1_{\{|t-s|\leq \delta \}} \right]
	\leq C \delta^{2(1-\alpha)} .	
\end{align}
Secondly, using the boundedness of $F_1$ and $\bar{F}_1$ we have
\begin{align}\label{hatK2}
	\hat{K}_2 & \leq  C \mathbb E \left[ \sup_{s<t} \delta^{-2\alpha}
	\left( \int_s^{([s/\delta]+1)\delta}  \left| S_{t-r(\delta)}  \big[F_1(X_{r(\delta)}^\varepsilon,\hat{Y}_r^\varepsilon)-\bar{F}_1(X_{r(\delta)}^\varepsilon) \big] \right|_{\mathcal H_{\gamma-\alpha} } \mathrm{d} r \right)^2 1_{ \{|t-s|>\delta \}} \right] \nonumber\\
	& \leq C \mathbb E \left[ \sup_{s<t} \delta^{-2\alpha}
	\left( \int_s^{([s/\delta]+1)\delta}    \big|F_1(X_{r(\delta)}^\varepsilon,\hat{Y}_r^\varepsilon)-\bar{F}_1(X_{r(\delta)}^\varepsilon) \big|_{\mathcal H_{\gamma-\alpha} } \mathrm{d} r \right)^2 1_{ \{|t-s|>\delta \}} \right] \nonumber\\
	& \leq C \delta^{2(1-\alpha)}  \left(\|F_1\|_{\infty, \mathcal H_{\gamma-\alpha}}^2+ \|\bar{F}_1\|_{\infty, \mathcal H_{\gamma-\alpha}}^2\right)
	\leq C \delta^{2(1-\alpha)}.
\end{align}
For $\hat{K}_3$, similar to the estimation of $\hat{K}_2$, we have
\begin{equation}\label{hatK3}
\hat{K}_3 \leq C \delta^{2(1-\alpha)}.
\end{equation}
Finally, we estimate the term $\hat{K}_4$.
\begin{align}\label{Psi}
\hat{K}_4
	&\leq C \delta^{-2\alpha}\mathbb E\Bigg[\sup_{s<t} \left(\Big [\frac t\delta \Big]-\Big [\frac s\delta \Big ]-1 \right) \nonumber\\
	&~~~~~~~~~~~~~~\times \sum_{k=[s/\delta]+1}^{[t/\delta]-1} \left|  \int_{k\delta}^{(k+1)\delta} S_{t-k\delta} \big[F_1(X_{k\delta}^\varepsilon,\hat{Y}_r^\varepsilon)-\bar{F}_1(X_{k\delta}^\varepsilon) \big]\mathrm{d} r \right|_{\mathcal H_{\gamma-\alpha} }^2 1_{ \{|t-s|>\delta \}} \Bigg ] \nonumber\\
	&\leq  \frac {C_T}{\delta^{1+2\alpha}} \mathbb E \left [\sum_{k=0}^{[T/\delta]-1} \left|  \int_{k\delta}^{(k+1)\delta} S_{t-k\delta} \big [F_1(X_{k\delta}^\varepsilon,\hat{Y}_r^\varepsilon)-\bar{F}_1(X_{k\delta}^\varepsilon) \big]\mathrm{d} r \right|_{\mathcal H_{\gamma-\alpha} }^2 1_{ \{|t-s|>\delta \}} \right ] \nonumber\\
	&\leq \frac {C_T}{\delta^{2(1+\alpha)}} \max_{0\leq k\leq[T/\delta]-1}\mathbb E \left| S_{t-k\delta} \int_{k\delta}^{(k+1)\delta}  F_1(X_{k\delta}^\varepsilon,\hat{Y}_r^\varepsilon)-\bar{F}_1(X_{k\delta}^\varepsilon)  \mathrm{d} r \right|_{\mathcal H_{\gamma-\alpha} }^2  \nonumber\\
	&\leq \frac {C_T}{\delta^{2(1+\alpha)}} \max_{0\leq k\leq[T/\delta]-1}\mathbb E\left| \int_{k\delta}^{(k+1)\delta} F_1(X_{k\delta}^\varepsilon,\hat{Y}_r^\varepsilon)-\bar{F}_1(X_{k\delta}^\varepsilon)  \mathrm{d} r \right|_{\mathcal H_{\gamma-\alpha} }^2 \nonumber\\
	&\leq \frac {C_T\varepsilon^2}{\delta^{2(1+\alpha)}} \max_{0\leq k\leq[T/\delta]-1}
	\mathbb E \left|\int_0^{\frac \delta\varepsilon} F_1(X_{k\delta}^\varepsilon,\hat{Y}_{r\varepsilon+k\delta}^\varepsilon)-\bar{F}_1(X_{k\delta}^\varepsilon) \mathrm{d} r \right|_{\mathcal H_{\gamma-\alpha} }^2  \nonumber\\
	&\leq \frac {C_T\varepsilon^2}{\delta^{2(1+\alpha)}} \max_{0\leq k\leq[T/\delta]-1}
	\int_0^{\frac \delta\varepsilon}\int_r^{\frac \delta\varepsilon}\Psi_k(s,r) \mathrm{d} s \mathrm{d} r \nonumber\\
& \leq C_T (1+|x|_{\mathcal H_\gamma}^2+| y |_{\mathcal H_\gamma }^2) \left(\frac {\varepsilon} {\delta^{1+2\alpha}} +\frac {\varepsilon^2} {\delta^{2(1+\alpha)}}  \right),
\end{align}
where $\Psi_k(s,r)$ is defined as in (\ref{phi}). From estimates (\ref{hatJ22})--(\ref{Psi}), it follows that
\begin{equation}\label{hatJ2}
\hat{J}_2\leq C_T(1+|x|_{\mathcal H_\gamma}^2+| y |_{\mathcal H_\gamma }^2) \left(\delta^{2\alpha}+\delta^{2(1-\alpha)}+\frac {\varepsilon} {\delta^{1+2\alpha}} +\frac {\varepsilon^2} {\delta^{2(1+\alpha)}}  \right).
\end{equation}
For $\hat{J}_3$, an estimate similar to that of $\hat{J}_2$ yields
\begin{equation}\label{hatJ3}
\hat{J}_3\leq C_T(1+|x|_{\mathcal H_\gamma}^2+| y |_{\mathcal H_\gamma }^2) \left(\delta^{2\alpha}+\delta^{2(1-\alpha)}+\delta^{2(1-2\alpha)}+\frac {\varepsilon} {\delta^{1+2\alpha}} +\frac {\varepsilon^2} {\delta^{2(1+\alpha)}}+\frac {\varepsilon} {\delta^{1+4\alpha}} +\frac {\varepsilon^2} {\delta^{2(1+2\alpha)}} \right).
\end{equation}
Therefore, according to (\ref{J1res}), (\ref{hatJ2}) and (\ref{hatJ3}), we obtain
\begin{align}\label{dif3}
&\mathbb E \left\| \displaystyle\int_0^\cdot S_{\cdot-s} \big[F_1(X_{s(\delta)}^\varepsilon,\hat{Y}_s^\varepsilon)-\bar{F}_1(X_{s(\delta)}^\varepsilon) \big]\mathrm{d} s,0 \right\|_{\mathcal{D}_{B,\gamma}^{2\alpha}}^2 \nonumber\\
\leq & C_T(1+|x|_{\mathcal H_\gamma}^2+| y |_{\mathcal H_\gamma }^2) \left(\delta^{2\alpha}+\delta^{2(1-\alpha)}+\delta^{2(1-2\alpha)}+\frac {\varepsilon} {\delta^{1+2\alpha}} +\frac {\varepsilon^2} {\delta^{2(1+\alpha)}}+\frac {\varepsilon} {\delta^{1+4\alpha}} +\frac {\varepsilon^2} {\delta^{2(1+2\alpha)}} \right).
\end{align}

\textbf{Step4:} Finally, substituting (\ref{dif1}), (\ref{dif2}) and (\ref{dif3}) into (\ref{EXdif}) and letting $\delta:=\varepsilon^{\frac 1{2(1+2\alpha)}}$, we obtain
$$\lim_{\varepsilon \to 0} \mathbb E \big[\|X^\varepsilon-\bar{X}\|_{\infty, \mathcal H_\gamma}^2 \big]=0.$$
The proof is completed. \hfill $\square$

\section{Appendix}\label{appendix}

\appendix
\renewcommand{\thelemma}{A.\arabic{lemma}} 
\setcounter{lemma}{0}
\renewcommand{\theequation}{A.\arabic{equation}} 
\setcounter{equation}{0} 

This Appendix serves two purposes: First, to provide a proof of Lemma \ref{stab}, and second, to derive an estimate for $\Phi_k(s,r)$, which is used in the proof of Theorem \ref{MainRes}.

~~~~~~~~~~~~~~~~~~~~\\
\textbf{Proof of Lemma \ref{stab}}.	
By (\ref{norm}) we have
\begin{align*}
	&\left\|\int_0^\cdot S_{\cdot-s} Y_s \mathrm{d} \mathbf X_s, Y \right\|_{\mathcal{D}_{X,\gamma+\sigma}^{2\alpha}} \nonumber\\
	=&\left\|\int_0^\cdot S_{\cdot-s} Y_s \mathrm{d} \mathbf X_s \right\|_{\infty, \mathcal{H}_{\gamma+\sigma}}+\|Y\|_{\infty,\mathcal H_{\gamma+\sigma-\alpha}^{d}}
	+|Y|_{\alpha,\mathcal H_{\gamma+\sigma-2\alpha}^{d}} \nonumber\\
	&+ \left|R^{\int_0^\cdot S_{\cdot-s} Y_s \mathrm{d} \mathbf X_s} \right|_{\alpha,\mathcal H_{\gamma+\sigma-\alpha}}
	+\left|R^{\int_0^\cdot S_{\cdot-s} Y_s \mathrm{d} \mathbf X_s} \right|_{2\alpha,\mathcal H_{\gamma+\sigma-2\alpha}} .
\end{align*}
For $\|Y\|_{\infty,\mathcal H_{\gamma+\sigma-\alpha}^{d}}$, by  $\mathcal H_{\gamma}^{d} \hookrightarrow \mathcal H_{\gamma+\sigma-\alpha}^{d} $ we have
\begin{align}\label{Ysup}
	\|Y\|_{\infty,\mathcal H_{\gamma+\sigma-\alpha}^{d}}
	\leq |Y_0|_{\mathcal H_\gamma^d}+T^{\alpha-\sigma} |Y|_{\alpha-\sigma,\mathcal H_{\gamma+\sigma-\alpha}^{d}}.
\end{align}
For $|Y|_{\alpha-\sigma,\mathcal H_{\gamma+\sigma-\alpha}^{d}}$,
using the interpolation inequality (\ref{interp}) and Young's inequality we obtain
\begin{align}\label{Galpsig}
	&|Y|_{\alpha-\sigma,\mathcal H_{\gamma+\sigma-\alpha}^{d}}
	=\sup_{0\leq s<t \leq T}  \frac{\left|Y_t-Y_s \right|_{\mathcal H_{\gamma+\sigma-\alpha}^{d} }}{|t-s|^{\alpha-\sigma}} \nonumber\\
	\lesssim &  \|Y\|_{\infty, \mathcal H_{\gamma}^{d}}^{\frac \sigma \alpha} |Y|_{\alpha, \mathcal H_{\gamma-\alpha}^{d}}^{\frac {\alpha-\sigma} \alpha}
	\lesssim  \|Y\|_{\infty, \mathcal H_{\gamma}^{d}}+|Y|_{\alpha, \mathcal H_{\gamma-\alpha}^{d}},
\end{align}
where for the first inequality we used the following interpolation inequality: $$|Y_t-Y_s|_{\mathcal H_{\gamma+\sigma-\alpha}^{d} } \lesssim |Y_t-Y_s|_{\mathcal H_{\gamma}^{d}}^{\frac \sigma \alpha} |Y_t-Y_s|_{\mathcal H_{\gamma-\alpha}^{d}}^{\frac {\alpha-\sigma} \alpha}.$$
For $|Y|_{\alpha, \mathcal H_{\gamma-\alpha}^{d} }$, by (\ref{Y}) we have
\begin{align}\label{Yalp}
	|Y|_{\alpha, \mathcal H_{\gamma-\alpha}^{d}}
	\leq & |X|_\alpha \|Y^\prime\|_{\infty, \mathcal H_{\gamma-\alpha}^{d \times d} }	
	+|R^{Y} |_{\alpha, \mathcal H_{\gamma-\alpha}^{d} }.
\end{align}
Combined with estimates (\ref{Ysup})--(\ref{Yalp}) we have
\begin{equation}\label{Gsup1}
	\|Y\|_{\infty,\mathcal H_{\gamma+\sigma-\alpha}^{d}}
	\lesssim |Y_0|_{\mathcal H_\gamma^d}+T^{\alpha-\sigma} (1+\varrho_{\alpha}(\mathbf{X})) \|Y,Y^\prime\|_{\mathcal{D}_{X,\gamma}^{2\alpha}}.
\end{equation}

For $|Y|_{\alpha,\mathcal H_{\gamma+\sigma-2\alpha}^{d} }$,
following similar calculations as in (\ref{Yalp}) we have
\begin{equation*}
	|Y|_{\alpha,\mathcal H_{\gamma+\sigma-2\alpha}^{d}}
	\leq |X|_\alpha \|Y^\prime\|_{\infty, \mathcal H_{\gamma+\sigma-2\alpha}^{d \times d} }+|R^{Y}|_{\alpha, \mathcal H_{\gamma+\sigma-2\alpha}^{d}}.
\end{equation*}
Using the interpolation inequality (\ref{interp}) and $\mathcal H_{\gamma-\alpha}^{d \times d} \hookrightarrow \mathcal H_{\gamma+\sigma-2\alpha}^{d \times d}$ we have
\begin{align}\label{DGGsup}
	&\|Y^\prime\|_{\infty, \mathcal H_{\gamma+\sigma-2\alpha}^{d \times d}}
	\leq  |Y_0^\prime|_{\mathcal H_{\gamma+\sigma-2\alpha}^{d \times d}} + T^{\alpha-\sigma}  |Y^\prime |_{\alpha-\sigma,\mathcal H_{\gamma+\sigma-2\alpha}^{d \times d}}\nonumber\\
	\lesssim & |Y_0^\prime|_{\mathcal H_{\gamma-\alpha}^{d \times d}}+T^{\alpha-\sigma} \|Y^\prime\|_{\infty,\mathcal H_{\gamma-\alpha}^{d \times d} }^{\frac \sigma \alpha} |Y^\prime |_{\alpha,\mathcal H_{\gamma-2\alpha}^{d \times d} }^{\frac {\alpha-\sigma} \alpha},
\end{align}
and
\begin{equation*}
	|R^{Y}|_{\alpha, \mathcal H_{\gamma+\sigma-2\alpha}^{d}}
	\lesssim T^{\alpha-\sigma} |R^{Y}|_{\alpha, \mathcal H_{\gamma-\alpha}^{d} }^{\frac \sigma \alpha}
	|R^{Y}|_{2\alpha, \mathcal H_{\gamma-2\alpha}^{d} }^{\frac {\alpha-\sigma} \alpha}.
\end{equation*}
Thus, using Young's inequality we obtain
\begin{equation}\label{G1alp}
	|Y|_{\alpha,\mathcal H_{\gamma+\sigma-2\alpha}^{d}}
	\lesssim \varrho_{\alpha}(\mathbf{X})|Y_0^\prime|_{\mathcal H_{\gamma-\alpha}^{d \times d}}+T^{\alpha-\sigma}  (1+\varrho_{\alpha}(\mathbf{X})) \|Y,Y^\prime\|_{\mathcal{D}_{X,\gamma}^{2\alpha}}.
\end{equation}

For $ \left\|\int_0^\cdot S_{\cdot-s} Y_s \mathrm{d} \mathbf X_s \right\|_{\infty, \mathcal{H}_{\gamma+\sigma}}$, we note that
\begin{align*}
	\left| \int_0^t S_{t-s} Y_s \mathrm{d} \mathbf X_s \right|_{\mathcal{H}_{\gamma+\sigma}}
	\leq &	\left|\int_0^t S_{t-s} Y_s \mathrm{d} \mathbf X_s-S_t Y_0 X_{0,t}-S_t Y_0^\prime \mathbb{X}_{0,t} \right|_{\mathcal{H}_{\gamma+\sigma}} \nonumber\\
	&+|S_t Y_0 X_{0,t}|_{\mathcal{H}_{\gamma+\sigma}}+ |S_t Y_0^\prime \mathbb{X}_{0,t}|_{\mathcal{H}_{\gamma+\sigma}}.
\end{align*}
Using (\ref{IntBound}) we have
\begin{align*}
	&\left| \int_0^t S_{t-s} Y_s \mathrm{d} \mathbf X_s-S_t Y_0 X_{0,t}-S_t Y_0^\prime \mathbb{X}_{0,t} \right|_{\mathcal{H}_{\gamma+\sigma}} \\
	\lesssim & \varrho_{\alpha}(\mathbf{X}) \|Y,Y^\prime\|_{\mathcal{D}_{X,\gamma}^{2\alpha}}  t^{\alpha-\sigma}.
\end{align*}
By (\ref{group1}) we get
\begin{align*}
	|S_t Y_0 X_{0,t}|_{\mathcal{H}_{\gamma+\sigma}} \leq t^\alpha |X|_\alpha |S_t Y_0|_{\mathcal H_{\gamma+\sigma}^{d}}
	\lesssim t^{\alpha-\sigma} |X|_\alpha |Y_0|_{\mathcal H_{\gamma}^{d}}.
\end{align*}
Similarly, using (\ref{group1}) we have
\begin{align*}
	|S_t Y_0^\prime \mathbb{X}_{0,t}|_{\mathcal{H}_{\gamma+\sigma}} \leq  t^{2\alpha} |\mathbb{X}|_{2\alpha} |S_t Y_0^\prime|_{\mathcal H_{\gamma+\sigma}^{d \times d}}
	\lesssim  t^{\alpha-\sigma} |\mathbb{X}|_{2\alpha} |Y_0^\prime|_{\mathcal H_{\gamma-\alpha}^{d \times d}} .
\end{align*}
Therefore, we have
\begin{align}\label{IntGsup}
	\left\|\int_0^\cdot S_{\cdot-s} Y_s \mathrm{d} \mathbf X_s \right\|_{\infty, \mathcal{H}_{\gamma+\sigma}}
	\lesssim  T^{\alpha-\sigma}\varrho_{\alpha}(\mathbf{X}) \left(|Y_0|_{\mathcal H_\gamma^d}+|Y_0^\prime|_{\mathcal H_{\gamma-\alpha}^{d \times d}} \right)
	+T^{\alpha-\sigma} \varrho_{\alpha}(\mathbf{X}) \|Y,Y^\prime\|_{\mathcal{D}_{X,\gamma}^{2\alpha}}.	
\end{align}

For the remainder $\left|R^{\int_0^\cdot S_{\cdot-s} Y_s \mathrm{d} \mathbf X_s} \right|_{\theta,\mathcal H_{\gamma+\sigma-\theta}}$ with $\theta \in \{\alpha,2\alpha \}$, we note that
\begin{align*}
	R_{s,t}^{\int_0^\cdot S_{\cdot-s} Y_s \mathrm{d} \mathbf X_s}
	=& \int_0^t S_{t-r} Y_r \mathrm{d} \mathbf X_r-\int_0^s S_{s-r} Y_r \mathrm{d} \mathbf X_r-Y_s X_{s,t} \nonumber\\
	=& \int_s^t S_{t-r} Y_r \mathrm{d} \mathbf X_r -S_{t-s} Y_s X_{s,t} -S_{t-s} Y_s^\prime \mathbb{X}_{s,t} \nonumber\\
	&+ (S_{t-s}-\mathrm{id} ) \int_0^s S_{s-r} Y_r \mathrm{d} \mathbf X_r
	+(S_{t-s}-\mathrm{id} ) Y_s X_{s,t} \nonumber\\
	&+S_{t-s} Y_s^\prime \mathbb{X}_{s,t} \nonumber\\
	=:& \hat{L}_{s,t}^1+\hat{L}_{s,t}^2+\hat{L}_{s,t}^3+\hat{L}_{s,t}^4.
\end{align*}
For $\hat{L}_{s,t}^1$, using (\ref{IntBound}) we have
\begin{align}\label{I1}
	|\hat{L}_{s,t}^1|_{\mathcal H_{\gamma+\sigma-\theta}} \lesssim  \varrho_{\alpha}(\mathbf{X}) \|Y,Y^\prime\|_{\mathcal{D}_{X,\gamma}^{2\alpha}} |t-s|^{\alpha-\sigma+\theta}.
\end{align}	
For $\hat{L}_{s,t}^2$, (\ref{group2}) and (\ref{IntGsup}) imply that
\begin{align}\label{I2}
|\hat{L}_{s,t}^2|_{\mathcal H_{\gamma+\sigma-\theta}}=& \left| (S_{t-s}-\mathrm{id} ) \int_0^s S_{s-r} Y_r \mathrm{d} \mathbf X_r\right|_{\mathcal H_{\gamma+\sigma-\theta}}
 \lesssim  (t-s)^\theta \left|\int_0^s S_{s-r} Y_r \mathrm{d} \mathbf X_r \right|_{\mathcal{H}_{\gamma+\sigma}} \nonumber\\
	\lesssim & (t-s)^\theta \varrho_{\alpha}(\mathbf{X}) T^{\alpha-\sigma}\left(|Y_0|_{\mathcal H_\gamma^d}+|Y_0^\prime|_{\mathcal H_{\gamma-\alpha}^{d \times d}}+\|Y,Y^\prime\|_{\mathcal{D}_{X,\gamma}^{2\alpha}} \right).
\end{align}
For $\hat{L}_{s,t}^3$, by (\ref{group2}) we have
\begin{align}\label{I3}
	|\hat{L}_{s,t}^3|_{\mathcal H_{\gamma+\sigma-\theta}} \leq & |X|_\alpha |t-s|^\alpha |(S_{t-s}-\mathrm{id} ) Y_s|_{\mathcal H_{\gamma+\sigma-\theta}^{d}  }
	\lesssim |t-s|^{\theta+\alpha-\sigma} |X|_\alpha |Y_s|_{\mathcal H_{\gamma}^{d}} \nonumber\\
	\lesssim & |t-s|^{\theta+\alpha-\sigma} |X|_\alpha \|Y\|_{\infty,\mathcal H_{\gamma}^{d}}
	\lesssim    |t-s|^\theta  T^{\alpha-\sigma} \varrho_{\alpha}(\mathbf{X})  \|Y,Y^\prime\|_{\mathcal{D}_{X,\gamma}^{2\alpha}} .
\end{align}
For $\hat{L}_{s,t}^4$, according to (\ref{group1}), (\ref{DGGsup}) and Young's inequality, we obtain
\begin{align}\label{I4}
	|\hat{L}_{s,t}^4|_{\mathcal H_{\gamma+\sigma-\theta}} \lesssim & |\mathbb{X}|_{2\alpha} |t-s|^\theta |Y_s^\prime|_{\mathcal H_{\gamma+\sigma-2\alpha}^{d \times d}}
	\lesssim |t-s|^\theta |\mathbb{X}|_{2\alpha} \|Y^\prime \|_{\infty,\mathcal H_{\gamma+\sigma-2\alpha}^{d \times d}} \nonumber\\
	\lesssim & |t-s|^\theta |\mathbb{X}|_{2\alpha} \left(|Y_0^\prime|_{\mathcal H_{\gamma-\alpha}^{d \times d}}+T^{\alpha-\sigma}\|Y,Y^\prime\|_{\mathcal{D}_{X,\gamma}^{2\alpha}} \right).
\end{align}
Combining the estimates of (\ref{I1})--(\ref{I4}) we get
\begin{align}\label{Intrem}
	\left|R^{\int_0^\cdot S_{\cdot-s} Y_s \mathrm{d} \mathbf X_s} \right|_{\theta,\mathcal H_{\gamma+\sigma-\theta}}
	\lesssim \varrho_{\alpha}(\mathbf{X}) \left(|Y_0|_{\mathcal H_\gamma^d}+|Y_0^\prime|_{\mathcal H_{\gamma-\alpha}^{d \times d}} \right)
	+ \varrho_{\alpha}(\mathbf{X}) T^{\alpha-\sigma} \|Y,Y^\prime\|_{\mathcal{D}_{X,\gamma}^{2\alpha}}.
\end{align}
By using estimates (\ref{Gsup1}), (\ref{G1alp}), (\ref{IntGsup}) and (\ref{Intrem}),
we have
\begin{align}\label{BInt}
	&\left\|\int_0^\cdot S_{\cdot-s} Y_s \mathrm{d} \mathbf X_s, Y \right\|_{\mathcal{D}_{X,\gamma+\sigma}^{2\alpha}} \nonumber\\
	\lesssim & T^{\alpha-\sigma} \left(1+\varrho_{\alpha}(\mathbf{X}) \right) \|Y,Y^\prime\|_{\mathcal{D}_{X,\gamma}^{2\alpha}} +(1+\varrho_{\alpha}(\mathbf{X})) \left(|Y_0|_{\mathcal H_\gamma^d}+|Y_0^\prime|_{\mathcal H_{\gamma-\alpha}^{d \times d}} \right).
\end{align}
The proof is completed. 	 \hfill $\square$

\begin{lemma} \label{phiest}
Let $\Psi_k(s,r)$ be as given in (\ref{phi}). Then for $0 \leq r \leq s \leq \frac \delta\varepsilon$, one has
\begin{equation*}
\Psi_k(s,r) \leq  C(1+|x|_{\mathcal H_\gamma}^2+|y|_{\mathcal H_\gamma}^2)e^{-(\lambda_1-L_{F_2}-L_{G_2}^2)(s-r)}.
\end{equation*}	
\end{lemma}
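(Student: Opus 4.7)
The plan is to combine the Markov property of the auxiliary process $\hat{Y}^\varepsilon$ on the subinterval $[k\delta,(k+1)\delta]$ with the exponential ergodicity of the frozen equation provided by Lemma \ref{ergo}. First I would observe that on this subinterval, $\hat{Y}^\varepsilon$ satisfies an It\^o SPDE in which the slow variable is frozen at $X_{k\delta}^\varepsilon$, a quantity that depends only on $\mathcal{F}_{k\delta}$. Under the time rescaling $u\mapsto\varepsilon u+k\delta$, the rescaled process $\{\hat{Y}_{\varepsilon u+k\delta}^\varepsilon\}_{u\in[0,\delta/\varepsilon]}$ coincides in law, conditional on $\mathcal{F}_{k\delta}$, with the solution $\{Y_u^{x,\hat{Y}_{k\delta}^\varepsilon}\}_{u\geq 0}$ of the frozen equation (\ref{frozen}) with input $x=X_{k\delta}^\varepsilon$, driven by an appropriately rescaled Brownian motion independent of $\mathbf B$. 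The Markov property of the frozen semigroup $\{P_t^x\}_{t\geq 0}$ then yields, for any bounded measurable $\varphi:\mathcal{H}_\gamma\to\mathcal{H}_{\gamma-\alpha}$,
\[
\mathbb{E}\bigl[\varphi(\hat{Y}_{s\varepsilon+k\delta}^\varepsilon)\,\big|\,\mathcal{F}_{r\varepsilon+k\delta}\bigr]=\bigl(P_{s-r}^{x}\varphi\bigr)(y')\Big|_{x=X_{k\delta}^\varepsilon,\,y'=\hat{Y}_{r\varepsilon+k\delta}^\varepsilon}.
\]

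Next I would apply the tower property to $\Psi_k(s,r)$. Since $F_1(X_{k\delta}^\varepsilon,\hat{Y}_{r\varepsilon+k\delta}^\varepsilon)-\bar F_1(X_{k\delta}^\varepsilon)$ is $\mathcal{F}_{r\varepsilon+k\delta}$-measurable, it can be pulled out of the conditional expectation, giving
\[
\Psi_k(s,r)=\mathbb{E}\!\left[\bigl\langle\bigl(\widetilde{\mathbb{E}}\,F_1(x,Y_{s-r}^{x,y'})-\bar F_1(x)\bigr)\big|_{x=X_{k\delta}^\varepsilon,\,y'=\hat{Y}_{r\varepsilon+k\delta}^\varepsilon},\,F_1(X_{k\delta}^\varepsilon,\hat{Y}_{r\varepsilon+k\delta}^\varepsilon)-\bar F_1(X_{k\delta}^\varepsilon)\bigr\rangle_{\mathcal{H}_{\gamma-\alpha}}\right].
\]
Lemma \ref{ergo} applied pointwise to the frozen equation then provides the decisive exponential decay, namely
\[
\bigl|\widetilde{\mathbb{E}}\,F_1(x,Y_{s-r}^{x,y'})-\bar F_1(x)\bigr|_{\mathcal{H}_{\gamma-\alpha}}\leq C e^{-(\lambda_1-L_{F_2}-L_{G_2}^2)(s-r)}\bigl(1+|x|_{\mathcal{H}_\gamma}+|y'|_{\mathcal{H}_\gamma}\bigr).
\]

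At this point I would control the other factor by boundedness: $(\mathbf{H1})$ gives $\|F_1\|_{\infty,\mathcal{H}_{\gamma-\alpha}}<\infty$, whence $|\bar F_1(x)|_{\mathcal{H}_{\gamma-\alpha}}\leq\|F_1\|_{\infty,\mathcal{H}_{\gamma-\alpha}}$, so the second inner-product factor is bounded by a deterministic constant. Combining with Cauchy--Schwarz in $\mathcal{H}_{\gamma-\alpha}$ and taking expectation yields
\[
\Psi_k(s,r)\leq C e^{-(\lambda_1-L_{F_2}-L_{G_2}^2)(s-r)}\,\mathbb{E}\!\left[1+|X_{k\delta}^\varepsilon|_{\mathcal{H}_\gamma}+|\hat{Y}_{r\varepsilon+k\delta}^\varepsilon|_{\mathcal{H}_\gamma}\right].
\]
The expectation on the right is then handled via the a priori estimate $\|X^\varepsilon\|_{\infty,\mathcal{H}_\gamma}\leq C_{T,\varrho_\alpha(\mathbf B)}(1+|x|_{\mathcal{H}_\gamma})$ from Lemma \ref{slowest} together with $(\mathbf{H6})$ to take moments, and via $\sup_{t\in[0,T]}\mathbb{E}|\hat{Y}_t^\varepsilon|_{\mathcal{H}_\gamma}^2\leq C_T(1+|x|_{\mathcal{H}_\gamma}^2+|y|_{\mathcal{H}_\gamma}^2)$ from Lemma \ref{AuxiliaryEstimate}. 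After Jensen's inequality $1+a+b\leq\sqrt{3}\sqrt{1+a^2+b^2}$, this produces the claimed bound $\Psi_k(s,r)\leq C(1+|x|_{\mathcal{H}_\gamma}^2+|y|_{\mathcal{H}_\gamma}^2)e^{-(\lambda_1-L_{F_2}-L_{G_2}^2)(s-r)}$.

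The main delicate point will be justifying the Markov/freezing reduction rigorously: although $X_{k\delta}^\varepsilon$ couples the histories of both noises $\mathbf B$ and $W$ up to time $k\delta$, the crucial ingredient is that the increments of $W$ on $[k\delta,(k+1)\delta]$ are independent of $\mathcal{F}_{k\delta}$ and independent of $\mathbf B$, so that conditionally on $\mathcal{F}_{k\delta}$ the dynamics of $\hat{Y}_t^\varepsilon$ on this subinterval reduces, by a standard randomization argument, to an autonomous frozen SPDE with deterministic coefficients and random initial data. Once this reduction is in place, Lemma \ref{ergo} applies directly and the rest is an elementary moment computation.
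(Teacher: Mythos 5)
Your proposal is correct and follows essentially the same route as the paper: freeze the slow variable at $X_{k\delta}^\varepsilon$, identify the time-rescaled auxiliary process on $[k\delta,(k+1)\delta]$ in law with the frozen equation, invoke the exponential ergodicity of Lemma \ref{ergo}, and close with the moment bounds of Lemmas \ref{slowest} and \ref{AuxiliaryEstimate}. The only differences are organizational (you condition directly at time $r\varepsilon+k\delta$ via the Markov property of $\hat{Y}^\varepsilon$, whereas the paper conditions at $k\delta$ and then uses the Markov property of $Y^{x,y}$ on $\widetilde{\Omega}$, and you bound the second factor by $\|F_1\|_{\infty,\mathcal H_{\gamma-\alpha}}$ instead of the linear-growth bound combined with (\ref{frozenest})), which do not change the argument.
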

\begin{proof}
Recall that
\begin{equation*}
	\Phi_k(s,r)=\mathbb E \left[ \langle F_1(X_{k\delta}^\varepsilon,\hat{Y}_{s\varepsilon+k\delta}^\varepsilon)-\bar{F}_1(X_{k\delta}^\varepsilon),F_1(X_{k\delta}^\varepsilon,\hat{Y}_{r\varepsilon+k\delta}^\varepsilon)-\bar{F}_1(X_{k\delta}^\varepsilon) \rangle_{\mathcal H_{\gamma-\alpha}} \right].
\end{equation*}
For any $s>0$ and $\mathcal F_s$-measurable $\mathcal H_\gamma$-valued random variables $\xi$ and $Y$, we consider the following equation
$$
\left\{ \begin{aligned}
	\mathrm{d} Y_t&=\frac 1\varepsilon \big[L Y_t+F_2(\xi,Y_t)\big]\mathrm{d} t+\frac 1{\sqrt{\varepsilon}}G_2(\xi,Y_t)\mathrm{d} W_t, ~~t\geq s,\\
	Y_s&=Y,
\end{aligned} \right.
$$
which admits a unique solution $\{\widetilde{Y}^{\varepsilon,s,\xi,Y}_t\}_{t\geq0}$ under conditions (\ref{F2Lip}) and ($\mathbf{H3}$).
Looking back at the construction of the auxiliary process (\ref{Auxeq}), for each $k\in\mathbf N_0$ with $k \leq [T/\delta]-1$ and any $t\in[k\delta,(k+1)\delta]$, we have $\mathbb P$-a.s.,
\begin{center}
	$\hat{Y}^\varepsilon_t=\widetilde{Y}^{\varepsilon,k\delta,X^\varepsilon_{k\delta},\hat{Y}^\varepsilon_{k\delta}}_t$.
\end{center}
Therefore, $\Psi_k(s,r)$ can be rewritten as
\begin{align*}
	\Psi_k(s,r)
	=&\mathbb E\Big[ \big\langle F_1(X^\varepsilon_{k\delta},\widetilde{Y}^{\varepsilon,k\delta,X^\varepsilon_{k\delta}, \hat{Y}^\varepsilon_{k\delta}}_{s\varepsilon+k\delta})-\bar{F}_1(X^\varepsilon_{k\delta}),F_1(X^\varepsilon_{k\delta},\widetilde{Y}^{\varepsilon,k\delta,X^\varepsilon_{k\delta},
		\hat{Y}^\varepsilon_{k\delta}}_{r\varepsilon+k\delta})-
	\bar{F}_1(X^\varepsilon_{k\delta}) \big\rangle_{\mathcal H_{\gamma-\alpha}}\Big] \\
	=&\int_{\Omega}\mathbb E\Big[ \big\langle F_1(X^\varepsilon_{k\delta},\widetilde{Y}^{\varepsilon,k\delta,X^\varepsilon_{k\delta},
		\hat{Y}^\varepsilon_{k\delta}}_{s\varepsilon+k\delta})-\bar{F}_1(X^\varepsilon_{k\delta}), \\
	&~~~~~~~~~~F_1(X^\varepsilon_{k\delta},\widetilde{Y}^{\varepsilon,k\delta,X^\varepsilon_{k\delta},
		\hat{Y}^\varepsilon_{k\delta}}_{r\varepsilon+k\delta})-
	\bar{F}_1(X^\varepsilon_{k\delta}) \big \rangle_{\mathcal H_{\gamma-\alpha}}\Big|\mathcal F_{k\delta}\Big](\omega)\mathbb P(\mathrm{d} \omega)\\
	=&\int_{\Omega}\mathbb E\Big[ \big\langle F_1(X^\varepsilon_{k\delta}(\omega),\widetilde{Y}^{\varepsilon,k\delta,X^\varepsilon_{k\delta}(\omega),
		\hat{Y}^\varepsilon_{k\delta}(\omega)}_{s\varepsilon+k\delta})-
	\bar{F}_1(X^\varepsilon_{k\delta}(\omega)),\\
	&~~~~~~~~~~ F_1(X^\varepsilon_{k\delta}(\omega),\widetilde{Y}^{\varepsilon,k\delta,X^\varepsilon_{k\delta}(\omega),
		\hat{Y}^\varepsilon_{k\delta}(\omega)}_{r\varepsilon+k\delta})-
	\bar{F}_1(X^\varepsilon_{k\delta}(\omega)) \big\rangle_{\mathcal H_{\gamma-\alpha}}\Big]\mathbb P(\mathrm{d} \omega).
\end{align*}
Here, going from second to the third equation, we used the fact that $X^\varepsilon_{k\delta}$ and $\hat{Y}^\varepsilon_{k\delta}$ are $\mathcal F_{k\delta}$-measurable, and for any fixed $(x,y)\in \mathcal H_\gamma \times \mathcal H_\gamma$, $\{\widetilde{Y}^{\varepsilon,k\delta,x,y}_{s\varepsilon+k\delta}\}_{s\geq0}$ is independent of $\mathcal F_{k\delta}$.

According to the definition of process $\{\widetilde{Y}^{\varepsilon,k\delta,
	x,y}_t\}_{t\geq0}$, for each $k\in\mathbf N_0$, using a shift transformation, we have $\mathbb P$-a.s.,
\begin{align}\label{Step3equ}
	&\widetilde{Y}^{\varepsilon,k\delta,x,y}_{s\varepsilon+k\delta}\nonumber\\
	=&y+\frac 1\varepsilon \int_{k\delta}^{s\varepsilon+k\delta} L \widetilde{Y}^{\varepsilon,k\delta,x,y}_r \mathrm{d} r+\frac 1\varepsilon\int_{k\delta}^{s\varepsilon+k\delta}F_2(x,\widetilde{Y}^{\varepsilon,k\delta,x,y}_r)\mathrm{d} r+
	\frac 1{\sqrt{\varepsilon}}\int_{k\delta}^{s\varepsilon+k\delta}G_2(x,\widetilde{Y}^{\varepsilon,k\delta,x,y}_r)\mathrm{d}W_r\nonumber\\
	=&y+\frac 1\varepsilon\int_0^{s\varepsilon} L \widetilde{Y}^{\varepsilon,k\delta,x,y}_{r+k\delta} \mathrm{d} r+\frac 1\varepsilon\int_0^{s\varepsilon}F_2(x,\widetilde{Y}^{\varepsilon,k\delta,x,y}_{r+k\delta})\mathrm{d}r+\frac 1{\sqrt{\varepsilon}}\int_0^{s\varepsilon}G_2(x,\widetilde{Y}^{\varepsilon,k\delta,x,y}_{r+k\delta})\mathrm{d}W^{k\delta}_r\nonumber\\
	=&y+\int_0^s L \widetilde{Y}^{\varepsilon,k\delta,x,y}_{r\varepsilon+k\delta}\mathrm{d}r+\int_0^s F_2(x,\widetilde{Y}^{\varepsilon,k\delta,x,y}_{r\varepsilon+k\delta})\mathrm{d}r
	+\int_0^sG_2(x,\widetilde{Y}^{\varepsilon,k\delta,x,y}_{r\varepsilon+k\delta})\mathrm{d} \bar{W}^{k\delta}_r,
\end{align}
where $W^{k\delta}_r:=W_{r+k\delta}-W_{k\delta}$ and $\bar{W}^{k\delta}_r:=\frac 1{\sqrt{\varepsilon}}W^{k\delta}_{r\varepsilon}$.

From the definition of the solution to the frozen equation (\ref{frozen}),
it is easy to see that the uniqueness of the solutions to Eqs.~(\ref{Step3equ}) and (\ref{frozen}) implies that the distribution of $\{\widetilde{Y}^{\varepsilon,k\delta,x,y}_{s\varepsilon+k\delta}\}_{0\leq s\leq \frac \delta\varepsilon}$ coincides with the distribution of $\{Y^{x,y}_s\}_{0\leq s\leq \frac \delta\varepsilon}$. So we have
\begin{align*}
	\Psi_k(s,r)=&\int_\Omega \widetilde{\mathbb E} \Big[ \big\langle F_1(X^\varepsilon_{k\delta}(\omega),Y^{X^\varepsilon_{k\delta}(\omega),
		\hat{Y}^\varepsilon_{k\delta}(\omega)}_s)-\bar{F}_1(X^\varepsilon_{k\delta}(\omega)),\\
	&\qquad\quad F_1(X^\varepsilon_{k\delta}(\omega),Y^{X^\varepsilon_{k\delta}(\omega),
		\hat{Y}^\varepsilon_{k\delta}(\omega)}_r)-\bar{F}_1(X^\varepsilon_{k\delta}(\omega)) \big\rangle_{\mathcal H_{\gamma-\alpha}}\Big] \mathbb P(\mathrm{d} \omega)\\
	=&\int_\Omega\int_{\widetilde{\Omega}}\Big\langle \widetilde{\mathbb E} \Big[F_1(X^\varepsilon_{k\delta}(\omega),Y^{X^\varepsilon_{k\delta}(\omega),
		Y^{X^\varepsilon_{k\delta}(\omega),\hat{Y}^\varepsilon_{k\delta}(\omega)}_r(\omega')}_{s-r}-
	\bar{F}_1(X^\varepsilon_{k\delta}(\omega))\Big],\\
	&\qquad\qquad F_1(X^\varepsilon_{k\delta}(\omega),Y^{X^\varepsilon_{k\delta}(\omega),
		\hat{Y}^\varepsilon_{k\delta}(\omega)}_r(\omega'))-\bar{F}_1(X^\varepsilon_{k\delta}(\omega))\Big\rangle_{\mathcal H_{\gamma-\alpha}}
	\widetilde{\mathbb P}(\mathrm{d} \omega')\mathbb P(\mathrm{d} \omega),	
\end{align*}
where in the second equality, we used the Markov property of process $\{Y^{x,y}_t\}_{t\geq 0}$.
Using Lemma \ref{ergo} and estimate (\ref{frozenest}) we have
\begin{align*}
	\Psi_k(s,r)
	&\leq C\int_\Omega \int_{\widetilde{\Omega}}\Big\{\Big[1+|X^\varepsilon_{k\delta}(\omega)|_{\mathcal H_\gamma }+|Y^{X^\varepsilon_{k\delta}(\omega),
		\hat{Y}^\varepsilon_{k\delta}(\omega)}_r(\omega')|_{\mathcal H_\gamma }\Big]  e^{-(s-r)(\lambda_1-L_{F_2}-L_{G_2}^2)}  \nonumber\\
	&~~~\times\Big[1+|X^\varepsilon_{k\delta}(\omega)|_{\mathcal H_\gamma }+|Y^{X^\varepsilon_{k\delta}(\omega),
		\hat{Y}^\varepsilon_{k\delta}(\omega)}_r(\omega')|_{\mathcal H_\gamma }\Big]\Big\}\widetilde{\mathbb
		P}(\mathrm{d} \omega')\mathbb P(\mathrm{d} \omega)  \nonumber\\
	&\leq C\int_\Omega\Big[1+|X^\varepsilon_{k\delta}(\omega)|_{\mathcal H_\gamma }^2+|\hat{Y}^\varepsilon_{k\delta}(\omega)|_{\mathcal H_\gamma }^2\Big]\mathbb P(\mathrm{d} \omega) e^{-(s-r)(\lambda_1-L_{F_2}-L_{G_2}^2)}   \nonumber\\
	&\leq C\int_\Omega\Big[1+\|X^\varepsilon(\omega)\|_{\infty,\mathcal H_\gamma}^2+|\hat{Y}^\varepsilon_{k\delta}(\omega)|_{\mathcal H_\gamma }^2\Big]\mathbb P(\mathrm{d} \omega) e^{-(s-r)(\lambda_1-L_{F_2}-L_{G_2}^2)}   \nonumber\\
	&\leq C(1+|x|_{\mathcal H_\gamma}^2+| y |_{\mathcal H_\gamma }^2) e^{-(s-r)(\lambda_1-L_{F_2}-L_{G_2}^2)},
\end{align*}
where in the last inequality, we used (\ref{X2}) and (\ref{Auxest}).
\end{proof}

\section*{Acknowledgements}
Y. Xu was partially supported by Key International (Regional) Joint Research Program of NSFC under Grant No.12120101002.
M. Li and B. Pei were partially supported by National Natural Science Foundation of China (NSFC) under Grant No.12172285, Guangdong Basic and Applied Basic Research Foundation under Grant No. 2024A1515012164 and Fundamental
Research Funds for the Central Universities.

\end{document}